\DeclareMathOperator{\Gal}{Gal}
\DeclareMathOperator{\height}{h}
\newcommand{\C}{{\mathbb C}}
\newcommand{\K}{{\mathbb K}}
\renewcommand{\L}{{\mathbb L}}
\newcommand{\M}{{\mathbb M}}
\newcommand{\Q}{{\mathbb Q}}
\newcommand{\R}{{\mathbb R}}
\newcommand{\Z}{{\mathbb Z}}
\newcommand{\calA}{\mathcal{A}}
\newcommand{\DD}{\mathcal{D}}
\newcommand{\NN}{\mathcal{N}}
\newcommand{\OO}{\mathcal{O}}
\newcommand{\PP}{\mathcal{P}}
\newcommand{\gerp}{\mathfrak{p}}
\newcommand{\Gerp}{\mathfrak{P}}
\newcommand\ab{{\mathrm{ab}}}
\newcommand\const{{\mathrm{const}}}
\newcommand\tho{{\text{th}}}
\DeclareMathOperator\ord{ord}
\newtheorem{theorem}{Theorem}[section]
\newtheorem{proposition}[theorem]{Proposition}
\newtheorem{corollary}[theorem]{Corollary}
\newtheorem{lemma}[theorem]{Lemma}
\newtheorem{remark}[theorem]{Remark}
\newtheorem{question}[theorem]{Question}
\newtheorem{example}[theorem]{Example}
\numberwithin{equation}{section}
\title{Twisted rational zeros of\\ linear recurrence sequences}
\author{Yuri Bilu\\
IMB, Université de Bordeaux \& CNRS\\
E-mail: yuri@math.u-bordeaux.fr,
\and
Florian Luca\\
School of Maths, Wits, South Africa\\
CCM, UNAM, Morelia, Mexico\\
E-mail: Florian.Luca@wits.ac.za,
\and
Joris Nieuwveld and Jo\"el Ouaknine\\
Max Planck Institute for Software Systems, Germany\\
E-mail: jnieuwve@mpi-sws.org; joel@mpi-sws.org,
\and
James Worrell\\
Department of Computer Science, Oxford University, UK\\
Email: jbw@cs.ox.ac.uk
}
\date{version of \today}
\renewcommand*\l@section[2]{
	\ifnum \c@tocdepth >\z@
	\addpenalty\@secpenalty
	\addvspace{0.2em \@plus\p@}
	\setlength\@tempdima{1.5em}
	\begingroup
	\parindent \z@ \rightskip \@pnumwidth
	\parfillskip -\@pnumwidth
	\leavevmode \bfseries
	\advance\leftskip\@tempdima
	\hskip -\leftskip
	#1\nobreak\hfil \nobreak\hb@xt@\@pnumwidth{\hss #2}\par
	\endgroup
	\fi}
\begin{document}

\hfuzz5pt

	\maketitle

\begin{abstract}
 We introduce the notion of a twisted rational zero of a
 non-degenerate linear recurrence sequence (LRS\@). We show that any
 non-degenerate LRS has only finitely many such twisted rational zeros. In the particular case of the Tribonacci sequence, we show that $1/3$ and $-5/3$ are the only twisted rational zeros which are not integral zeros. 
\end{abstract}

	{\footnotesize
		
		\tableofcontents
		
	}

\section{Introduction}
Let~$\K$ be a field of characteristic~$0$. We fix an algebraic closure~$\overline{\K}$. 
By a \textit{$\K$-valued linear recurrence sequence} (LRS) of order~$m$ we mean a map ${U:\Z\to \K}$ such that for every ${n\in\Z}$ we have 
\begin{equation}
\label{erec}
U(n+m)=a_{m-1}U(n+m-1)+\cdots+ a_0U(n), 
\end{equation}
where ${a_0, \ldots ,a_{m-1}\in \K}$, with ${a_0\ne 0}$. Sometimes
instead of ``$\K$-valued LRS'' we will say ``LRS over~$\K$''.

\subsection{Twisted zeros and $p$-adic orders}

Our initial motivation was the work of Marques  and Lengyel~\cite{ML14}, who computed the $2$-adic order of the $n^\tho$ Tribonacci number $T(n)$. The \textit{Tribonacci numbers} is the $\Q$-valued LRS of order~$3$, defined by 
$$
T(0)=0, \quad T(1)=T(2)=1, \quad T(n+3)=T(n+2)+T(n+1)+T(n). 
$$
Marques  and Lengyel proved that 
\begin{equation}
\label{eml}
\nu_2(T(n))=
\begin{cases}
0, & \text{if  $n\equiv 1,2\pmod 4$};\\
1, & \text{if $n\equiv 3,11\pmod {16}$};\\
2, & \text{if $n\equiv 4,8\pmod {16}$};\\
3, &\text{if $n\equiv 7\pmod {16}$};\\
\nu_2(n)-1, & \text{if $n\equiv 0\pmod {16}$};\\
\nu_2(n+4)-1, &\text{if $n\equiv 12\pmod {16}$};\\
\nu_2(n+17)+1, &\text{if $n\equiv 15\pmod {32}$};\\
\nu_2(n+1)+1, &\text{if $n\equiv 31\pmod {32}$}.
\end{cases}
\end{equation}
They conjectured that similar formulas must hold for other primes, not just for ${p=2}$. This conjecture was refuted in~\cite{BLNOW23}, where it is shown that having formulas like~\eqref{eml} is exceptional rather than typical.

As one can see in~\eqref{eml}, on certain residue classes one has  formulas like ${\nu_2(T(n)) =\nu_2(n-a)+\const}$, where~$a$ is one of the numbers ${0,-1,-4,-17}$. This not surprising, because these numbers  are exactly the zeros of~$T$: for ${n\in \Z}$ we have
$$
T(n)=0  \quad \text{if and only if} \quad n\in \{0,-1,-4,-17\}. 
$$  
(For a proof see,  for instance,~\cite{MT91}, Example~2 on page~360; in that example~$u_n$  corresponds to our $T(-n)$.) 

Let~$U$ be a $\Q$-valued LRS; in particular, the coefficients ${a_0, \ldots, a_{m-1}}$ of the recurrence relation~\eqref{erec} belong to~$\Q$. 
We call~$p$ a \textit{regular prime} for~$U$ if it does not divide the denominators of the rational numbers ${a_0, \ldots, a_{m-1}}$, and the numerator of~$a_0$. In other words, ${a_0, \ldots, a_{m-1}}$ are $p$-adic integers and~$a_0$ is a $p$-adic unit. 

The following theorem can be easily proved,  using $p$-adic analysis, see  Section~\ref{sorder}. 

\begin{theorem}
\label{theasy}
Let~$a$ be a zero of a $\Q$-valued LRS~$U$, and~$p$ a regular prime for~$U$.  Then there exist a positive integer~$Q$, a positive integer~$\kappa$ and an integer~$\tau$  such that 
\begin{equation}
\label{econdaa}
\nu_p(U(n))= \kappa \nu_p(n-a)+\tau \qquad\text{when}\quad n\equiv a\pmod Q
\end{equation} 
\end{theorem}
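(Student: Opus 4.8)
The plan is to restrict $U$ to a suitable arithmetic progression through $a$, interpolate it $p$-adically by a convergent power series, and then apply Strassmann's theorem. First I would write $U$ in generalised power-sum form $U(n)=\sum_{i=1}^{r}P_i(n)\alpha_i^{n}$ for all $n\in\Z$, where the $\alpha_i\in\overline{\Q}$ are the distinct roots of the characteristic polynomial $X^{m}-a_{m-1}X^{m-1}-\cdots-a_0$ and $P_i\in\overline{\Q}[X]$; this is legitimate for negative $n$ as well, since $a_0\ne0$ forces the $\alpha_i$ to be nonzero. Fix a finite extension $\L$ of $\Q_p$ containing all the $\alpha_i$ and extend $\nu_p$ to $\L$ with $\nu_p(p)=1$. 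Because $p$ is a regular prime for $U$, the characteristic polynomial has coefficients in $\Z_p$ and a unit constant term, so every $\alpha_i$ is a unit of $\mathcal{O}_\L$; hence each residue $\bar\alpha_i$ lies in the finite multiplicative group of the residue field, and $\alpha_i^{Q}\equiv 1\pmod{\gerp}$ for a suitable common $Q$. Replacing $Q$ by $Qp^{t}$ with $t$ large, I can moreover arrange $\nu_p(\alpha_i^{Q}-1)>1/(p-1)$ for every $i$.

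With this $Q$ fixed, for $k\in\Z$ I rewrite $U(a+Qk)=\sum_i P_i(a+Qk)\,\alpha_i^{a}\,(\alpha_i^{Q})^{k}$ and expand $(\alpha_i^{Q})^{k}=\sum_{j\ge0}\binom{k}{j}(\alpha_i^{Q}-1)^{j}$. Writing $\binom{k}{j}=k(k-1)\cdots(k-j+1)/j!$ and using $\nu_p(j!)\le j/(p-1)$ together with $\nu_p(\alpha_i^{Q}-1)>1/(p-1)$, one sees that each of these series is a convergent power series in $k$ on all of $\Z_p$; multiplying by the polynomials $P_i(a+Qk)$ and summing the finitely many terms yields a single convergent power series, that is, a $p$-adic analytic function $g\colon\Z_p\to\L$ with $g(k)=U(a+Qk)$ for every $k\in\Z$. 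Since $g$ takes rational values on $\Z$, by continuity it takes values in $\Q_p$, so $\nu_p(g(z))\in\Z$ whenever it is finite.

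Now $a$ being a zero of $U$ gives $g(0)=0$. Assuming $U$ does not vanish identically on $a+Q\Z$ (which is automatic when $U$ is non-degenerate, and without which the statement is vacuous), $g$ is a nonzero convergent power series, so by Strassmann's theorem it has a zero of some finite order $\kappa\ge1$ at the origin; write $g(z)=z^{\kappa}h(z)$ with $h$ a convergent power series on $\Z_p$ and $h(0)\ne0$. By continuity of $h$ there is $s\ge0$ with $\nu_p(h(z))=\nu_p(h(0))$ for all $z\in p^{s}\Z_p$, whence $\nu_p(g(z))=\kappa\,\nu_p(z)+\nu_p(h(0))$ on $p^{s}\Z_p$. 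Translating back, if $n\equiv a\pmod{Qp^{s}}$ then $n=a+Qk$ with $k\in p^{s}\Z$, and therefore $\nu_p(U(n))=\kappa\,\nu_p(k)+\nu_p(h(0))=\kappa\bigl(\nu_p(n-a)-\nu_p(Q)\bigr)+\nu_p(h(0))$. Thus \eqref{econdaa} holds with this $\kappa$, with modulus $Qp^{s}$ in place of $Q$, and with $\tau=\nu_p(h(0))-\kappa\,\nu_p(Q)$, which is an integer since it equals $\nu_p(U(n))-\kappa\,\nu_p(n-a)$ for integers $n$ in the progression.

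The only genuine content is the interpolation step — selecting $Q$ so that the geometric-type series $(\alpha_i^{Q})^{k}$ converge as honest power series on the whole of $\Z_p$ rather than merely locally — and the appeal to Strassmann's theorem, which is exactly what guarantees that the zero of $g$ at the origin has finite order; everything after that is bookkeeping with valuations. The one subtlety to flag is the need for $g\not\equiv0$, which is why a non-vanishing hypothesis such as non-degeneracy is implicitly required.
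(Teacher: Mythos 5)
Your proposal is correct and rests on the same foundation as the paper's — Skolem's $p$-adic interpolation of $U$ along an arithmetic progression through~$a$ (the paper's Proposition~\ref{prsko} accomplishes what your choice of~$Q$ and binomial expansion of $(\alpha_i^Q)^k$ accomplishes). The difference is in how the two arguments finish. The paper invokes Theorem~\ref{thfunml} (imported from \cite{BLNOW23}), a general structure result describing $\nu_p(g(z))$ on \emph{every} residue class mod~$p^k$: on each class it is either constant or of the form $\kappa\nu_p(z-a_i)+\tau$. You instead give a direct, self-contained argument for the one residue class you care about: factor $g(z)=z^{\kappa}h(z)$ with $h(0)\ne0$, then use the ultrametric inequality (continuity of~$h$ at~$0$) to force $\nu_p(h(z))=\nu_p(h(0))$ on a small disk. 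For Theorem~\ref{theasy} alone, your route is leaner and avoids an external citation; the paper's route buys more, since Theorem~\ref{thfunml} also underpins the proofs of Theorems~\ref{thtwint} and~\ref{thtwrat}, so the authors pay the overhead once and reuse it. One small remark on your write-up: you correctly flag the need for $g\not\equiv0$ on the chosen progression; the paper handles this by assuming non-degeneracy implicitly (via Proposition~\ref{prsko}, whose second assertion requires it), so your caveat is the right one and matches what the paper tacitly assumes. Your bound $\nu_p(\alpha_i^Q-1)>1/(p-1)$ is exactly the condition needed for the interpolating series to converge on all of $\Z_p$ (it is the radius~$\rho$ in~\eqref{erho}), and the claim that $\tau\in\Z$ follows, as you say, by evaluating at integer points of the progression where $U(n)\ne0$.
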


Of course the converse also holds: if for some~$p$ there exist $Q,\kappa,\tau$ as above such that~\eqref{econdaa} holds,  then~$a$ is a zero of~$U$.



Now let us ask the following slightly more general question: what would happen if we take~$n$ in~\eqref{econdaa} not from the residue class of~$a$ modulo~$Q$, but from a different residue class? 
\begin{question}
\label{quaaprime}
Let~$p$ be a prime number. Assume that there exist 
\begin{equation*}
Q\in \Z_{>0}, \qquad a'\in \{0,1, \ldots, Q-1\}, \qquad \kappa \in \Z_{>0}, \qquad \tau \in \Z
\end{equation*}
such that 
\begin{equation*}
\nu_p(U(n))= \kappa \nu_p(n-a)+\tau \qquad\text{when}\quad n\equiv a'\pmod Q. 
\end{equation*} 
Assume further that  $\nu_p(n-a)$ is not bounded on the residue class of $a'$ modulo~$Q$. 
Does it imply that ${a'\equiv a\pmod Q}$ and ${U(a)=0}$? 
\end{question}
As the result of Marques  and Lengyel implies, it is indeed the case when ${U=T}$ and ${p=2}$. But is it true in general?

The answer is ``no'', as the following example shows.  

\begin{example}
\label{extwonplusone}
Consider ${U(n):=2^n+1}$ and let~$p$ be a prime number satisfying ${p\equiv \pm3\pmod 8}$. Then ${2^{(p-1)/2}+1\equiv0\pmod p}$. Define
${\tau:=\nu_p(2^{(p-1)/2}+1)}$. Then 
$$
\nu_p(U(n))=\nu_p(n)+\tau \quad \text{when} \quad n\equiv \frac{p-1}2\pmod {p-1}. 
$$
However, ${U(0)\ne 0}$, and in fact $U(n)$ does not vanish at all. 
\end{example}

The explanation is that~$0$ is, in fact, a kind of ``hidden'' zero of the LRS ${2^n+1}$. To give an exact definition, recall that a $\K$-valued LRS~$U$ satisfying~\eqref{erec} admits the \textit{Binet expansion}  
\begin{equation}
\label{ebinexp}
U(n)=f_1(n)\lambda_1^n + \cdots +f_s(n)\lambda_s^n\qquad (n\in \Z), 
\end{equation}
where ${\lambda_1, \ldots, \lambda_s}$ are the distinct roots of the characteristic polynomial 
$$
X^m-a_{m-1}X^{m-1}- \cdots -a_0, 
$$ 
and ${f_1, \ldots, f_s}$ are polynomials with  coefficients in the field $\K(\lambda_1, \ldots, \lambda_s)$. (Note that the roots $\lambda_i$ are non-zero, because ${a_0\ne 0}$.)  We call ${a\in \Z}$ a \textit{twisted zero} of the $\K$-valued LRS~$U$ if there exist roots of unity ${\xi_1, \ldots, \xi_s\in \overline{\K}}$ such that 
$$
\xi_1f_1(a)\lambda_1^a + \cdots +\xi_sf_s(a)\lambda_s^a=0. 
$$
For example,~$0$ is a twisted zero of the LRS with general term ${2^n+1^n}$, because 
$$
1\cdot 2^0+ (-1)\cdot1^0=0.
$$  

In Section~\ref{sorder} we will prove the following theorem, which gives a partial positive answer to Question~\ref{quaaprime}.  

\begin{theorem}
\label{thtwint}
Let~$U$ be a $\Q$-valued LRS, ${a\in \Z}$ and~$p$ a regular prime number for~$U$. 
Assume that there exists a  sequence of integers  $(n_k)$ satisfying
$$
\nu_p(U(n_k))\to +\infty, \qquad \nu_p(n_k-a) \to +\infty. 
$$
Then~$a$ is a twisted zero of~$U$. 
\end{theorem}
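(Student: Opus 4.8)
The plan is to interpolate~$U$ along a suitable arithmetic progression by a $p$-adic analytic function on~$\Z_p$ and then to evaluate that function at the $p$-adic limit of the~$n_k$. I would begin by fixing a number field~$\K'$ containing $\lambda_1,\dots,\lambda_s$ and all coefficients of $f_1,\dots,f_s$ (this is a number field since~$U$ is $\Q$-valued, so its characteristic polynomial lies in $\Q[X]$), together with a prime~$\Gerp$ of~$\K'$ above~$p$; let~$\OO$ denote the valuation ring of the completion~$\K'_\Gerp$, a finite extension of~$\Q_p$. Regularity of~$p$ makes the characteristic polynomial monic over~$\Z_p$ with unit constant term, so $\lambda_1,\dots,\lambda_s\in\OO^\ast$. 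Choose $e\in\Z_{>0}$ large enough that $\exp$ and $\log$ are mutually inverse, isometric group isomorphisms between $\Gerp^e$ and $1+\Gerp^e$, and let $L\in\Z_{>0}$ be a common multiple of the multiplicative orders of $\lambda_1,\dots,\lambda_s$ in the finite group $(\OO/\Gerp^e)^\ast$, so that $\lambda_i^L\in1+\Gerp^e$ for every~$i$.

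By the pigeonhole principle, infinitely many~$n_k$ lie in a single residue class $r\pmod L$; discarding the rest, write $n_k=r+Lt_k$ with $t_k\in\Z$. Since $\nu_p(n_k-a)\to\infty$, the~$n_k$ converge to~$a$ in~$\Z_p$, hence $(t_k)$ is Cauchy in~$\Z_p$ and converges to some $\alpha\in\Z_p$ satisfying $r+L\alpha=a$; in particular $\alpha=(a-r)/L$. Define
\[
G(t)=\sum_{i=1}^s f_i(r+Lt)\,\lambda_i^{\,r}\exp\bigl(t\log(\lambda_i^L)\bigr)\qquad(t\in\Z_p).
\]
Each $f_i(r+Lt)$ is a polynomial in~$t$ and each $\log(\lambda_i^L)$ lies in $\Gerp^e$, so~$G$ is $p$-adic analytic, hence continuous, on~$\Z_p$. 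For $t=t_k$ one has $\exp(t_k\log\lambda_i^L)=(\lambda_i^L)^{t_k}$, so $G(t_k)=\sum_i f_i(n_k)\lambda_i^{n_k}=U(n_k)$; by hypothesis $\nu_p(G(t_k))\to\infty$, whence $G(t_k)\to0$, and continuity forces $G(\alpha)=0$.

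It remains to read off the twisted-zero relation. Using $r+L\alpha=a$ we get $G(\alpha)=\sum_i f_i(a)\,\lambda_i^{\,r}\exp(\alpha\log\lambda_i^L)$. Since $L\alpha=a-r\in\Z$, a short computation with $\exp$ and $\log$ shows that $\exp(\alpha\log\lambda_i^L)$ and $\lambda_i^{\,a-r}$ have the same $L$-th power, namely $\lambda_i^{\,L(a-r)}$; hence $\xi_i:=\exp(\alpha\log\lambda_i^L)\,\lambda_i^{\,r-a}$ satisfies $\xi_i^L=1$ and so is a root of unity, lying in $\K'_\Gerp$ and therefore in~$\overline{\K}$. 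Substituting, $0=G(\alpha)=\sum_i\xi_i f_i(a)\lambda_i^{\,a}$. This is an identity between elements of the number field $\K'(\xi_1,\dots,\xi_s)$, which is contained both in $\K'_\Gerp$ (where we verified it) and in~$\overline{\K}$; hence it holds in~$\overline{\K}$, and~$a$ is a twisted zero of~$U$.

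I expect the main work to lie in the $p$-adic bookkeeping of the middle step --- choosing~$e$ and~$L$ so that the series $\exp(t\log\lambda_i^L)$ converge on all of~$\Z_p$ and the identity $\exp(t\log\lambda_i^L)=(\lambda_i^L)^t$ holds for $t\in\Z$. The one genuinely non-formal point is that the hypothesis $\nu_p(n_k-a)\to\infty$ pins the limit~$\alpha$ down to the exact value $(a-r)/L$, which is precisely what turns the analytic vanishing $G(\alpha)=0$ into a bona fide twisted vanishing of~$U$ at the integer~$a$.
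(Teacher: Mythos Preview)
Your argument is correct and follows essentially the same route as the paper: $p$-adic analytic interpolation of~$U$ along a residue class (the paper packages this as Proposition~\ref{prsko}, you rebuild it inline), pigeonhole to fix the class, continuity to force the interpolant to vanish at the limit $\alpha=(a-r)/L$, and then comparing $L$-th powers of $\exp(\alpha\log\lambda_i^L)$ and $\lambda_i^{\,a-r}$ to produce the roots of unity~$\xi_i$. The only cosmetic difference is that the paper works in the splitting field over~$\K_\gerp$ rather than fixing a global number field first, and states the key identity as $(h_i(b))^{AN}=(\lambda_i^a f_i(a))^{AN}$ to cover rational~$a$ in one stroke.
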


There is another phenomenon discovered in~\cite{BLNOW23}, again in the context of Tribonacci numbers. There exist infinitely many prime numbers~$p$ 
such that ${\nu_p(T(n))\ge \nu_p(n-1/3)}$ for  ${n\equiv1/3\pmod{p-1}}$, and the same holds true with $1/3$ replaced by $-5/3$;  see \cite[Theorem~1.5]{BLNOW23}. 
The reason is that $1/3$ and $-5/3$ can be viewed as ``rational zeros'' of the LRS~$T$, see \cite[Section~2]{BLNOW23}. 

Let us give the general definition. We call ${a\in \Q}$ a \textit{rational zero} of the $\K$-valued LRS~$U$ with Binet expansion~\eqref{ebinexp} if, for some definition of the rational powers ${\lambda_1^a, \ldots, \lambda_s^a\in \overline{\K}}$,  we have 
$$
f_1(a)\lambda_1^a + \cdots +f_s(a)\lambda_s^a=0. 
$$
We call~$a$ a \textit{twisted rational zero} (TRZ) of~$U$ if, for some definition of  ${\lambda_1^a, \ldots, \lambda_s^a}$ and some roots of unity ${\xi_1, \ldots, \xi_s}$,   we have 
$$
\xi_1f_1(a)\lambda_1^a + \cdots +\xi_sf_s(a)\lambda_s^a=0. 
$$

Theorem~\ref{thtwint} remains true assuming that ${a\in \Q}$.

\begin{theorem}
\label{thtwrat}
Let~$U$ be a $\Q$-valued LRS, ${a\in \Q}$ and~$p$ a regular prime number for~$U$. 
Assume that there exists a  sequence of integers  $(n_k)$ satisfying
\begin{equation}
\label{enk}
\nu_p(U(n_k))\to +\infty, \qquad \nu_p(n_k-a) \to +\infty. 
\end{equation}
Then~$a$ is a TRZ of~$U$. 
\end{theorem}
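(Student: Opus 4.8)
The plan is to reduce the rational case to the integer case already addressed by Theorem~\ref{thtwint}, by passing to a suitable LRS indexed by the integers. Write $a = b/d$ with $d\in\Z_{>0}$ and $\gcd(b,d)=1$. The hypothesis $\nu_p(n_k - a)\to+\infty$ forces, for large $k$, the relation $d n_k \equiv b \pmod{p^{N}}$ for growing $N$; in particular $p\nmid d$ (else $\nu_p(n_k-a)$ would be bounded below), and $n_k \equiv b d^{-1}\pmod{p^{N_k}}$ with $N_k\to\infty$. Fix a large modulus $Q$ (a power of~$p$ times whatever is needed to control roots of unity, as in the proof of Theorem~\ref{theasy}); by pigeonhole, infinitely many $n_k$ lie in a single residue class $c \pmod Q$, and along that subsequence $n_k = c + Q m_k$ with $\nu_p(m_k)\to+\infty$. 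The idea now is to introduce the $\overline{\K}$-valued LRS $V(m) := \sum_i f_i(c+Qm)\,\lambda_i^{c}\,(\lambda_i^{Q})^{m}$, which is a genuine LRS over the splitting field with characteristic roots $\lambda_i^{Q}$ and polynomial coefficients $f_i(c+Qm)$, and which satisfies $V(m_k)=U(n_k)$.

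The next step is to apply (the proof technique of) Theorem~\ref{thtwint} to $V$ at the integer argument $a^* := (a-c)/Q$, noting that $a^* \in \Z_p$ (since $p\nmid Q$ would be false — rather $Q$ is a $p$-power times a unit, so $a-c\in p\Z_p$ forces $a^*\in\Z_p$ after the unit part is absorbed; one arranges $c\equiv a \pmod{\text{unit part of }Q}$ too). Along the subsequence we have $\nu_p(V(m_k))\to+\infty$ and $\nu_p(m_k - a^*)\to+\infty$, so the $p$-adic analytic argument behind Theorem~\ref{thtwint} — expanding each $(\lambda_i^Q)^{m}$ as a $p$-adic analytic function of $m$ near $m=a^*$ via the binomial series, grouping roots $\lambda_i^Q$ that are equal after multiplication by roots of unity, and using that a nonzero $p$-adic analytic function has isolated zeros of finite order — yields that $\sum_i \xi_i f_i(c+Qa^*)\,\lambda_i^{c}\,(\lambda_i^{Q})^{a^*}=0$ for suitable roots of unity $\xi_i$. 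Since $c+Qa^* = a$ and $\lambda_i^{c}(\lambda_i^{Q})^{a^*}$ is, by construction, one admissible choice of the rational power $\lambda_i^{a}$, this is exactly the statement that $a$ is a TRZ of~$U$.

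The one genuinely new subtlety — and the step I expect to be the main obstacle — is bookkeeping the rational powers $\lambda_i^{a}$ consistently. Unlike the integer case, $\lambda_i^a$ is only defined up to a root of unity (an $e$th root of $\lambda_i^{b}$ where $e\mid d$), so one must check that the particular branch produced by the $p$-adic binomial expansion of $(\lambda_i^Q)^{m}$ around $m=a^*$ — which lives a priori in a $p$-adic field $\C_p$ — can be identified with an algebraic choice of $\lambda_i^a$ in $\overline{\K}$, after twisting by a root of unity. This is handled by the same device as in \cite{BLNOW23}: one works inside a number field $L$ containing the $\lambda_i$ and enough roots of unity, fixes an embedding $L\hookrightarrow \C_p$ compatible with~$\nu_p$, observes that the two candidate values of $\lambda_i^a$ (the algebraic one and the $p$-adic-analytic one) have the same $d$th power, hence differ by a root of unity, and absorbs that root of unity into $\xi_i$. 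The remaining ingredients — choosing $Q$ to kill the ambiguity coming from $\Gal$-conjugate roots that become proportional, and verifying regularity of~$p$ is preserved for~$V$ — are routine adaptations of Section~\ref{sorder}.
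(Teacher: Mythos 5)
Your proposal captures the same key ideas as the paper's proof of Theorem~\ref{thtwratk}: Skolem's $p$-adic analytic interpolation (Proposition~\ref{prsko}), evaluation of the interpolating function at the $p$-adic integer limit $a^*$ of the integers $m_k$, and identification of the resulting vanishing as a twisted rational zero by raising each term to a power killing the denominator of~$a$ and absorbing the resulting root of unity into $\xi_i$. Note, though, that the ``reduction to Theorem~\ref{thtwint}'' framing is somewhat illusory: since $a^*\in\Z_p$ is generally not a rational integer, you cannot invoke Theorem~\ref{thtwint} as a black box and must in any case rerun the $p$-adic analytic argument at $a^*$ --- which is precisely what the paper does by writing $g_\ell(z)=\sum_i\lambda_i^\ell f_i(\ell+Nz)\exp(z\log\lambda_i^N)$, observing $g_\ell(b)=0$ where $b=(a-\ell)/N$, and then comparing $AN$-th powers of $h_i(b)$ and $\lambda_i^a f_i(a)$; the cleanest way to see $a^*\in\Z_p$ is simply that it is a $p$-adic limit of the rational integers $m_k$, so the ``unit part of $Q$'' bookkeeping in your parenthetical is unnecessary, and the ``grouping roots $\lambda_i^Q$ that coincide after twisting'' step is a red herring that the paper does not need.
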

This theorem is proved in Section~\ref{sorder} as well. 

One may ask whether the converse is true; that is, if~$a$ is a TRZ, then there exists a sequence of integers $(n_k)$ satisfying~\eqref{enk}. 
Easy examples show that the answer is ``no'' in general. 
\begin{example}
\label{expminone}
If ${p\equiv -1\pmod 8}$ then
${\nu_p(2^n+1)=0}$ for all~$n$, though~$0$ is a twisted zero of the LRS  ${2^n+1^n}$. 
\end{example}

One may still hope that, when~$a$ is a TRZ,
this holds for infinitely many primes.


\begin{question}
\label{quhard}
Let~$a$ be a TRZ of a non-degenerate $\Q$-valued LRS~$U$. Are there  infinitely many prime numbers~$p$ with the following property: there exists a sequence of integers $(n_k)$ satisfying~\eqref{enk}? 
\end{question}

We show that the answer is ``yes''  for twisted \textbf{integral} zeros of  LRS of order~$2$; in fact, we will show that for them an analogue of Theorem~\ref{theasy} holds. 
However, we do not know the answer for rational zeros.  As for  LRS of higher order, the answer is, in general, ``no'' even for integral twisted zeros. See Section~\ref{squest} for the details.

\subsection{Finiteness}

Call a non-zero LRS with roots ${\lambda_1, \ldots, \lambda_s}$ \textit{non-degenerate} if $\lambda_k/\lambda_\ell$ is not a root of unity for ${k\ne \ell}$. The following  statement is the classical \textit{Skolem-Mahler-Lech Theorem}.
\begin{theorem}
[Skolem-Mahler-Lech]
\label{thsml}
A non-degenerate  linear recurrence sequence $U$ over a field of characteristic~$0$ has at most finitely many zeros:
$$
\#\{n\in \Z: U(n)=0\}<\infty.
$$
\end{theorem}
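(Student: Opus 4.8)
The plan is to carry out the classical $p$-adic method of Skolem, in the form due to Mahler and Lech: partition $\Z$ into finitely many arithmetic progressions on each of which $U$ is the restriction of a convergent $p$-adic power series, and then apply Strassmann's theorem.

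\emph{Step 1: reduction to a $p$-adic setting.} The roots $\lambda_1,\dots,\lambda_s$ together with the coefficients of the polynomials $f_1,\dots,f_s$ generate a subfield $\K_0$ of $\overline{\K}$ that is finitely generated over $\Q$, and all the values $U(n)$ lie in the finitely generated ring $R=\Z[\lambda_1^{\pm1},\dots,\lambda_s^{\pm1},\ \text{coeffs of the }f_i]$. I would invoke the standard specialization argument (as in Lech's proof): a finitely generated domain of characteristic~$0$ admits, for infinitely many primes~$p$, an injective ring homomorphism into the valuation ring $\OO_K$ of some finite extension $K/\Q_p$. Fixing such an embedding and replacing $R$ by its image, each $\lambda_i$ becomes a unit of $\OO_K$ (since $\lambda_i^{-1}\in R$), $U$ stays non-zero, and non-degeneracy is preserved: if the image of $\lambda_k/\lambda_\ell$ were a root of unity $\zeta$, then some power of $\lambda_k/\lambda_\ell$ would map to $1$ and hence, by injectivity, equal $1$. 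So it suffices to treat a non-degenerate LRS over $K$ all of whose characteristic roots are units of $\OO_K$.

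\emph{Step 2: a uniform period and the analytic continuation.} I would pick a positive integer $g$ with $\lambda_i^g\in 1+\gerp^e\OO_K$ for all $i$, where $\gerp$ is the maximal ideal of $\OO_K$ and $e$ is large enough that the binomial series $(1+x)^t=\sum_{k\ge0}\binom{t}{k}x^k$ converges for all $x\in\gerp^e\OO_K$ and $t\in\OO_K$ (any $e$ exceeding $e_K/(p-1)$ works, with $e_K$ the ramification index; enlarging $p$ or $g$ makes this costless). Then for each residue $r\in\{0,1,\dots,g-1\}$ and each $t\in\Z$,
\[
U(gt+r)=\sum_{i=1}^{s} f_i(gt+r)\,\lambda_i^{\,r}\,(\lambda_i^{g})^{t},
\]
and, read as a function of $t$, the right-hand side extends to a power series $h_r(t)=\sum_{k\ge0}c_k t^k$ converging on $\OO_K$: each summand is a polynomial in $t$ times the convergent series $(\lambda_i^g)^t=(1+(\lambda_i^g-1))^t$. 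Thus the zeros of $U$ in the residue class $r$ modulo $g$ are exactly the zeros of $h_r$ lying in $\Z\subseteq\OO_K$.

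\emph{Step 3: Strassmann and non-degeneracy.} By Strassmann's theorem, a non-zero such power series has only finitely many zeros in $\OO_K$, so each $h_r$ contributes finitely many zeros of $U$ unless $h_r\equiv 0$. I would rule out $h_r\equiv 0$ using non-degeneracy: the $\lambda_i^g$ are pairwise distinct (a coincidence $\lambda_i^g=\lambda_j^g$ would force $\lambda_i/\lambda_j$ to be a root of unity), so no non-trivial combination $\sum_i P_i(t)(\lambda_i^g)^t$ with polynomials $P_i$ vanishes identically (the standard vanishing lemma for exponential polynomials, e.g.\ via characteristic polynomials of linear recurrences). Hence $h_r\equiv0$ would force every $f_i(gt+r)\lambda_i^r$, and so every $f_i$, to be the zero polynomial, i.e.\ $U\equiv0$, contradicting that $U$ is non-zero. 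Summing the finitely many zeros over the $g$ residue classes gives $\#\{n\in\Z:U(n)=0\}<\infty$.

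\emph{Where the difficulty lies.} The substantive step is Step~1: securing a prime $p$ and an embedding of the coefficient ring into a $p$-adic field that turns the characteristic roots into units while keeping the sequence non-degenerate. Once one is in that setting, Steps~2--3 are a routine, if careful, application of $p$-adic analysis; the only genuine nuisance there is the convergence of $(1+x)^t$, handled by choosing $p$ (or the valuation of $\lambda_i^g-1$) large enough relative to the ramification.
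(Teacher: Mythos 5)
Your proof is correct and follows essentially the same approach as the paper's: Skolem's $p$-adic interpolation of the LRS on arithmetic progressions followed by Strassmann's theorem, with non-degeneracy ruling out identical vanishing of any interpolating branch (the paper uses $\exp(z\log\lambda_i^N)$ where you use the binomial series, an immaterial choice). The paper proves the $\overline{\Q}$-valued case via Proposition~\ref{prsko} and then merely cites the Lech--Cassels specialization theorem for the general characteristic-$0$ case, which is precisely the reduction your Step~1 spells out.
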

In Section~\ref{sfin} we prove that 
the same holds true for TRZs.

\begin{theorem}
\label{thfin}
Let~$U$ be a non-degenerate linear recurrence sequence  with values in a field of characteristic zero.  
Then~$U$ admits at most finitely many TRZs. 
\end{theorem}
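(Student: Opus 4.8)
The plan is to reduce the statement about TRZs to a finiteness statement about $S$-unit equations, mirroring the classical proof of Skolem--Mahler--Lech but keeping track of the extra freedom provided by the roots of unity $\xi_i$ and the choice of rational powers $\lambda_i^a$. Write the Binet expansion $U(n)=\sum_{i=1}^s f_i(n)\lambda_i^n$ over a number field (or finitely generated field) $\L$ containing all the $\lambda_i$ and the coefficients of the $f_i$. A rational number $a=u/v$ (in lowest terms) is a TRZ exactly when, after adjoining a $v$-th root $\mu_i$ of each $\lambda_i$ to a suitable extension $\L'$, there are roots of unity $\xi_i$ with $\sum_i \xi_i f_i(a)\mu_i^u=0$. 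The key observation is that changing the choice of $v$-th root $\mu_i$ only multiplies $\mu_i^u$ by a $v$-th root of unity, so it can be absorbed into $\xi_i$; hence the condition is really: there exist roots of unity $\eta_1,\dots,\eta_s\in\overline{\L}$ and a fixed choice of roots $\mu_i$ such that $\sum_i \eta_i f_i(a)\mu_i^u=0$, where $u$ ranges over a fixed residue class determined by $v$.

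Next I would bound the order of the roots of unity involved. This is where non-degeneracy enters. If a subsum $\sum_{i\in I}\eta_i f_i(a)\mu_i^u$ vanishes for some minimal subset $I$ with $|I|\geq 2$, divide through by one term to get an $S$-unit equation $\sum_{i\in I} x_i = 0$ in the variables $x_i = \eta_i f_i(a)\mu_i^u / (\eta_j f_j(a)\mu_j^u)$; by the theorem on $S$-unit equations (Evertse, van der Poorten--Schlickewei) there are only finitely many non-degenerate solutions up to scaling, which pins down the ratios $\mu_i^u/\mu_j^u$ — hence, modulo the fixed ambiguity, $(\lambda_i/\lambda_j)^{u}$ — to finitely many values, and the roots of unity $\eta_i/\eta_j$ to finitely many values as well. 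Combined with non-degeneracy of $U$ (so $\lambda_i/\lambda_j$ is not a root of unity, meaning $(\lambda_i/\lambda_j)^u$ determines $u$ up to finitely many possibilities once we also control the $v$-th-root ambiguity), this forces $u$ — and then, via $f_i(a)$ being polynomial in $a=u/v$ and $v$ being bounded, $a$ itself — into a finite set. The one-term case $\eta_i f_i(a)\mu_i^u=0$ forces $f_i(a)=0$, again finitely many $a$.

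The genuinely new difficulty, compared to the classical theorem, is handling the denominator $v$ of $a$: a priori $v$ is unbounded, and as $v$ grows the set of available roots of unity (the $v$-th roots of the $\lambda_i$, and the $\xi_i$ themselves) grows too, so one cannot fix a single number field at the outset. The main step will therefore be to show that $v$ stays bounded. I expect this to follow by a height/specialization argument: the relation $\sum_i \xi_i f_i(a)\lambda_i^{u/v}=0$ implies, after raising to the $v$-th power and clearing, a polynomial identity that, together with the $S$-unit structure above, bounds the heights of the quantities $\lambda_i^{1/v}$ and hence (since $\lambda_i$ is fixed and non-zero and not a root of unity for the relevant ratios) bounds $v$. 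Once $v$ is bounded, everything takes place in one fixed finite extension $\L'$, the relevant roots of unity have bounded order, and the $S$-unit argument closes as above, yielding finiteness of the numerator $u$ and hence of $a$. Throughout, one must be slightly careful that "TRZ" allows independent sign/root-of-unity choices per term, but since each such choice is itself a bounded-order root of unity once $v$ is bounded, this only enlarges the finite ambiguity and does not affect the conclusion.
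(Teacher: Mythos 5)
There is a genuine gap, and it is exactly at the step you yourself identify as the crux: bounding the denominator $v$ of a TRZ $a=u/v$.

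Your proposed mechanism for this — "a height/specialization argument: the relation $\sum_i \xi_i f_i(a)\lambda_i^{u/v}=0$ implies, after raising to the $v$-th power, a polynomial identity that bounds the heights of $\lambda_i^{1/v}$ and hence bounds $v$" — goes in the wrong direction. Since $\height(\lambda_i^{1/v})=\height(\lambda_i)/v$, bounding that height from above does \emph{not} bound $v$; in fact it is automatically small for large $v$. You would need a lower bound on the height of whatever auxiliary quantity you produce, and the sketch does not supply one. Relatedly, the reduction to $S$-unit equations cannot be invoked before $v$ is bounded: the quantities $\mu_i^u/\mu_j^u$ and the roots of unity $\eta_i$ live in extensions $\L(\lambda_1^{1/v},\dots,\lambda_s^{1/v},\zeta_*)$ whose degree over $\L$ grows without bound with $v$, so there is no fixed field and no fixed finite set of places $S$ for which the finiteness theorem on $S$-unit equations applies. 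So the argument is circular: one needs $v$ bounded to set up the $S$-unit equation in a fixed field, but one is trying to use the $S$-unit equation to bound $v$.

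The paper's actual mechanism for bounding $v$ is algebraic rather than Diophantine, and it is worth noting because it is the genuinely non-obvious ingredient. After normalizing so that $\sum_{i<s}\bigl(-\xi_i f_i(a)/f_s(a)\bigr)(\lambda_i/\lambda_s)^a=1$ with no vanishing proper subsum, one observes that for any $\sigma$ fixing $\K(\zeta_n)$ (where $\alpha_i^n\in\K$) the character $\chi_i(\sigma)=\sigma(\alpha_i)/\alpha_i$ satisfies $\sum\alpha_i\chi_i=1$ identically on the Galois group; Artin's theorem on linear independence of characters, combined with the "no vanishing subsum" hypothesis, forces all $\chi_i$ to be trivial. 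Hence each $(\lambda_i/\lambda_s)^a$ already lies in a fixed cyclotomic extension $\K(\zeta_n)$, and the Chevalley--Bass theorem then shows that, up to a root of unity and raising to a \emph{fixed} power $\Lambda$ depending only on $[\K_{\mathrm{ab}}:\Q]$, it lies in $\K$ itself. The denominator bound $v\mid\Lambda\rho$ then drops out of an elementary degree computation via Kummer exponents (Proposition~\ref{proot}). Only after this bound is obtained does one get a fixed finite extension, at which point the finiteness of the roots of unity $\xi_i$ (via Dvornicich--Zannier, which plays the role you assigned to $S$-unit equations) and the final reduction to Skolem--Mahler--Lech go through. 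Your reformulation and your handling of the root-of-unity ambiguity are fine; but the missing idea is Laurent's Galois/Kummer argument, and without it the proof does not close.
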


The proof is a variation of the principal argument of Laurent's article~\cite{La84}. The main step  is bounding the denominators of  the TRZs; moreover, the bound is effective if~$\K$ is a number field. After the denominators are bounded, Theorem~\ref{thfin} can be reduced to the Skolem-Mahler-Lech theorem using the existing results about equations in roots of unity \cite{CJ76,DZ00,Ma65}.


The Skolem-Mahler-Lech Theorem is, in general, non-effective, and so is our Theorem~\ref{thfin}: while we bound effectively the denominators of the TRZs, we  cannot do the same for their numerators. However, the Skolem-Mahler-Lech Theorem can be made effective in many special cases, and so can be  Theorem~\ref{thfin}. To illustrate this, we prove (see Section~\ref{stribo}) the following.

\begin{theorem}
\label{thtribo}
The only TRZs of the Tribonacci sequence~$T$ are 
$$
0,-1,-4,-17, \frac13, -\frac53. 
$$
\end{theorem}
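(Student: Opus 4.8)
The plan is to combine the general finiteness machinery behind Theorem~\ref{thfin} with the explicit arithmetic of the Tribonacci sequence, following the two-stage pattern announced after Theorem~\ref{thfin}: first bound the denominators of the TRZs, then handle the remaining equation in roots of unity. Concretely, write the Binet expansion $T(n) = f_1\lambda_1^n + f_2\lambda_2^n + f_3\lambda_3^n$, where $\lambda_1$ is the real root of $X^3 - X^2 - X - 1$ and $\lambda_2,\lambda_3$ are the complex conjugate roots, and the $f_i$ are nonzero constants in $\K = \Q(\lambda_1,\lambda_2,\lambda_3)$. A rational power $\lambda_i^a$ for $a = u/v$ in lowest terms is a choice of $v$-th root of $\lambda_i^u$; twisting by roots of unity $\xi_i$ just replaces $\lambda_i^a$ by another such root, so a TRZ is an $a\in\Q$ for which, for some choice of $v$-th roots, $\sum_i \xi_i f_i \lambda_i^{a} = 0$. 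The first step is to show $v$ is bounded: raising the relation to a suitable power and using the Galois action (the three roots are permuted transitively, and $\lambda_1\lambda_2\lambda_3 = 1$, $\lambda_1+\lambda_2+\lambda_3=1$), one obtains height/valuation constraints forcing $v$ into a small explicit finite set. This is exactly the place where the argument of Laurent~\cite{La84} is invoked, and I expect it to reduce to $v\in\{1,3\}$ after a careful ramification analysis at the primes dividing $\mathrm{disc}(X^3-X^2-X-1) = -44$, together with the observation that $\lambda_1$ is not a perfect power in $\OO_\K$.

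Once the denominator is bounded, say $v\mid 3$, the problem becomes: find all $a\in\frac13\Z$ and all roots of unity $\xi_1,\xi_2,\xi_3$ with $\xi_1 f_1\lambda_1^{a} + \xi_2 f_2\lambda_2^{a} + \xi_3 f_3\lambda_3^{a} = 0$. Substituting $a = m/3$ and clearing, this is a three-term vanishing sum $A_1 + A_2 + A_3 = 0$ where the $A_i$ are of the form (unit times root of unity times a fixed power of a cube root of $\lambda_i$); dividing through by $A_3$ turns it into $\zeta_1 \mu^{m} + \zeta_2 \bar\mu^{m} + 1 = 0$ for an appropriate algebraic number $\mu = \lambda_1^{1/3}/\lambda_3^{1/3}$ and roots of unity $\zeta_1,\zeta_2$ (absorbing the constants $f_i$, which themselves are explicit). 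Because $|\lambda_1|>1>|\lambda_2|=|\lambda_3|$, the modulus of $\mu^m$ grows geometrically in $|m|$, so only finitely many $m$ can satisfy a relation with two terms of modulus $1$; a direct estimate on $|\mu|$ (roughly $|\mu| = (\lambda_1/|\lambda_2|)^{1/3} = \lambda_1^{1/2}$ since $|\lambda_2|^2 = \lambda_1^{-1}$) gives an explicit bound on $|m|$, after which one checks the finitely many surviving values of $m$ by hand — using, if needed, the structure of the group of roots of unity in $\K(\lambda_i^{1/3})$, which is again small and explicit. The integral case $v=1$ recovers the classical fact that $T$ vanishes exactly at $0,-1,-4,-17$, which I would quote from~\cite{MT91}; the new rational solutions $\frac13$ and $-\frac53$ must then emerge as the only survivors with $v = 3$, presumably matching the rational zeros already identified in~\cite[Section~2]{BLNOW23}.

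The main obstacle is the first stage — pinning down the possible denominators $v$ exactly, not merely up to a bound. Getting a finite bound on $v$ is routine from Laurent-type arguments, but showing that $v\in\{1,3\}$ and no other value (e.g.\ ruling out $v=2$, $v=11$, or $v=22$, the "dangerous" divisors coming from $\mathrm{disc} = -44$) requires a genuine computation: one must analyze the factorization of $\lambda_1$ in the ring of integers of the relevant Kummer extension $\K(\lambda_1^{1/v})$ and show that a vanishing twisted sum would force an incompatible congruence unless $v\mid 3$. I would carry this out prime-by-prime, exploiting that $2$ and $11$ are the only ramified primes and that the valuations $\nu_{\gerp}(f_i)$ are explicitly computable; the conjugation symmetry $\lambda_2 = \overline{\lambda_3}$ together with $\lambda_1 = (\lambda_2\lambda_3)^{-1}$ keeps the bookkeeping manageable. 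The second stage, by contrast, is essentially a "dominant root" estimate and should be short.
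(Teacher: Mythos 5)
Your Stage 1 (bounding the denominator) is in the right spirit and close to what the paper does, although the paper's route is cleaner than a prime-by-prime ramification analysis at $2$ and $11$: it uses Lemma~\ref{lstrange} to force the twist to be of the form $(1,\eta,\overline{\eta})$, then compares the relation with its image under an arbitrary $\sigma\in\Gal(\overline\Q/\K_1)$ to conclude that $\sigma(\lambda_1^{1/m})/\lambda_1^{1/m}\in\mu_3$, and finally invokes the fact $\varrho_{\K_1}(\lambda_1)=1$ (proved by a height argument via Smyth's theorem, not by ramification) together with Lemma~\ref{llang} to get $m=[\K_1(\lambda_1^{1/m}):\K_1]\in\{1,3\}$. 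Your approach could plausibly be made to work but would be more laborious. The more serious issue is Stage 2.

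Your ``dominant root'' estimate does not give a two-sided bound on $m$, and this is a genuine gap, not a detail. After dividing the relation $\xi_1\alpha_1\lambda_1^{a}+\xi_2\alpha_2\lambda_2^{a}+\xi_3\alpha_3\lambda_3^{a}=0$ by the real term, the two remaining summands are a complex-conjugate pair with $|\alpha_2/\alpha_1|=|\alpha_3/\alpha_1|$ and $|\lambda_2/\lambda_1|=|\lambda_3/\lambda_1|=\lambda_1^{-3/2}$, so they have equal moduli for every $a$. As $a\to-\infty$ (equivalently $m\to-\infty$ in your normalization) those two terms blow up together, but since they are conjugates their sum $2\,\mathrm{Re}(\,\cdot\,)$ can be $-\alpha_1\lambda_1^{a}$ for appropriately placed arguments; nothing elementary rules that out. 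Concretely, your equation $\zeta_1\mu^{m}+\zeta_2\bar\mu^{m}+1=0$ has the property that the first two terms always share the same modulus $|\mu|^{m}$, so ``the modulus of $\mu^m$ grows geometrically in $|m|$'' is false: it is monotone in $m$, not in $|m|$, and in the direction where the pair dominates, the pair can nearly cancel. This is exactly the well-known obstruction to an effective Skolem bound for a real cubic with one real and two conjugate complex roots. The paper does not attempt such an estimate. Instead, after showing (Proposition~\ref{prnoroots}, using that $\mu_{\L}=\{\pm1\}$ and that each $\lambda_i/\lambda_j$ is already a cube in $\L$) that the roots of unity can be normalized away so that $\sum_i\alpha_i\lambda_i^{a}=0$ and $\prod_i\lambda_i^a=1$, it applies the symmetric identity
$$X_1^3+X_2^3+X_3^3-3X_1X_2X_3=(X_1+X_2+X_3)(X_1+\zeta_3X_2+\overline{\zeta_3}X_3)(X_1+\overline{\zeta_3}X_2+\zeta_3X_3)$$
with $X_i=\alpha_i\lambda_i^{a}$ and $X_1X_2X_3=1/44$ to convert the TRZ condition on the fractional powers $\lambda_i^a$ into the statement $W(3a)=0$ for the honest integer-indexed order-$4$ LRS
$$W(n)=44\alpha_1^3\lambda_1^{n}+44\alpha_2^3\lambda_2^{n}+44\alpha_3^3\lambda_3^{n}-3, \qquad W(n+4)=2W(n+3)-W(n),$$
and then determines all zeros of $W$ (they are $-51,-12,-5,-3,0,1$) by running the Skolem Tool, whose underlying algorithm is the $p$-adic/Baker machinery of~\cite{BLNOPW22}. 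That last step is non-trivial and is precisely what is missing from your proposal; an archimedean ``dominant root'' argument cannot substitute for it here.
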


\section{Preliminaries}

In this section we collect some basic facts and conventions that will be used throughout the article, usually without special reference. 

\subsection{Fields}

The letter~$p$ denotes a prime number, and blackboard boldface letters $\K,\L,\M$ etc.\ denote (unless indicated otherwise) fields of characteristic~$0$. In particular, they can be number fields or local fields (finite extensions of~$\Q_p$). If~$\K$ is a number field and~$\gerp$ is a prime of~$\K$ then~$\K_\gerp$ denotes the $\gerp$-adic completion.

For every positive integer~$m$ we fix a primitive root of unity of order~$m$ and denote it~$\zeta_m$.  We denote by~$\mu_m$ the group of roots of unity of order~$m$. Given a field~$\K$, we denote by $\mu_\K$ the group of roots of unity belonging to~$\K$.

The following lemma, which is Theorem~9.1  in~\cite[Chapter~VI]{La02}, will be used in the article on several occasions. 

\begin{lemma}
\label{llang}
Let~$\K$ be a field of characteristic~$0$  and  ${\alpha\in \K^\times}$. Let~$m$ be a positive integer. Assume 
that 
\begin{align}
\label{efirstass}
&\text{for all  ${p\mid m}$ we have ${\alpha\notin \K^p}$}, \\
\label{esecondass}
&\text{when 
 ${4\mid m}$ we have ${\alpha\notin -4\K^4}$}. 
\end{align}
Then the polynomial ${X^m - \alpha}$ is irreducible in $\K[X]$. 
\end{lemma}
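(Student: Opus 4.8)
The plan is to prove irreducibility directly, without passing to a contrapositive: since $X^m-\alpha$ is monic of degree~$m$ and $\beta\in\overline{\K}$ with $\beta^m=\alpha$ is a root, it suffices to show $[\K(\beta):\K]=m$, for then $X^m-\alpha$ is the minimal polynomial of~$\beta$, hence irreducible. First I would reduce to prime powers. Write $m=p_1^{e_1}\cdots p_r^{e_r}$ and set $\beta_i:=\beta^{m/p_i^{e_i}}$, so $\beta_i^{p_i^{e_i}}=\alpha$. Since $\gcd(m/p_1^{e_1},\dots,m/p_r^{e_r})=1$, a Bézout relation expresses $\beta$ as a monomial in the~$\beta_i$, whence $\K(\beta)=\K(\beta_1,\dots,\beta_r)$. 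If each $X^{p_i^{e_i}}-\alpha$ is irreducible over~$\K$ then $[\K(\beta_i):\K]=p_i^{e_i}$, and as these degrees are pairwise coprime the compositum has degree $\prod_i p_i^{e_i}=m$ over~$\K$, as wanted. The hypotheses \eqref{efirstass}--\eqref{esecondass} descend to each factor: $\alpha\notin\K^{p_i}$ always holds, and if $p_i=2$ with $e_i\ge 2$ then $4\mid m$, so $\alpha\notin-4\K^4$. Thus it remains to treat $m=p^k$.

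\emph{Odd prime powers.} Let $p$ be odd with $\alpha\notin\K^p$; I would show $X^{p^k}-\alpha$ is irreducible by induction on~$k$. For $k=1$, if $X^p-\alpha=gh$ with $g$ monic of degree~$d$, $1\le d\le p-1$, then $g(0)$ equals $\pm\zeta_p^t\beta^d$ for some integer~$t$ (the roots being the $\zeta_p^j\beta$); choosing $u,v$ with $ud+vp=1$ and raising to suitable powers produces $\rho\in\K$ with $\rho^p=\pm\alpha$, and since $p$ is odd this gives $\alpha\in\K^p$, a contradiction. For the inductive step put $\gamma:=\beta^p$; by induction $[\K(\gamma):\K]=p^{k-1}$, so it suffices that $X^p-\gamma$ be irreducible over $\K(\gamma)$, i.e.\ $\gamma\notin\K(\gamma)^p$. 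If $\gamma=\delta^p$ with $\delta\in\K(\gamma)$ then, since $p^{k-1}$ is odd, $N_{\K(\gamma)/\K}(\gamma)=(-1)^{p^{k-1}}(-\alpha)=\alpha$, so $\alpha=N_{\K(\gamma)/\K}(\delta)^p\in\K^p$, again impossible. Hence $[\K(\beta):\K(\gamma)]=p$ and $[\K(\beta):\K]=p^k$.

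\emph{Powers of~$2$.} Here $\alpha\notin\K^2$, and $\alpha\notin-4\K^4$ once $k\ge 2$. Fix $\beta$ with $\beta^{2^k}=\alpha$, put $\alpha_i:=\beta^{2^{k-i}}$ and $\K_i:=\K(\alpha_i)$; this gives a tower $\K=\K_0\subseteq\K_1\subseteq\cdots\subseteq\K_k$ with $\alpha_i^2=\alpha_{i-1}$, each step of degree $1$ or~$2$, and $X^{2^k}-\alpha$ is irreducible iff $[\K_k:\K]=2^k$ iff $\alpha_{i-1}\notin\K_{i-1}^2$ for all~$i$. I would prove $[\K_i:\K]=2^i$ for all~$i$ by strong induction. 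The case $i=1$ is the hypothesis $\alpha\notin\K^2$. For $i=2$, writing a hypothetical square root of $\alpha_1$ in $\K_1=\K(\sqrt{\alpha})$ in the basis $\{1,\sqrt{\alpha}\}$ and comparing coefficients shows $\alpha_1\in\K_1^2$ precisely when $\alpha=-4c^4$ for some $c\in\K$ (the Sophie Germain obstruction $X^4+4c^4=(X^2+2cX+2c^2)(X^2-2cX+2c^2)$), excluded by hypothesis. For $i\ge 3$, assume $\alpha_{i-1}\in\K_{i-1}^2$; the same coefficient comparison over $\K_{i-2}$ yields $\alpha_{i-2}=-4c^4$ with $c\in\K_{i-2}$. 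Applying the nontrivial automorphism $\tau$ of $\K_{i-2}/\K_{i-3}$, which sends $\alpha_{i-2}\mapsto-\alpha_{i-2}$, gives also $\alpha_{i-2}=4\tau(c)^4$; dividing yields $(\tau(c)/c)^4=-1$, so $\zeta_8\in\K_{i-2}$, and multiplying yields $\alpha_{i-3}=\alpha_{i-2}^2=-16\,N_{\K_{i-2}/\K_{i-3}}(c)^4$, i.e.\ $-\alpha_{i-3}\in\K_{i-3}^4$. The condition $-\alpha_j\in\K_j^4$ is reproduced one level down by the same manoeuvre, so descending the tower I arrive at $-\alpha\in\K^4$ together with $\zeta_8\in\K_1=\K(\sqrt{\alpha})$. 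Writing $-\alpha=e^4$ with $e\in\K$: if $\sqrt{-1}\in\K$ then $\alpha=(e^2\sqrt{-1})^2\in\K^2$, contradiction; otherwise $\K(\sqrt{\alpha})=\K(\sqrt{-1})$, and $\zeta_8\in\K(\sqrt{-1})$ forces $\sqrt{2}\in\K(\sqrt{-1})$, hence $\sqrt{2}\in\K$ or $\sqrt{-2}\in\K$, and then $\alpha=-4(e/\sqrt{2})^4$ respectively $\alpha=-4(e\sqrt{-2}/2)^4$ lies in $-4\K^4$ — a contradiction in every case. This completes the $2$-power case, and with it the proof.

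The routine parts are the reduction, the odd-prime-power induction, and the base cases $i\le 2$ of the $2$-power case. I expect the main obstacle to be the inductive descent for $i\ge 3$: one must keep careful track of how the square classes of $\pm\alpha_j$, the stray power of~$2$, and the $2$-power roots of unity interact as one walks down the tower, and the final contradiction is not a single clean identity but the incompatibility of a conclusion of the shape $\zeta_8\in\K(\sqrt{\alpha})$ with one of the shape $-\alpha\in\K^4$, resolved by a short case analysis on which quadratic subfield of $\Q(\zeta_8)$ lies in~$\K$. (This lemma is Theorem~9.1 of~\cite[Chapter~VI]{La02}, so one could alternatively just cite it; the above records the structure of the argument.)
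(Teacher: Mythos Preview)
The paper does not prove this lemma at all: it simply quotes it as Theorem~9.1 in \cite[Chapter~VI]{La02} and moves on. Your proposal is a correct and complete reconstruction of the standard argument (reduction to prime powers, norm-based induction for odd~$p$, and the quadratic-tower descent with the Sophie Germain obstruction for~$p=2$), which is essentially Lang's own proof; you even flag this yourself in the closing parenthesis. So you have gone well beyond what the paper does, and there is nothing to correct --- for the purposes of matching the paper, the one-line citation you mention would already suffice.
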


\begin{remark}
\label{rlang}
If ${\sqrt{-1}\in \K}$ then assumption~\eqref{esecondass} can be omitted,   because in this case  ${-4\in \K^4}$ and~\eqref{esecondass} follows from~\eqref{efirstass}.  
\end{remark}

\subsection{Linear recurrence sequences} 
\label{ssgen}

Let~$U$ be an LRS with values in a field~$\K$. We call~$m$ the \textit{minimal order} of~$U$ if~$U$ admits a linear recurrence relation of order~$m$, but not of order strictly smaller than~$m$. By convention, the minimal order of the identically zero LRS is set to be~$0$. 

Let~$m$ be the minimal order of a (non-zero) LRS~$U$ with values in~$\K$. Then the coefficients ${a_0, \ldots, a_{m-1}}$ of the recurrence relation 
\begin{equation*}
U(n+m)=a_{m-1}U(n+m-1)+\cdots+ a_0U(n)
\end{equation*}
are well-defined and belong to the field~$\K$. Fix an algebraic closure~$\overline{\K}$, and let ${\lambda_1, \ldots, \lambda_s\in \overline{\K}}$ be the distinct roots of the characteristic polynomial 
\begin{equation}
\label{echarpol}
X^m-a_{m-1}X^{m-1}-\cdots-a_0.  
\end{equation}
Then~$U$ admits the Binet expansion 
\begin{equation*}
U(n)=f_1(n)\lambda_1^n + \cdots +f_s(n)\lambda_s^n,  
\end{equation*}
where ${f_1, \ldots, f_s}$ are polynomials with  coefficients in the field $\K(\lambda_1, \ldots, \lambda_s)$, such that the order of~$\lambda_i$ as a root of the characteristic polynomial~\eqref{echarpol} is equal to ${\deg f_i+1}$. In particular, the polynomials~$f_i$ are all non-zero, and 
$$
\sum_{i=1}^s(\deg f_i+1)=m. 
$$

Unless the contrary is stated explicitly, in this article, when referring to an LRS of order~$m$, we will assume that~$m$ is the minimal order of this LRS\@.

It is important to note the following: if~$U$ is non-degenerate then it does not vanish identically on any residue class; that is, for any positive integer~$N$ and any ${\ell\in \{0, \ldots, N-1\}}$, the function ${n\mapsto U(\ell+Nn)}$ is not identically zero. Indeed, assuming non-degeneracy of~$U$, the numbers $\lambda_1^N, \ldots, \lambda_s^N$ are all distinct. Hence 
$$
U(\ell+Nn)= \sum_{i=1}^s h_i(n)\theta_i^n, \quad \text{where}\quad h_i(T):=\lambda_i^\ell f_i(\ell+NT), \quad \theta_i:=\lambda_i^N.  
$$
This implies that $U(\ell+Nn)$ is an LRS of the same minimal order as~$U$; in particular, it is not identically zero.

\section{Twisted rational zeros and the $p$-adic order}
\label{sorder}

In this section we prove Theorems~\ref{theasy},~\ref{thtwint} and~\ref{thtwrat} from the Introduction. The proofs rely on Skolem's  $p$-adic interpolation of LRS, briefly recalled in Subsections~\ref{sspadic} and~\ref{ssfunction}.

\subsection{$p$-adic analytic functions}
\label{sspadic}
In this subsection we recall some very basic facts about $p$-adic analytic functions. Most of them are quite standard. All missing proofs, unless indicated otherwise, can be found in any standard text like~\cite{Go20} or~\cite{Sc06}.

Let~$p$ be a prime number. We fix an algebraic closure $\overline{\Q_p}$,  and extend the standard $p$-adic absolute value ${|\cdot|_p}$ to~$\overline{\Q_p}$, so that ${|p|_p=p^{-1}}$. We will also use the additive valuation $\nu_p$ defined by ${\nu_p(z)= - \log|z|_p/\log p}$ for ${z\in \overline{\Q_p}^\times}$, with the convention ${\nu_p(0)=+\infty}$. All algebraic extensions of~$\Q_p$ occurring below will be viewed as subfields of this fixed $\overline{\Q_p}$.

Let~$\K$ be a finite extension of~$\Q_p$. 
For ${a\in \K}$ and ${r>0}$ we denote $\DD(a,r)$ and $\overline\DD(a,r)$ (or $\DD_\K(a,r)$, $\overline{\DD_\K}(a,r)$, if we want to indicate that the disk is in the field~$\K$) the open and the closed disks with center~$a$ and radius~$r$:
$$
\DD(a,r)=\{z \in \K: |z-a|_p<r\}, \qquad \overline \DD(a,r)=\{z \in \K: |z-a|_p\le r\}.
$$ 
It might be worth noting that every open disk in~$\K$ is also closed, and any closed disk is open. That is, for every ${r>0}$ there exist ${r',r''>0}$ such that 
$$
\DD(a,r)=\overline{\DD}(a,r'), \qquad \overline{\DD}(a,r)=\DD(a,r''). 
$$
Another useful observation is that every point of a disk serves as its center: if ${b\in \DD(a,r)}$ then ${\DD(a,r)=\DD(b,r)}$, and similarly for the closed disks.

We denote by $\OO_\K$, or simply by~$\OO$ if this does not lead to a confusion, the ring of integers of~$\K$:
$$
\OO=\{z\in \K: |z|_p \le 1\} = \overline\DD(0,1).  
$$

Let~$D$ be a disk in~$\K$ (open or closed), and~$\L$ a finite extension of~$\K$. We call ${g:D\to \L}$  an analytic function if for some ${a\in D}$ we have 
\begin{equation}
\label{eanf}
g(z) =\sum_{n=0}^\infty \alpha_n (z-a)^n \qquad (z\in D), 
\end{equation}
where ${\alpha_0, \alpha_1, \alpha_2, \ldots \in \L}$. In particular, the infinite sum on the right converges for all ${z\in D}$. 

Here are some simple properties of analytic functions, to be used below without special reference. 

\begin{enumerate}
\item
The coefficients ${\alpha_0, \alpha_1, \alpha_2, \ldots}$ are well-defined as soon as~$g$ and~$a$ are given. In particular, if the coefficients are not all~$0$, then~$g$ is a non-zero function. 

\item
The analytic function~$g$ admits a power series expansion around any other ${b\in D}$. Specifically, 
for any ${b\in \OO}$ we have  
\begin{equation}
\label{equb}
g(z) = \sum_{k=0}^\infty \beta_k(z-b)^k, 
\end{equation} 
where 
$$ 
\beta_k=\frac{g^{(k)}(b)}{k!}= \sum_{n=k}^\infty \binom nk \alpha_n (b-a)^{n-k}.
$$

\item
An analytic function on~$D$ is bounded. Indeed, set 
$$
r:=\max\{|z-w|_p: z,w\in D\}.  
$$
Then ${D=\overline{\DD}(a,r)}$, and convergence in~\eqref{eanf} is equivalent to ${|\alpha_n|_pr^n\to0}$. In particular, the sequence $|\alpha_n|_pr^n$ is bounded. Hence $\alpha_n (z-a)^n$ is bounded uniformly in ${z\in D}$. It follows that~$g$ is bounded. 

\item
A non-zero analytic function on a disk~$D$ may have at most finitely many zeros in~$D$; this is because~$D$ is compact and the zeros are isolated. A quantitative version is given by the classical Theorem of Strassmann; see, for instance, \cite[Theorem~4.1]{Ca86}. 
\end{enumerate}

\begin{theorem}
\label{thfunml}
Let~$\K$ be a finite extension of~$\Q_p$ of ramification index~$e$, and let ${g:\Z_p\to \K}$ be an analytic function, not identically~$0$. Denote by~$\calA$ the (finite) set of zeros of~$g$.  
Then there exists a positive integer~$k$ such that  for every  ${i \in \{0,1,\ldots,  p^k-1\}}$  we have one of the following two options.
\begin{enumerate}
\item[(C)]
There exists ${\tau_i\in \Z}$ such that for ${z\in \Z_p}$  satisfying ${z\equiv i \pmod {p^k}}$ we have ${\nu_p(g(z)) =e^{-1}\tau_i}$; in other words, $\nu_p(g(z))$ is constant on the residue class ${z\equiv i\pmod  {p^k}}$. 

\item[(L)]
There exist 
$$
a_i\in \calA, \qquad \tau_i\in \Z, \qquad  \kappa_i \in \Z_{>0}
$$ 
such that ${a_i\equiv i\pmod{p^k}}$, and for   ${z\in \Z_p}$  satisfying ${z\equiv i \pmod {p^k}}$ we have 
$$
\nu_p(g(z)) =\kappa_i\nu_p(z-a_i)+e^{-1}\tau_i. 
$$
\end{enumerate}

\end{theorem}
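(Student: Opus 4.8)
\emph{Plan of proof.} The plan is to study $g$ locally near each of its finitely many zeros, to apply Strassmann's theorem on the disks disjoint from the zero set, and then to choose a single modulus $p^k$ fine enough to separate the zeros and to resolve every residue class at once. Write $\calA$ for the (finite) set of zeros of $g$ in $\Z_p$. For each $a\in\calA$, expand $g$ around $a$ as $g(z)=\sum_{n\ge0}\beta_n(z-a)^n$ with $\beta_n\in\K$; then $\beta_0=g(a)=0$, and since $g\not\equiv0$ the coefficients are not all zero, so there is a least index $\kappa_a\ge1$ with $\beta_{\kappa_a}\ne0$. Writing $g(z)=(z-a)^{\kappa_a}h_a(z)$ with $h_a(z):=\sum_{n\ge0}\beta_{n+\kappa_a}(z-a)^n$, the function $h_a$ is analytic on the same disk and $h_a(a)=\beta_{\kappa_a}\ne0$. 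By continuity of $h_a$ together with the ultrametric inequality, there is a positive integer $k_a$ such that $|h_a(z)-h_a(a)|_p<|h_a(a)|_p$, and hence $\nu_p(h_a(z))=\nu_p(\beta_{\kappa_a})$, whenever $|z-a|_p\le p^{-k_a}$. Putting $\tau_a:=e\,\nu_p(\beta_{\kappa_a})\in\Z$, this yields
$$
\nu_p(g(z))=\kappa_a\nu_p(z-a)+e^{-1}\tau_a\qquad\text{for all }z\in\Z_p\text{ with }z\equiv a\pmod{p^{k_a}},
$$
and in particular $a$ is the unique zero of $g$ in its own residue class modulo $p^{k_a}$.

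I would then set $k:=\max_{a\in\calA}k_a$ (and $k:=1$ if $\calA=\emptyset$), fix $i\in\{0,1,\dots,p^k-1\}$, and put $D_i:=\{z\in\Z_p:z\equiv i\pmod{p^k}\}$, a closed disk contained in $\Z_p$. If $D_i$ meets $\calA$, say $a\in\calA\cap D_i$, then $D_i$ is contained in the residue class of $a$ modulo $p^{k_a}$ because $k\ge k_a$, so the displayed formula shows that option (L) holds on $D_i$ with $a_i:=a$, which is the only zero of $g$ in $D_i$. If $D_i$ is disjoint from $\calA$, then $g$ has no zero in $D_i$; re-expanding $g$ around $i$ as $g(z)=\sum_{n\ge0}\gamma_n(z-i)^n$ and invoking Strassmann's theorem \cite[Theorem~4.1]{Ca86}, the number of zeros of $g$ in $D_i$ equals the largest index $n$ for which $|\gamma_n|_p\,p^{-kn}$ attains $\max_m|\gamma_m|_p\,p^{-km}$; since this count is $0$, we get $|\gamma_n|_p\,p^{-kn}<|\gamma_0|_p$ for every $n\ge1$, so $|g(z)|_p=|\gamma_0|_p=|g(i)|_p$ for all $z\in D_i$. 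As $g(i)\in\K^\times$, its valuation lies in $e^{-1}\Z$, so option (C) holds with $\tau_i:=e\,\nu_p(g(i))$. This would establish the theorem.

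The only substantial point is the local step near a zero: showing that $\nu_p(h_a(z))$ stabilizes to the constant $\nu_p(\beta_{\kappa_a})$ on a small enough disk around $a$, which is exactly where the ultrametric property (or, equivalently, Strassmann's theorem applied to the non-vanishing function $h_a$) is used. Everything else is bookkeeping needed to make a single modulus $p^k$ serve all residue classes and to separate the finitely many zeros.
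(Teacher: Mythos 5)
The paper does not give a proof of Theorem~\ref{thfunml} at all (it simply cites \cite[Theorem~3.2]{BLNOW23}), so there is nothing to compare your proof against; but your argument has a genuine gap in the treatment of option~(C). You invoke Strassmann's theorem in the form ``the number of zeros of~$g$ in $D_i$ \emph{equals} the largest index~$n$ for which $|\gamma_n|_p\,p^{-kn}$ attains $\max_m|\gamma_m|_p\,p^{-km}$,'' and from the vanishing of that count you deduce that the maximum is attained only at $n=0$. That is not what Strassmann gives. The theorem (as in Cassels, Theorem~4.1) says the number of $\Z_p$-zeros is \emph{at most} $N$; the equality version, via $p$-adic Weierstrass preparation, counts zeros in $\overline{\Q_p}$ (or in the ring of integers of a sufficiently ramified extension), not in $\Z_p$. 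If $g$ has $\overline{\Q_p}$-zeros lying outside $\Z_p$ but within distance $p^{-k}$ of some $i$, then $N\ge 1$ even though $D_i\cap\calA=\varnothing$, and your conclusion that $|g|_p$ is constant on $D_i$ fails.

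This is not a cosmetic issue: your choice $k:=\max_{a\in\calA}k_a$ (with $k:=1$ when $\calA=\varnothing$) is not always large enough. For a concrete counterexample take $p$ odd and $g(z):=(z-1)^2-p^3$. Here $g$ has no zeros in $\Z_p$ (its roots $1\pm p^{3/2}$ are irrational), so $\calA=\varnothing$ and your recipe gives $k=1$. But on $D_1=1+p\Z_p$ one has $\nu_p(g(z))=2$ when $\nu_p(z-1)=1$ and $\nu_p(g(z))=3$ when $\nu_p(z-1)\ge 2$; thus neither~(C) nor~(L) holds for $i=1$ with $k=1$. (The theorem is still true here --- one just needs $k\ge 2$.) To repair the proof you must either choose $k$ large enough that the disks $\overline{\DD_{\overline{\Q_p}}}(i,p^{-k})$ also avoid all $\overline{\Q_p}$-zeros of $g$ that do not lie in $\Z_p$, or argue directly that $\nu_p\circ g$ is locally constant on the compact set $\Z_p\setminus\bigcup_{a\in\calA}\DD(a,p^{-k_a})$ (it is continuous with values in the discrete set $e^{-1}\Z$, so locally constant, and by compactness a single modulus $p^k$ works). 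The local analysis near each $a\in\calA$ --- factoring $g=(z-a)^{\kappa_a}h_a$ and stabilizing $\nu_p(h_a)$ on a small disk --- is fine and gives option~(L) correctly.
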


\begin{proof}
This is Theorem~3.2 from~\cite{BLNOW23}. 
\end{proof}

We denote 
\begin{equation}
\label{erho}
\rho:=p^{-1/(p-1)}.
\end{equation} 
Let us recall the definition and the basic properties of the $p$-adic exponential and logarithmic function. 

\begin{enumerate}
\item
For ${z\in \DD(0,\rho)}$ we define  
$$
\exp(z) =\sum_{n=0}^\infty \frac{z^n}{n!}. 
$$
For ${z,w\in \DD(0,\rho)}$ we have
$$
|\exp(z)-1|_p=|z|_p, \quad \exp(z+w)=\exp(z)\exp(w), \quad \exp'(z)=\exp(z). 
$$
\item
For ${z\in \DD(1,1)}$ we define  
$$
\log(z) =\sum_{n=1}^\infty \frac{(-1)^{n-1}(z-1)^n}{n}. 
$$
For ${z,w\in \DD(1,1)}$ we have
$$
\log(zw)=\log(z)+\log(w), \quad \log'(z)=\frac1z. 
$$
\item
For ${z\in \DD(1,\rho)}$ we have 
$$
|\log(z)|_p=|z-1|_p, \quad \exp(\log(z)) =z. 
$$
\item
For ${z\in \DD(0,\rho)}$  we have 
${\log(\exp(z))=z}$. 
\end{enumerate}

\subsection{$p$-adic analytic interpolation of a linear recurrence sequence}
\label{ssfunction}
The contents of this subsection is very classical and goes back to Skolem. Still, we prefer to include some proofs for the reader's convenience.

Let~$U$ be a non-zero LRS of (minimal) order~$m$ with values in a number field~$\K$. We write its recurrence relation as 
\begin{equation*}
U(n+m)=a_{m-1}U(n+m-1)+\cdots+ a_0U(n), 
\end{equation*}
where ${a_0, \ldots, a_{m-1}\in \K}$. 


We call a prime~$\gerp$ of~$\K$  \textit{regular} for~$U$ if ${a_1, \ldots, a_m}$ are $\gerp$-adic integers and~$a_0$ is a $\gerp$-adic unit.

Let~$\gerp$ be a regular prime for~$U$, and ~$\K_\gerp$  the $\gerp$-adic completion of~$\K$. It is a finite extension of~$\Q_p$, where~$p$ is the rational prime number below~$\gerp$.


\begin{proposition}
[Skolem]
\label{prsko}
There exists a positive integer~$N$ and analytic functions 
$$
g_0, \ldots, g_{N-1}:\Z_p\to \OO_{\K_\gerp}
$$
such that  
\begin{equation}
\label{eextrap}
u(\ell+Nn)= g_\ell(n) \qquad (\ell\in \{0, \ldots,N-1\}, \quad n\in \Z).
\end{equation}
Moreover, if the LRS~$U$ is non-degenerate, then none of the  functions $g_\ell$ vanishes identically. 
\end{proposition}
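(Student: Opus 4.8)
The plan is to carry out Skolem's classical $p$-adic interpolation after passing to a finite extension of~$\K_\gerp$ large enough to contain the data of the Binet expansion. Fix a finite extension~$\L$ of~$\K_\gerp$ inside~$\overline{\Q_p}$ containing ${\lambda_1, \ldots, \lambda_s}$; then the coefficients of ${f_1, \ldots, f_s}$, which lie in ${\K(\lambda_1, \ldots, \lambda_s)}$, also lie in~$\L$. Denote by~$\mathfrak m$ the maximal ideal of~$\OO_\L$. Since~$\gerp$ is regular, ${a_0, \ldots, a_{m-1}}$ are $\gerp$-adic integers and~$a_0$ is a $\gerp$-adic unit; as each~$\lambda_i$ is a root of the monic polynomial~\eqref{echarpol}, we get ${\lambda_i\in \OO_\L}$, and from ${\pm a_0=\prod_i \lambda_i^{\deg f_i+1}}$ together with ${\nu_p(a_0)=0}$ and ${\nu_p(\lambda_i)\ge0}$ we conclude ${\nu_p(\lambda_i)=0}$, i.e.\ every~$\lambda_i$ is a unit of~$\OO_\L$.

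Next I would choose~$N$. Let~$q$ be the cardinality of the residue field ${\OO_\L/\mathfrak m}$. For every~$i$ we have ${\lambda_i^{q-1}\in 1+\mathfrak m}$, say ${\lambda_i^{q-1}=1+\pi_i}$ with ${\nu_p(\pi_i)>0}$; expanding ${(1+\pi_i)^{p^t}}$ by the binomial theorem and using ${\nu_p\binom{p^t}{k}=t-\nu_p(k)}$ for ${1\le k\le p^t}$, one checks that ${\nu_p(\lambda_i^{(q-1)p^t}-1)\to+\infty}$ as ${t\to\infty}$, uniformly over the finitely many indices~$i$. Hence, taking ${N:=(q-1)p^t}$ with~$t$ large enough, we have ${\lambda_i^N\in \DD(1,\rho)}$ for all~$i$. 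Put ${c_i:=\log(\lambda_i^N)}$; then ${c_i\in \DD(0,\rho)}$, ${\exp(c_i)=\lambda_i^N}$, and ${\exp(nc_i)=(\lambda_i^N)^n}$ for all ${n\in \Z}$.

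For ${\ell\in \{0, \ldots, N-1\}}$ I would then set
$$
g_\ell(z):=\sum_{i=1}^s \lambda_i^\ell f_i(\ell+Nz)\exp(zc_i)\qquad (z\in \Z_p).
$$
Each factor ${f_i(\ell+Nz)}$ is a polynomial in~$z$ over~$\L$, hence analytic on~$\Z_p$, and ${\exp(zc_i)=\sum_{n\ge0}(c_i^n/n!)z^n}$ is analytic on~$\Z_p$ because ${\nu_p(c_i)>1/(p-1)}$ forces ${\nu_p(c_i^n/n!)\ge n\bigl(\nu_p(c_i)-1/(p-1)\bigr)\to+\infty}$ (which moreover shows ${\exp(zc_i)\in 1+\mathfrak m}$ for ${z\in \Z_p}$); so~$g_\ell$ is analytic on~$\Z_p$. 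For ${n\in \Z}$ we have ${g_\ell(n)=\sum_i f_i(\ell+Nn)\lambda_i^{\ell+Nn}=U(\ell+Nn)}$, which is~\eqref{eextrap}. Finally ${g_\ell(n)=U(\ell+Nn)\in \OO_{\K_\gerp}}$ for all ${n\in \Z}$; since~$\Z$ is dense in~$\Z_p$, the function~$g_\ell$ is continuous, and ${\OO_{\K_\gerp}=\K_\gerp\cap\OO_\L}$ is closed in~$\L$, it follows that ${g_\ell(\Z_p)\subseteq \OO_{\K_\gerp}}$.

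For the last assertion, assume~$U$ is non-degenerate but ${g_\ell\equiv 0}$ for some~$\ell$. Then ${U(\ell+Nn)=0}$ for every ${n\in \Z}$. But, as recalled in Subsection~\ref{ssgen}, non-degeneracy makes ${\lambda_1^N, \ldots, \lambda_s^N}$ pairwise distinct, so ${n\mapsto U(\ell+Nn)}$ is an LRS of minimal order ${m\ge1}$ and hence not identically zero — a contradiction. The one genuinely technical point in the whole argument is the uniform choice of~$N$ (the estimate ${\nu_p(\lambda_i^{(q-1)p^t}-1)\to+\infty}$) together with checking that the power series ${\sum_n (c_i^n/n!)z^n}$ converges on all of~$\Z_p$; once~$N$ is fixed so that every~$\lambda_i^N$ lies in the domain of~$\log$, everything else is bookkeeping. (One should read the statement with the understanding that~$U$ takes $\gerp$-integral values, which one may always arrange by multiplying~$U$ by a suitable nonzero integer, affecting neither the regularity of~$\gerp$ nor the set of twisted rational zeros.)
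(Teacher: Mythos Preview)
Your proof is correct and follows essentially the same route as the paper's: pass to a finite extension~$\L$ of~$\K_\gerp$ containing the Binet data, observe that the~$\lambda_i$ are units by regularity, choose~$N$ so that every~$\lambda_i^N$ lies in the domain of the $p$-adic logarithm, define~$g_\ell$ by the evident formula, and verify~\eqref{eextrap} together with the non-vanishing via Subsection~\ref{ssgen}. The only notable difference is cosmetic: the paper obtains~$N$ by the one-line topological observation that ${\DD_\L(1,\rho)}$ is an open (hence finite-index) subgroup of the compact group~$\OO_\L^\times$, whereas you construct~$N=(q-1)p^t$ explicitly. Your added parenthetical about $\gerp$-integrality of~$U$ is a fair point---the paper's own proof in fact only verifies that ${g_\ell(\Z_p)\subset \K_\gerp}$ and tacitly assumes integrality of the values.
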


\begin{proof}
Denote by~$\L$ the splitting field over~$\K_\gerp$ of the characteristic polynomial ${X^m-a_{m-1}X^{m-1}-\cdots-a_0}$. Then~$U$ admits the Binet expansion 
\begin{equation*}
U(n)=f_1(n)\lambda_1^n + \cdots +f_s(n)\lambda_s^n,     
\end{equation*}
where ${\lambda_1, \ldots, \lambda_s}$ are the distinct roots of the characteristic polynomial,  and  ${f_1, \ldots, f_s}$ are non-zero polynomials with coefficients in~$\L$. Since~$p$ is a regular prime, ${\lambda_1, \ldots, \lambda_s \in \OO_\L^\times}$. 

Let~$\rho$ be defined as in~\eqref{erho}. Since ${\rho<1}$, the disk $\DD_\L(1,\rho)$ is a multiplicative group. It is a finite index subgroup of~$\OO_\L^\times$, because~$\OO_\L^\times$ is compact and   $\DD_\L(1,\rho)$ is open. Hence there exists a positive integer~$N$ such that ${x^N\in \DD_\L(1, \rho)}$ for every ${x \in \OO_\L^\times}$. Note that we have 
\begin{equation}
\label{eexp}
x^{Nn}=\exp(n\log(x^N))\qquad (x\in \OO_\L^\times, \quad n\in \Z). 
\end{equation}
Now define, for ${\ell=0,1, \ldots, N-1}$ and ${z\in \Z_p}$,  
$$
g_\ell(z) := \sum_{i=1}^s\lambda_i^\ell f_i(\ell+Nz) \exp(z\log(\lambda_i^N)). 
$$
Note that, \textit{a priori}, ${g_\ell(z)\in \L}$, but we will see that ${g_\ell(z)\in \K_\gerp}$ in a while. 

From~\eqref{eexp} we deduce that~\eqref{eextrap} holds.
In particular, ${g_\ell(z)\in \K}$  when ${z\in \Z}$. Since~$\Z$ is dense in~$\Z_p$, we have ${g_\ell(z)\in \K_\gerp}$  for all ${z\in \Z_p}$.

When~$U$ is non-degenerate, the function~$g_\ell$ does not vanish identically, because the function ${n\mapsto U(\ell+Nn)}$ does not, see Subsection~\ref{ssgen}.  
\end{proof}

As a by-product, we prove the Theorem of Skolem-Mahler-Lech (see Theorem~\ref{thsml} above) for $\overline{\Q}$-valued LRS\@. 

\begin{corollary}
[Skolem-Mahler-Lech]
Let~$U$ be a non-degenerate $\overline{\Q}$-valued LRS\@. Then the equation ${U(n)=0}$ has at most finitely many solutions in ${n\in \Z}$. 
\end{corollary}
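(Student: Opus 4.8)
The plan is to deduce the corollary directly from Skolem's interpolation (Proposition~\ref{prsko}) together with the fact that a non-zero $p$-adic analytic function on a disk has finitely many zeros, recorded as property~(4) in Subsection~\ref{sspadic}. First I would reduce to a number-field-valued LRS: since~$U$ is non-degenerate it is in particular non-zero, hence has a well-defined minimal order~$m$ and a minimal recurrence relation with coefficients $a_0,\dots,a_{m-1}\in\overline{\Q}$. These coefficients, together with the initial values $U(0),\dots,U(m-1)$, form a finite set of algebraic numbers and so generate a number field~$\K$; then~$U$ is a $\K$-valued LRS.

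Next I would choose a prime~$\gerp$ of~$\K$ that is regular for~$U$. All but finitely many primes of~$\K$ qualify, since regularity only excludes those dividing the numerator of~$a_0$ or a denominator of one of the~$a_i$. Letting~$p$ be the rational prime below~$\gerp$ and applying Proposition~\ref{prsko}, I obtain a positive integer~$N$ and analytic functions
\begin{equation*}
g_0,\dots,g_{N-1}\colon\Z_p\longrightarrow\OO_{\K_\gerp},\qquad U(\ell+Nn)=g_\ell(n)\quad(\ell\in\{0,\dots,N-1\},\ n\in\Z),
\end{equation*}
and---crucially, this is where non-degeneracy of~$U$ enters---none of the~$g_\ell$ vanishes identically.

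Finally, since $\Z_p=\overline{\DD}(0,1)$ is a disk, each~$g_\ell$ is a non-zero analytic function on a disk and therefore has only finitely many zeros in~$\Z_p$, hence only finitely many zeros lying in~$\Z$. Consequently
\begin{equation*}
\{\,n\in\Z:U(n)=0\,\}=\bigcup_{\ell=0}^{N-1}\{\,\ell+Nn:\ n\in\Z,\ g_\ell(n)=0\,\}
\end{equation*}
is a finite union of finite sets, hence finite.

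I do not expect a real obstacle: all the substantive work is already contained in Proposition~\ref{prsko} and the elementary theory of $p$-adic analytic functions. The one point worth stressing is the function of the non-degeneracy hypothesis: it is precisely what guarantees, via the observation in Subsection~\ref{ssgen} that a non-degenerate LRS vanishes on no residue class, that the interpolating functions~$g_\ell$ are genuinely non-zero, so that the finiteness-of-zeros property applies to them.
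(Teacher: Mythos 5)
Your argument is correct and follows exactly the route the paper takes: choose a regular prime, apply Proposition~\ref{prsko} to obtain the interpolating analytic functions $g_\ell$ (which are non-zero by non-degeneracy), and conclude by the finiteness of zeros of a non-zero analytic function on a disk. The only difference is that you spell out the reduction to a number field and the availability of a regular prime, which the paper leaves implicit.
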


\begin{proof}
Pick some prime~$\gerp$ regular for~$U$. 
Then each of  the analytic functions~$g_\ell$ has at most finitely many zeros on~$\Z_p$, hence on~$\Z$. 
\end{proof}

Actually, the Skolem-Mahler-Lech Theorem, as stated in Theorem~\ref{thsml}, applies to LRS over an arbitrary field of characteristic~$0$. To extend it to this generality, one more ingredient is needed, the \textit{Lech-Cassels Specialization Theorem}, see~\cite{Ca76}.

\subsection{Proof of Theorems~\ref{theasy},~\ref{thtwint} and~\ref{thtwrat}}
\label{ssproofs}

Let~$U$ be an LRS taking values in a number field~$\K$, and let~$\gerp$ be a prime of~$\K$ regular for~$U$, see Subsection~\ref{ssfunction}. We denote by~$p$ the rational prime below~$\gerp$.  In this section we prove the following two theorems, which generalize Theorems~\ref{theasy},~\ref{thtwint} and~\ref{thtwrat} from the Introduction.

\begin{theorem}
\label{theasyk}
Let~$a$ be a zero of~$U$.  Then there exist a positive integer~$Q$, a positive integer~$\kappa$ and an integer~$\tau$  such that 
${\nu_\gerp(U(n))= \kappa \nu_p(n-a)+\tau}$
for ${n\equiv a\pmod Q}$. 
\end{theorem}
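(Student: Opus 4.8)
The statement to prove is Theorem~\ref{theasyk}: if $a\in\Z$ is a zero of $U$ and $\gerp$ is regular for $U$, then $\nu_\gerp(U(n))=\kappa\nu_p(n-a)+\tau$ on a suitable arithmetic progression $n\equiv a\pmod Q$. The plan is to apply the Skolem $p$-adic interpolation of Proposition~\ref{prsko} together with the structural dichotomy of Theorem~\ref{thfunml}. First I would invoke Proposition~\ref{prsko} to obtain the integer $N$ and the analytic functions $g_0,\dots,g_{N-1}:\Z_p\to\OO_{\K_\gerp}$ with $U(\ell+Nn)=g_\ell(n)$ for all $n\in\Z$ and $\ell\in\{0,\dots,N-1\}$. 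Since $a$ is an integer, write $a=\ell_0+Na_0$ with $\ell_0\in\{0,\dots,N-1\}$ and $a_0\in\Z$; then $g_{\ell_0}(a_0)=U(a)=0$, so the analytic function $g:=g_{\ell_0}$ has $a_0$ among its zeros, and in particular $g$ is not identically zero (alternatively this follows from non-degeneracy, but we do not even need that, since $g$ having a zero is what matters — wait, we do need $g\not\equiv 0$; this is guaranteed because a zero LRS has no zeros to speak of, or because $U$ is implicitly non-degenerate in this setting; I would simply note $g$ is non-zero since otherwise every $U(\ell_0+Nn)$ vanishes, contradicting minimality of the order unless $U\equiv 0$, which is excluded).

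Next I would feed $g$ into Theorem~\ref{thfunml}. This yields a positive integer $k$ such that on each residue class $z\equiv i\pmod{p^k}$ inside $\Z_p$, exactly one of options (C) or (L) holds. Take $i_0\in\{0,\dots,p^k-1\}$ with $i_0\equiv a_0\pmod{p^k}$. Since $a_0$ is a zero of $g$ and $\nu_p(z-a_0)$ is unbounded on this residue class (it contains points arbitrarily $p$-adically close to $a_0$), option (C) is impossible — under (C) the valuation $\nu_\gerp(g(z))$ would be constant, but it is $+\infty$ at $z=a_0$ and finite elsewhere on the class, since $g$ has only finitely many zeros. Hence option (L) applies: there exist $a_{i_0}\in\calA$, $\tau_{i_0}\in\Z$, $\kappa_{i_0}\in\Z_{>0}$ with $a_{i_0}\equiv i_0\pmod{p^k}$ and $\nu_p(g(z))=\kappa_{i_0}\nu_p(z-a_{i_0})+e^{-1}\tau_{i_0}$ on the class. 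I would then argue $a_{i_0}=a_0$: both are zeros of $g$ congruent mod $p^k$, and restricting to integers $z$ that are $p$-adically very close to $a_0$ forces $\nu_p(z-a_{i_0})$ large, hence $\nu_p(g(z))$ large — but if $a_{i_0}\neq a_0$ we could also take $z$ close to $a_0$ while $\nu_p(z-a_{i_0})$ stays bounded, contradiction. (A cleaner route: $a_0$ itself lies in the class, and by Theorem~\ref{thfunml}(L) the zero in any class satisfying (L) is unique — I would cite this or deduce it directly.)

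**Transferring back to $U$.** Now I translate the conclusion about $g$ into one about $U$. Set $Q:=Np^k$. For $n\in\Z$ with $n\equiv a\pmod Q$, write $n=\ell_0+Nn'$; then $n\equiv a=\ell_0+Na_0\pmod{Np^k}$ forces $n'\equiv a_0\pmod{p^k}$, so $n'\equiv i_0\pmod{p^k}$ and option (L) gives
\[
\nu_p(g_{\ell_0}(n'))=\kappa_{i_0}\nu_p(n'-a_0)+e^{-1}\tau_{i_0}.
\]
Since $U(n)=g_{\ell_0}(n')$ and $n-a=N(n'-a_0)$ with $\gcd(N,p)$ possibly nontrivial — actually $\nu_p(n-a)=\nu_p(N)+\nu_p(n'-a_0)$ — I would absorb the $\nu_p(N)$ term into the constant, and use $\nu_\gerp=e\,\nu_p$ (more precisely $\nu_\gerp(x)=e\,\nu_p(x)$ for $x\in\K_\gerp$ under the normalization implicit here) to rewrite the equation as $\nu_\gerp(U(n))=\kappa\nu_p(n-a)+\tau$ with $\kappa:=\kappa_{i_0}e$ (or $\kappa_{i_0}$, depending on the normalization of $\nu_\gerp$ — I would match whichever convention the paper fixes), and $\tau:=\tau_{i_0}-\kappa_{i_0}\nu_p(N)$. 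Since the right side must be an integer-valued statement and $\tau_{i_0}\in\Z$, the constants come out integral as required.

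**Main obstacle.** The genuinely substantive input — Theorem~\ref{thfunml}, itself quoted from \cite{BLNOW23} — is assumed, so the proof here is essentially bookkeeping. The one point requiring care is the bookkeeping between the three valuation normalizations: $\nu_p$ on $\Q_p$, the $e^{-1}$-scaled valuation appearing in Theorem~\ref{thfunml}, and $\nu_\gerp$ on the number field $\K$. I expect this normalization matching, together with the decomposition $n=\ell_0+Nn'$ and the harmless absorption of $\nu_p(N)$ into $\tau$, to be the only place where a misstep could occur; everything else is a direct application of the two cited results.
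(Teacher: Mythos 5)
Your argument is essentially identical to the paper's: invoke Proposition~\ref{prsko} to reduce to an analytic function $g=g_{\ell_0}$ with $g(a_0)=0$, apply Theorem~\ref{thfunml} (ruling out option (C) and identifying the zero in option (L) as $a_0$), and translate via $Q=Np^k$ and $\nu_\gerp=e\nu_p$. The only slip is in the final constant: since $\nu_\gerp(U(n))=e\kappa_{i_0}\nu_p(n'-a_0)+\tau_{i_0}$, one should take $\tau:=\tau_{i_0}-e\kappa_{i_0}\nu_p(N)=\tau_{i_0}-\kappa\nu_p(N)$ rather than $\tau_{i_0}-\kappa_{i_0}\nu_p(N)$ — exactly the kind of bookkeeping risk you yourself flagged, and harmless for the existence claim.
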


\begin{theorem} \label{thtwratk}
Let~${a\in \Q}$ be such that
 there exists a  sequence of integers  $(n_k)$ satisfying
$$
\nu_\gerp(U(n_k))\to +\infty, \qquad \nu_p(n_k-a) \to +\infty. 
$$
Then~$a$ is a TRZ of~$U$. 
\end{theorem}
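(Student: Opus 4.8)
To prove Theorem~\ref{thtwratk} the plan is to feed the hypothesis into Skolem's $p$-adic interpolation (Proposition~\ref{prsko}) through a continuity argument, and then to decode the resulting $p$-adic vanishing into the algebraic identity that defines a TRZ.

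First I would reduce to a single residue class. Since the integers $n_k$ occupy only finitely many residue classes modulo the integer~$N$ of Proposition~\ref{prsko}, after passing to a subsequence I may assume $n_k\equiv\ell\pmod N$ for a fixed $\ell\in\{0,\dots,N-1\}$ and write $n_k=\ell+N\tilde n_k$ with $\tilde n_k\in\Z$. Set $\alpha:=(a-\ell)/N\in\Q$. From $\tilde n_k-\alpha=(n_k-a)/N$ and $\nu_p(n_k-a)\to+\infty$ one gets $\nu_p(\tilde n_k-\alpha)\to+\infty$, so $\alpha$ is a $\Q_p$-limit of the integers $\tilde n_k$ and hence $\alpha\in\Z_p$; in particular the denominator of $a=\ell+N\alpha$ is prime to~$p$. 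By~\eqref{eextrap} we have $U(n_k)=g_\ell(\tilde n_k)$, so the hypothesis $\nu_\gerp(U(n_k))\to+\infty$ means $g_\ell(\tilde n_k)\to0$; as $g_\ell$ is continuous on $\Z_p$ and $\tilde n_k\to\alpha$, this forces $g_\ell(\alpha)=0$. Substituting $z=\alpha$ into the formula $g_\ell(z)=\sum_{i=1}^s\lambda_i^\ell f_i(\ell+Nz)\exp\bigl(z\log(\lambda_i^N)\bigr)$ from the proof of Proposition~\ref{prsko}, and using $\ell+N\alpha=a$, this reads
\[
\sum_{i=1}^s\lambda_i^\ell f_i(a)\exp\bigl(\alpha\log(\lambda_i^N)\bigr)=0
\]
in a finite extension of~$\K_\gerp$.

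Next I would decode the exponential terms. Since $\alpha\in\Z_p$, each $\alpha\log(\lambda_i^N)$ lies in the disk $\DD(0,\rho)$ of~\eqref{erho}, so the exponential and logarithm laws apply. Writing $\alpha=u/d$ in lowest terms (so $p\nmid d$), we get $\exp(\alpha\log(\lambda_i^N))^d=\exp(u\log(\lambda_i^N))=(\lambda_i^N)^u$; thus $\beta_i:=\lambda_i^\ell\exp(\alpha\log(\lambda_i^N))$ is an algebraic number with $\beta_i^d=\lambda_i^{\ell d+Nu}=\lambda_i^{da}$. Writing $a=r/q$ in lowest terms, the fact that $da\in\Z$ and $\gcd(r,q)=1$ forces $q\mid d$, say $d=qd_0$, whence $\beta_i^q=\omega_i\lambda_i^r$ for some $d_0$-th root of unity $\omega_i$; equivalently $\beta_i=\xi_i\lambda_i^a$ for a root of unity $\xi_i$ and some choice of the $q$-th root $\lambda_i^a$ of $\lambda_i^r$. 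Feeding this back into the displayed identity turns it into $\sum_{i=1}^s\xi_if_i(a)\lambda_i^a=0$. Finally, fixing an embedding $\overline\Q\hookrightarrow\overline{\Q_p}$ compatible with $\K\hookrightarrow\K_\gerp$, all of the $\lambda_i$, the coefficients of the $f_i$, the $\beta_i$, and the $\xi_i$ are algebraic, so this identity descends to one of the same form inside $\overline\Q\subseteq\overline\K$ — which is exactly the assertion that $a$ is a TRZ of~$U$.

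The pigeonhole reduction, the continuity/limit passage, and the embedding descent are all routine; the place that needs care is the decoding step, where one must exploit that $\alpha$ is \emph{both} rational \emph{and} a $p$-adic integer — rationality to make $\exp(\alpha\log(\lambda_i^N))$ an algebraic number, and $\alpha\in\Z_p$ to keep $\alpha\log(\lambda_i^N)$ inside $\DD(0,\rho)$ — and then track carefully which root of unity absorbs the passage between $d$-th and $q$-th roots.
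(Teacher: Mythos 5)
Your proposal is correct and follows essentially the same route as the paper's proof: pigeonhole to a fixed residue class modulo the Skolem period~$N$, pass to the limit using continuity of the interpolating analytic function to obtain $g_\ell(\alpha)=0$ at the $p$-adic rational point $\alpha=(a-\ell)/N$, then raise the exponential factors to a suitable integer power to identify them as $\lambda_i^a$ times roots of unity. The only cosmetic difference is in the bookkeeping of the decoding step — the paper raises $h_i(b)$ to the power $AN$ (with $A$ the denominator of $a$), while you raise $\beta_i$ to the power $d$ (the denominator of $\alpha$) and then disentangle the $q$-th, $d_0$-th, and $d$-th roots of unity explicitly — and you spell out the $\overline\Q\hookrightarrow\overline{\Q_p}$ descent, which the paper leaves implicit; both are sound and equivalent.
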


Theorem~\ref{theasyk} is more general than Theorem~\ref{theasy}, while Theorem~\ref{thtwratk} is more general than Theorem~\ref{thtwrat}  (and, \textit{a fortiori}, than Theorem~\ref{thtwint}). 

\begin{proof}[Proof of Theorem~\ref{theasyk}]
Let the integer~$N$ and the functions $g_0, \ldots, g_{N-1}$ be as in Proposition~\ref{prsko}. Let ${\ell\in \{0, \ldots, N-1\}}$ be such that ${a\equiv \ell \pmod N}$. Write ${a=\ell+bN}$ with ${b\in \Z}$. In the sequel, we denote ${g:=g_\ell}$. We have ${g(b)=0}$. 

Theorem~\ref{thfunml} implies that there exist positive integers~$k$ and~$\tau'$, and an integer~$\kappa'$ such that for ${z\equiv b\pmod{p^k}}$ we have 
\begin{equation}
\label{enupgz}
\nu_p(g(z))= \kappa'\nu_p(z-b)+e^{-1}\tau' . 
\end{equation}
Now set 
$$
Q:=Np^k, \qquad \kappa:=e\kappa', \qquad \tau:=\tau'-\kappa\nu_p(N). 
$$ 
Let ${n\equiv a\pmod Q}$. Then ${n\equiv \ell \pmod N}$, and for 
${m:=(n-\ell)/N}$ we have  
$$
m\equiv b \pmod {p^k}, \qquad \nu_p(m-b)=\nu_p(n-a)-\nu_p(N), \qquad g(m)=U(n). 
$$
Applying~\eqref{enupgz} with ${z=m}$, we obtain
$$
\nu_\gerp(U(n))=e\nu_p(g(m))= e\kappa'\nu_p(m-b)+\tau'=\kappa\nu_p(n-a)+\tau. 
$$
The theorem is proved. 
\end{proof}

\begin{proof}[Proof of Theorem~\ref{thtwratk}]
Once again, let~$N$ and  $g_0, \ldots, g_{N-1}$ be as in Proposition~\ref{prsko}. Let ${\ell\in \{0, \ldots, N-1\}}$ be such that ${n_k\equiv \ell\pmod N}$ holds for infinitely many~$k$. By taking a subsequence, we may assume that this holds for all~$k$. We denote ${g:=g_\ell}$.

Set ${b:=(a-\ell)/N}$ and ${m_k:=(n_k-\ell)/N}$. Then, ${m_k\to b}$ and ${g(m_k)\to 0}$ in the $p$-adic topology. Hence ${g(b)=0}$. 

(Note that, unlike in the proof of Theorem~\ref{theasyk}, we do not, in general, have ${g(b)=U(\ell+Nb)=U(a)}$; this would only be true  if ${b\in \Z}$. But this is not true in general:~$b$ is merely a rational number, not necessarily an integer.) 

Recall that 
$$
g(z)=g_\ell(z)= \sum_{i=1}^s\lambda_i^\ell h_i(z), \quad \text{where}\quad  h_i(z):=f_i(\ell+Nz) \exp(z\log(\lambda_i^N)). 
$$
Let~$A$ be the denominator of the rational number~$a$. Then 
$$
(h_i(b))^{AN}= \bigl(\lambda_i^\ell f_i(a)\bigr)^{AN} \exp(ANb\log(\lambda_i^N)). 
$$
Since ${ANb\in \Z}$, we have
${\exp(ANb\log(\lambda_i^N))=\lambda_i^{AN^2b}}$. Hence 
$$
(h_i(b))^{AN}=\lambda_i^{AN\ell+AN^2b}f_i(a)^{AN}=\bigl(\lambda_i^{a}f_i(a)\bigr)^{AN},  
$$
where we pick some definition for the rational power $\lambda_i^a$. Thus, 
$$
h_i(b)=\xi_i\lambda_i^af(a), 
$$
where~$\xi_i$ is a  root of unity.

We have proved that 
$$
0=g(b)= \sum_{i=1}^s\xi_i\lambda_i^af(a), 
$$
which exactly means that~$a$ is a TRZ of~$U$. The theorem is proved. 
\end{proof}



\section{Finiteness of twisted rational zeros}
\label{sfin}
In this section we prove Theorem~\ref{thfin}. Throughout this section, unless the contrary is stated explicitly,~$\K$ is a   field of characteristic~$0$. We fix an algebraic closure~$\overline{\K}$.  

For a positive integer~$n$ we fix ${\zeta_n\in \overline{\K}}$, a primitive $n^\tho$ root of unity. Recall that we denote by~$\mu_n$ the group of $n^\tho$ roots of unity, and by~$\mu_\K$ the group of roots of unity in~$\K$.  We denote by $\K^n$ the set of $n^\tho$ powers in~$\K$:
$$
\K^n:=\{\alpha^n:\alpha \in \K\}. 
$$

We denote by $\K_\ab$ the maximal abelian subfield of~$\K$;  that is, the maximal subfield of~$\K$ which is an abelian extension of~$\Q$. 



\subsection{Powers in fields}
\label{sspows}

The following result is due to Chevalley~\cite{Ch51} and Bass~\cite{Ba65}. The proofs can be also found in~\cite{Sm70} and~\cite{Bi23}. 
\begin{theorem}
\label{thbass}
[Chevalley, Bass]
Let~$\K$ be a finitely generated field of characteristic~$0$ (in particular,~$\K_\ab$ is a finite extension of~$\Q$). 
Then there exists a positive integer~$\Lambda$, depending only on the degree ${d:=[\K_\ab:\Q]}$, such that for every positive integer~$n$ the following holds: if ${\alpha\in \K}$ is a $\Lambda n^\tho$ power in  $\K(\zeta_{\Lambda n})$, then~$\alpha$ is an $n^\tho$  power in~$\K$.   In symbols: 
\begin{equation}
\label{echba}
\K(\zeta_{\Lambda n})^{\Lambda n}\cap \K\subset \K^n \qquad (n=1,2,3,\ldots). 
\end{equation} 
\end{theorem}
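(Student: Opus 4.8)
The plan is to prove Theorem~\ref{thbass} (the Chevalley--Bass theorem) by reducing it via Kummer theory to a statement about Galois cohomology, or more elementarily by a direct descent argument that controls how large a root of unity must be adjoined to $\K$ before $\alpha$ acquires an $n^{\tho}$ root.

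\textbf{Setup.} First I would reduce to the case where $n$ is a prime power. Indeed, if $n=\prod_j q_j^{e_j}$ and we know the conclusion for each prime power $q_j^{e_j}$ (with some $\Lambda_j$), then taking $\Lambda:=\operatorname{lcm}_j\Lambda_j$ and using the fact that $\K^{n}=\bigcap_j \K^{q_j^{e_j}}$ (since $\gcd$'s of the $n/q_j^{e_j}$ are~$1$, one can combine $q_j^{e_j}$-th roots into an $n$-th root by a B\'ezout argument) gives the general case. So assume $n=q^e$ with $q$ prime. Next, by Lemma~\ref{llang} (Lang's irreducibility criterion), the obstruction to $\alpha$ being a $q^e$-th power in $\K$ is governed by whether $\alpha\in\K^q$ and, when $q=2$ and $4\mid n$, whether $\alpha\in-4\K^4$; so it essentially suffices to understand when $\alpha\in\K(\zeta_{\Lambda n})^{q}\cap\K$ forces $\alpha\in\K^{q}$, together with a separate bookkeeping for the exceptional power-of-$2$ case.

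\textbf{Core argument.} Suppose $\alpha\in\K$ is an $N$-th power in $\K(\zeta_N)$ where $N=\Lambda n$, say $\alpha=\beta^N$ with $\beta\in\K(\zeta_N)$. Consider the field $\K(\beta)$. For each $\sigma\in\Gal(\overline\K/\K)$ we have $\sigma(\beta)/\beta\in\mu_N$, so $\beta$ generates an abelian (in fact Kummer-type) extension of $\K$ whose degree divides $N$; more precisely $\K(\beta)\subset\K(\zeta_N,\beta)$ and $\K(\zeta_N,\beta)/\K(\zeta_N)$ is cyclic of degree dividing $N$. The key point is to bound the order of $\beta$ modulo $\K^\times$: the cocycle $\sigma\mapsto\sigma(\beta)/\beta$ takes values in $\mu_N\cap\K(\zeta_N,\beta)$, and because $\K_{\ab}$ has bounded degree $d$ over $\Q$, the roots of unity that can appear in any abelian extension of $\K$ generated by such a $\beta$ — which is itself contained in $\K_{\ab}\cdot(\text{the Kummer part})$ — are constrained. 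Here I would invoke that $\mu_{\K(\zeta_N)}$ is controlled: $\zeta_N\in\K(\zeta_N)$ but the subfield of $\K(\zeta_N)$ abelian over $\K$ only contains $\zeta_M$ for $M$ dividing a bounded multiple of $N$ determined by $d$. Setting $\Lambda$ to absorb exactly these bounded factors (the order of the relevant torsion, as a function of $d$ only) lets one conclude that $\beta^{n}\in\K^\times$ already, hence—after the prime-power reduction and a final application of Kummer theory / Lemma~\ref{llang} to pass from ``$\beta^n\in\K$'' to ``$\alpha\in\K^n$''—that $\alpha\in\K^{n}$.

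\textbf{Main obstacle.} The delicate part is the power-of-$2$ anomaly: when $q=2$, the condition $\alpha\in-4\K^4$ in Lemma~\ref{llang} means $X^{2^e}-\alpha$ can be reducible even when $\alpha\notin\K^2$, and adjoining $\zeta_{2^k}$ interacts non-trivially with this (for instance $\sqrt{-1}\in\K(\zeta_4)$ changes the picture, cf.\ Remark~\ref{rlang}). I expect the bulk of the work to be a careful case analysis at the prime $2$, tracking which $2$-power roots of unity lie in $\K(\zeta_N)$ versus in $\K_{\ab}$, and choosing the $2$-part of $\Lambda$ large enough (depending only on the $2$-part structure of $\K_{\ab}$, hence only on $d$) that these degenerate Kummer phenomena cannot produce a genuine descent obstruction. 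The odd-prime case, by contrast, should follow cleanly from the cyclotomic-Kummer bookkeeping sketched above, since there $X^{q^e}-\alpha$ is irreducible over $\K$ iff $\alpha\notin\K^q$ with no exceptional clause.
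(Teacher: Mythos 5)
The paper does not prove Theorem~\ref{thbass}; it cites Chevalley, Bass, Smith and Bilu~\cite{Bi23} and moves on, so there is no ``paper's own proof'' to compare against. Evaluating your sketch on its own, the ingredients you name (Kummer descent, a bound in terms of $d=[\K_\ab:\Q]$, a separate analysis at $p=2$) are the right ones, but the core step is mis-aimed and the actual bounding mechanism is missing. You set $\alpha=\beta^{\Lambda n}$ with $\beta\in\K(\zeta_{\Lambda n})$ and aim to prove $\beta^{n}\in\K$, then ``pass from $\beta^n\in\K$ to $\alpha\in\K^n$''. But $\beta^n\in\K$ only yields $\alpha=(\beta^n)^{\Lambda}\in\K^{\Lambda}$, not $\alpha\in\K^n$; what is actually needed is that $\gamma:=\beta^{\Lambda}$ can be adjusted by an $n$-th root of unity to land in $\K$, so that $\alpha=\gamma^n\in\K^n$. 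Lemma~\ref{llang} is an irreducibility criterion for $X^m-\alpha$ and does not perform that descent. Relatedly, the claim that ``$\beta$ generates an abelian extension of $\K$'' is false in general (take $\K=\Q$, $\alpha=2$, $N=3$, $\beta=2^{1/3}$: $\K(\beta)/\K$ is not even normal); only $\K(\zeta_N,\beta)/\K(\zeta_N)$ is abelian.

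The substance of the theorem lives in the step you leave as ``the roots of unity that can appear are constrained''. That phrasing cannot be right as stated, since $\mu_{\K(\zeta_N)}$ grows without bound with $N$; what is controlled is not the torsion of $\K(\zeta_N)$ but a Galois cohomology group. Concretely, with $\L=\K(\zeta_{\Lambda n})$ and $G=\Gal(\L/\K)$, the Kummer/Hilbert~90 sequence identifies $\bigl(\L^{\times n}\cap\K^{\times}\bigr)/(\K^{\times})^n$ with $H^1(G,\mu_n)$, and the whole point of Chevalley--Bass is that this group is annihilated by a constant $\Lambda$ depending only on $d$ (via the fact that the image of $\Gal(\overline\K/\K)$ in $\widehat{\Z}^\times$ has index controlled by $d$). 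Your sketch never isolates this object nor explains why it is uniformly bounded in $n$. Finally, the prime-power reduction is more delicate than ``take $\Lambda=\operatorname{lcm}\Lambda_j$'': to apply a prime-power case inside $\K(\zeta_{\Lambda n})$, you need the statement to hold with the larger cyclotomic field $\K(\zeta_M)$ for every $M$ divisible by $\Lambda_p p^e$, and that uniformity in $M$ is itself part of what has to be proved, since $G$ and $H^1(G,\mu_{p^e})$ change with $M$.
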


The smallest positive integer~$\Lambda$ satisfying~\eqref{echba} will be 
called the
\textit{Chevalley-Bass number} of the field~$\K$; see \cite[Section~6]{Bi23}. 
 
It might not be easy to determine the Chevalley-Bass number of a given field~$\K$, but it is easy to estimate it in terms of  ${d:=[\K_\ab:\Q]}$. For instance, it is shown in \cite[Section~6.1]{Bi23} that, when ${d\ge 3}$, the Chevalley-Bass number~$\Lambda$ satisfies
${\Lambda \le \exp(d^{2/\log\log d})}$.

It will be convenient to introduce the following notion. 
For ${\alpha \in \K^\times}$ we define the \textit{Kummer exponent} of~$\alpha$ in~$\K$ as the biggest positive integer~$n$ such that ${\alpha\xi \in \K^n}$ for some root of unity ${\xi \in \K}$. In symbols:
$$
\varrho_\K(\alpha):=\max\{n: \alpha \in \mu_\K \K^n\}
$$
(recall that~$\mu_\K$ denotes the group of roots of unity in~$\K$).  
Clearly, ${\varrho_\K(\alpha)=\infty}$ if~$\alpha$ is  a root of unity, and,  when~$\K$ is a finitely generated field, $\varrho_\K(\alpha)$ is finite if~$\alpha$ is not a root of unity. 

\begin{proposition}
\label{proot}
Let ${\alpha\in \K}$ be such that $\varrho_\K(\alpha)$ is finite, and~$n$ a positive integer.   

\begin{enumerate}
\item
\label{ieasy}
We have ${\alpha \in \mu_\K\K^n}$ if and only if ${n\mid \varrho_\K(\alpha)}$. 

\item
\label{iharder}
Let ${\alpha^{1/n}\in \overline{\K}}$ be some determination of the $n^\tho$ root, and  ${\xi\in \overline{\K}}$ a root of unity. Then the degree ${[\K(\alpha^{1/n}\xi):\K]}$ is a multiple of ${n/\gcd(\varrho_\K(\alpha), n)}$. 
\end{enumerate}
\end{proposition}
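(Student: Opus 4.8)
The plan is to deduce both parts from the definition of the Kummer exponent together with Lemma~\ref{llang} (Lang's irreducibility criterion for binomials). Set $\varrho:=\varrho_\K(\alpha)$ and write, by definition of $\varrho$, $\alpha = \eta\beta^\varrho$ for some root of unity $\eta\in\mu_\K$ and some $\beta\in\K^\times$. For part~\ref{ieasy}, one direction is immediate: if $n\mid\varrho$, write $\varrho=nq$; then $\alpha=\eta(\beta^q)^n\in\mu_\K\K^n$. Conversely, suppose $\alpha=\zeta\gamma^n$ with $\zeta\in\mu_\K$, $\gamma\in\K^\times$. The key point is that $\varrho$ is the $\emph{maximum}$ of the set $S:=\{n:\alpha\in\mu_\K\K^n\}$, and I claim $S$ is closed under taking least common multiples, hence $S=\{n:n\mid\varrho\}$. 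To see the lcm-closure: if $\alpha=\eta_1\beta_1^{n_1}=\eta_2\beta_2^{n_2}$, then for $n=\operatorname{lcm}(n_1,n_2)$ one wants $\alpha\in\mu_\K\K^{n}$. This is exactly the content of Theorem~\ref{thbass} / the Chevalley–Bass formalism if one passes through cyclotomic extensions — but more elementarily it follows because in $\K$ the group $\K^\times/\mu_\K\K^\times$'s torsion structure forces the set of $n$ with $\alpha\in\mu_\K\K^n$ to be divisor-closed once we know it has a finite maximum; I would phrase this via: $\alpha\xi$ is an $n_1$-th power and an $n_2$-th power for suitable roots of unity $\xi$, so adjoining $\zeta_{n}$ and applying unique factorization of the fractional ideal / valuation-theoretic exponents shows it is an $n$-th power in $\K(\zeta_n)$, then descend. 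I expect the cleanest writeup simply invokes that $\varrho_\K$ is well-defined and that $\mu_\K\K^{n_1}\cap\mu_\K\K^{n_2}\subseteq\mu_\K\K^{\operatorname{lcm}(n_1,n_2)}$.

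For part~\ref{iharder}, the plan is to reduce to the case where $\xi=1$ and then to Lemma~\ref{llang}. Fix a determination $\alpha^{1/n}$ and a root of unity $\xi$; let $m$ be the order of $\xi$. Over the field $\K':=\K(\zeta_{nm})$ (which contains $\xi$ and all relevant roots of unity), $\alpha^{1/n}\xi$ and $\alpha^{1/n}$ generate the same extension, and more usefully $(\alpha^{1/n}\xi)^{nm}$ differs from $\alpha^{m}$ by a root of unity. Replacing $\alpha$ by $\alpha\cdot(\text{root of unity})$ does not change $\varrho_\K$ modulo the $\gcd$ we care about — more precisely $\varrho_{\K'}$ of $\alpha^{m}$ up to roots of unity is $m\varrho/\gcd(\dots)$ type bookkeeping. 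The heart of the matter: write $d_0:=\gcd(\varrho,n)$ and $n=d_0 n_1$. By part~\ref{ieasy} applied in $\K$, $\alpha=\eta\beta^{d_0}$ with $\eta\in\mu_\K$ and $\beta$ $\emph{not}$ of the form (root of unity)$\cdot(\K^\times)^{p}$ for any prime $p\mid n_1$ — this is where finiteness of $\varrho$ and the definition as a maximum are used, since otherwise $d_0p\mid\varrho$ would contradict maximality. Then $\beta^{1/n_1}$ (a suitable $n_1$-th root of $\beta$) satisfies the hypotheses \eqref{efirstass}, \eqref{esecondass} of Lemma~\ref{llang} over $\K(\zeta_{\text{enough}})$ — possibly after a harmless adjustment at the prime $2$ using Remark~\ref{rlang} by working over a field containing $\sqrt{-1}$ — so $X^{n_1}-\beta$ is irreducible there, giving $[\K(\beta^{1/n_1}):\K]$ divisible by $n_1=n/d_0$. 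Finally $\alpha^{1/n}\xi$ is, up to a root of unity lying in the cyclotomic part, a power of $\beta^{1/n_1}$ times roots of unity, and degree multiplicativity in the tower $\K\subseteq\K(\text{roots of unity})\subseteq\K(\alpha^{1/n}\xi,\text{roots of unity})$ transfers the divisibility $n/d_0 \mid [\K(\alpha^{1/n}\xi):\K]$, since the cyclotomic degree is coprime to the relevant part — or, if not coprime, the divisibility still survives because $\K(\zeta)^{\,n_1}\cap\K\subseteq\mu_\K\K^{n_1}$ by the Chevalley–Bass statement absorbed into the definition of $\varrho$.

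The main obstacle I anticipate is the interaction with roots of unity and the prime $2$: Lemma~\ref{llang} has the extra hypothesis \eqref{esecondass} about $-4\K^4$ when $4\mid m$, and introducing $\xi$ and passing to cyclotomic fields can both help (adjoining $\sqrt{-1}$, so Remark~\ref{rlang} kills the bad hypothesis) and hurt (the Kummer exponent can change when we adjoin roots of unity). The careful bookkeeping is to work over $\K$ itself as long as possible — using that $\varrho_\K(\alpha)$ is defined with roots of unity $\emph{in}\ \K$ — and only at the last step adjoin $\zeta_N$ for $N$ large enough and invoke that adjoining roots of unity does not create new $n_1$-th powers out of $\alpha$ beyond what $\mu_\K\K^{n_1}$ already sees, which is precisely Theorem~\ref{thbass}. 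So the proof is really: (i) $\alpha=\eta\beta^{d_0}$ with $\beta$ maximally non-a-power; (ii) binomial irreducibility of $X^{n/d_0}-\beta$ via Lemma~\ref{llang}; (iii) tower law plus Chevalley–Bass to pass from $\K(\beta^{1/n_1})$ to $\K(\alpha^{1/n}\xi)$ without losing the divisibility by $n/\gcd(\varrho_\K(\alpha),n)$.
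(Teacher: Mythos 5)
Your proof of item~\ref{ieasy} is essentially correct, though over-complicated: the clean way to see that the set ${S:=\{n:\alpha\in\mu_\K\K^n\}}$ is closed under $\mathrm{lcm}$ is simply that the group ${\K^\times/\mu_\K}$ is torsion-free, so one can do the Bézout computation there; no cyclotomic extensions, fractional ideals or Chevalley--Bass are needed. The paper dismisses this item as immediate.

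For item~\ref{iharder}, your plan has a genuine gap at step~(iii), and it is precisely the step where the paper's proof diverges from yours. Your steps~(i) and~(ii) are sound: writing ${\alpha=\eta\beta^{d_0}}$ with ${d_0:=\gcd(\varrho_\K(\alpha),n)}$, one does get ${\beta\notin\mu_\K\K^p}$ for every prime ${p\mid n_1:=n/d_0}$ (else ${pd_0\mid\varrho_\K(\alpha)}$ and ${pd_0\mid n}$ would contradict maximality of the gcd), and your worry about hypothesis~\eqref{esecondass} of Lemma~\ref{llang} is actually unfounded: since ${-1\in\mu_\K}$, a relation ${\beta=-4\gamma^4=(-1)(2\gamma^2)^2}$ would already put ${\beta\in\mu_\K\K^2}$, which you have excluded. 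So ${X^{n_1}-\beta}$ is irreducible over~$\K$ and ${[\K(\beta^{1/n_1}):\K]=n_1}$. The problem is what comes next. Writing ${\alpha^{1/n}\xi=\zeta'\beta^{1/n_1}}$ with ${\zeta':=\xi\eta^{1/n}}$ a root of unity, what you actually need is the implication
\begin{equation*}
\gcd\bigl(\varrho_\K(\beta),n_1\bigr)=1 \ \Longrightarrow\ n_1\mid\bigl[\K(\beta^{1/n_1}\zeta'):\K\bigr]\quad\text{for every root of unity }\zeta',
\end{equation*}
and this is exactly item~\ref{iharder} of the proposition applied to $\beta$ and~$n_1$. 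In other words, step~(iii) reduces the statement to a special case of itself, and the ``tower law plus Chevalley--Bass'' sketch you give does not close the circle: a tower through ${\K(\zeta_N)}$ only gives you the identity ${[\L:\K]\cdot[\L(\zeta_N):\L]=n_1'\cdot[\K(\zeta_N):\K]}$ with ${n_1'}$ the order of $\beta$ modulo $n_1$-th powers in ${\K(\zeta_N)}$, and none of that controls ${[\L:\K]}$ from below, since ${n_1'}$ may be smaller than $n_1$ and the cyclotomic degree need not be coprime to~$n_1$. Multiplying an $n$-th root by a root of unity really can reduce the degree (e.g.\ ${(-4)^{1/2}\cdot i=-2}$ over~$\Q$), and this interaction is the whole content of item~\ref{iharder}.

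The paper avoids the issue entirely by a norm argument. With ${\L:=\K(\alpha^{1/n}\xi)}$ and ${m:=[\L:\K]}$, every $\K$-embedding of~$\L$ sends ${\alpha^{1/n}\xi}$ to ${\alpha^{1/n}}$ times a root of unity, so ${\NN_{\L/\K}(\alpha^{1/n}\xi)}$ equals ${\alpha^{m/n}}$ times a root of unity and lies in~$\K$. Choosing ${r,s}$ with ${mr+ns=d:=\gcd(m,n)}$, the element ${\alpha^s\NN_{\L/\K}(\alpha^{1/n}\xi)^r\in\K}$ is ${\alpha^{d/n}}$ times a root of unity, whence ${\alpha\in\mu_\K\K^{n/d}}$ and ${n/d\mid\varrho_\K(\alpha)}$; this gives ${n/\gcd(\varrho_\K(\alpha),n)\mid d\mid m}$ directly. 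The root of unity~$\xi$ is absorbed into the norm and never needs to be tracked through a cyclotomic tower. If you want to keep the spirit of your approach for item~\ref{iharder}, the missing ingredient is precisely this norm trick applied at the level of~$\beta$; without it, step~(iii) is not a proof.
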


\begin{proof}
Item~\ref{ieasy} follows immediately from the definition. To prove item~\ref{iharder}, define 
$$
\L:=\K(\alpha^{1/n}\xi), \qquad m:=[\L:\K], \qquad\rho:=\varrho_\K(\alpha), \qquad d:=\gcd(m,n). 
$$
All conjugates of $\alpha^{1/n}$ over~$\K$ are equal to $\alpha^{1/n}$ times a root of unity. Hence ${\beta:=\NN_{\L/\K}(\alpha^{1/n}\xi)}$ is $\alpha^{m/n}$ times a root of unity. Let ${r,s\in \Z}$ be such that ${mr+ns=d}$. Then ${\gamma:=\alpha^s\beta^r}$ is $\alpha^{d/n}$ times a root of unity. Since $\gamma^{n/d}$ is~$\alpha$ times a root of unity, we have ${n/d\mid \rho}$. Hence ${n/d\mid \gcd( \rho,n)}$. It follows that ${n/\gcd(\rho, n)}$ divides~$d$. Hence it divides~$m$. 
\end{proof}


\subsection{Equations in roots of unity}
\label{ssrun}

The following result is due to Dvornicich and Zannier \cite{DZ00,Za95}, who improved on the previous work of Mann~\cite{Ma65} and of Conway and Jones~\cite{CJ76}. In this subsection~$\K$ is a finitely generated field of characteristic~$0$. 

\begin{theorem}
\label{thdz}
[Dvornicich, Zannier]
Let ${\alpha_1, \ldots, \alpha_s}$ be non-zero elements of~$\K$, and ${\xi_1, \ldots, \xi_s\in \overline{\K}}$ roots of unity. Assume that 
\begin{equation*}
\alpha_1\xi_1+ \cdots +\alpha_s\xi_s=1, 
\end{equation*}
and no proper sub-sum of the sum on the left vanishes: ${\sum_{i\in I}\alpha_i\xi_i\ne 0}$ when ${\varnothing \subsetneq I\subsetneq\{1, \ldots, s\}}$. 
Then the order of the multiplicative group generated by 
${\xi_1, \ldots, \xi_s}$ is effectively bounded in terms of  ${d=[\K_\ab:\Q]}$ and~$s$. 
\end{theorem}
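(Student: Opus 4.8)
The plan is to prove the sharper statement: if $N$ denotes the order of the group $\langle\xi_1,\dots,\xi_s\rangle$, then $N$ is bounded in terms of $s$ and $d:=[\K_\ab:\Q]$. This is the theorem of Dvornicich and Zannier, refining Mann and Conway--Jones, and I would follow the shape of their argument. Since $\K(\xi_1,\dots,\xi_s)=\K(\zeta_N)$, the whole problem takes place inside a single cyclotomic extension of~$\K$, so the first task is to see that its Galois group is nearly as large as for $\Q$. Replacing $\K$ by the finitely generated field $\Q(\alpha_1,\dots,\alpha_s)$ (whose maximal abelian subfield lies in the original $\K_\ab$, so still has degree $\le d$), one notes that $\Q(\zeta_N)\cap\K$ is an abelian number field contained in $\K$, hence in $\K_\ab$, whence $[\Q(\zeta_N)\cap\K:\Q]\le d$. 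By linear disjointness $\Gamma:=\Gal(\K(\zeta_N)/\K)\cong\Gal(\Q(\zeta_N)/\Q(\zeta_N)\cap\K)$, a subgroup of $(\Z/N\Z)^\times$ of index at most $d$; in particular $[\K(\zeta_N):\K]\ge\varphi(N)/d$, and more generally linear-independence statements over intermediate fields survive up to the same factor $d$ (here one uses irreducibility of cyclotomic and of Kummer polynomials $X^m-\alpha$, as in Lemma~\ref{llang}).

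\emph{Bounding the $p$-part of $N$, one prime at a time.} Fix $p\mid N$, write $N=p^aM$ with $p\nmid M$, set $\L:=\K(\zeta_M)$, and split $\xi_i=\eta_i\chi_i$ with $\eta_i\in\mu_{p^a}$, $\chi_i\in\mu_M$. Grouping the summands of $\sum_i\alpha_i\xi_i=1$ by the value of the $p$-part $\eta_i$ rewrites the relation as $\sum_{j=1}^{t}\beta_j\eta_j=1$ with $\beta_j\in\L^\times$, the $\eta_j\in\mu_{p^a}$ distinct, $t\le s$, and, after deleting a suitable subset, no vanishing proper sub-sum. The group $\Gamma$ surjects onto a subgroup $\Gamma_p\le(\Z/p^a\Z)^\times$ of index $\le d$; applying $\sigma-\id$ for $\sigma\in\Gamma_p$ yields fresh vanishing sums $\sum_j\beta_j(\eta_j-\sigma\eta_j)=0$ over $\L$. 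As soon as $p>d$ the index of $\Gamma_p$ is prime to $p$, so $\Gamma_p$ contains a full $p$-Sylow of $(\Z/p^a\Z)^\times$, giving us for $a\ge 2$ the order-$p$ automorphism $\zeta_{p^a}\mapsto\zeta_{p^a}^{1+p^{a-1}}$ and for $a=1$ all of $(\Z/p\Z)^\times$. Feeding these into the relation, together with the almost-full linear independence of the powers of $\zeta_p$ over $\L$, forces --- via the no-vanishing-sub-sum hypothesis --- that $t$ be of size comparable to $p$, hence $p$ is bounded by an explicit function of $s$ and $d$, and likewise $a$ is bounded. Primes $p\le d$ are already bounded, and the same counting bounds their exponents as well as the total number of primes dividing $N$; multiplying these finitely many bounded prime powers gives the required bound on $N$.

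\emph{Main obstacle.} Two points require care. First, one must verify that the no-vanishing-proper-sub-sum property genuinely survives the regrouping by $p$-parts, by tracking which sub-sums of the regrouped equation pull back to sub-sums of the original. Second, and more substantially, the classical Mann/Conway--Jones counting has to be run with only a finite-index subgroup $\Gamma_p$ of $(\Z/p^a\Z)^\times$ at one's disposal rather than the full unit group; this is precisely what injects the dependence on $d=[\K_\ab:\Q]$ into the final bound, and making that dependence explicit (rather than merely finite) is the real content of the Dvornicich--Zannier refinement. A clean way to organize the write-up is to dispose first of the case $\K=\Q$ (pure Mann/Conway--Jones) and then bootstrap to a general finitely generated~$\K$ using the index-$\le d$ description of $\Gamma$ obtained in the first step.
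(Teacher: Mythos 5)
The paper does not prove this statement: it is imported directly from Dvornicich and Zannier \cite{DZ00,Za95} (sharpening Mann \cite{Ma65} and Conway--Jones \cite{CJ76}), and the two displayed inequalities immediately after the theorem are those authors' quantitative output. There is therefore no proof in the paper to compare against. Your proposal attempts to reconstruct the Dvornicich--Zannier argument from scratch, and the overall shape is right: reducing to a finitely generated field, observing that ${\Q(\zeta_N)\cap\K\subseteq\K_\ab}$ so that ${\Gal(\K(\zeta_N)/\K)}$ has index ${\le d}$ in $(\Z/N\Z)^\times$, regrouping the relation by $p$-parts into ${\sum_j\beta_j\eta_j=1}$ over ${\L=\K(\zeta_M)}$, and applying ${\sigma-\id}$ for $\sigma$ fixing $\L$ are all the correct moves. (One simplification you don't need to worry about: the regrouped relation inherits the no-vanishing-proper-sub-sum property automatically, since any vanishing sub-sum of it pulls back to a vanishing proper sub-sum of the original; no deletion step is required.)

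That said, the sketch contains both a concrete error and a substantive gap. The error: for ${a=1}$ a $p$-Sylow subgroup of $(\Z/p\Z)^\times$ is trivial, since ${p\nmid p-1}$, so the observation that $\Gamma_p$ contains a full $p$-Sylow is vacuous and does \emph{not} give ${\Gamma_p=(\Z/p\Z)^\times}$. From ${[(\Z/p^a\Z)^\times:\Gamma_p]\le d}$ one only gets ${\#\Gamma_p\ge(p-1)/d}$, and it is exactly this weaker input that produces the factor $\gcd(d,p-1)$ in the Dvornicich--Zannier bound ${\frac{p-1}{\gcd(d,p-1)}-1\le s}$ recorded in the paper. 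The gap: the decisive assertion that the extra Galois relations ``force $t$ to be of size comparable to $p$'' is stated, not argued, and this counting is precisely the content of the theorem. You correctly identify it as the main obstacle, but as written the proposal stops short of a proof, and the one place where you do offer a mechanism (the Sylow step) fails in exactly the case ${a=1}$, which is the case needed to bound the prime divisors of the order rather than their exponents.
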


In fact, 
Dvornicich and Zannier prove that, denoting by~$r$ the order of the group generated by 
${\xi_1, \ldots, \xi_s}$, we have the following properties:
\begin{itemize}
\item
if ${p^{a+1}\mid r}$ for some positive integer~$a$ then 
${p^a\mid 2d}$;

\item
${\displaystyle 
s+1\ge\dim_{\K}(\K+\K\xi_1+\cdots+\K\xi_s)\ge 1+\sum_{p\|r}\left(\frac{p-1}{\gcd(d,p-1)}-1\right) }$.  
\end{itemize}
Clearly, using these properties, it is easy to bound~$r$ explicitly in terms of~$d$ and~$s$. 

\subsection{A Kummer property}

As before,~$\K$ is a finitely generated field of characteristic~$0$. Recall that we denote by $\K_\ab$ the maximal abelian subfield of~$\K$. Let~$\Gamma$ be the division group of the multiplicative group~$\K^\times$:
$$
\Gamma:=\{a\in \overline{\K}^\times: \text{$a^n\in \K^\times$ for some positive integer~$n$}\}. 
$$
The following key proposition is, essentially, due to Laurent~\cite{La84}. 

\begin{proposition}
\label{prkumlaur}
Let ${\alpha_1, \ldots, \alpha_s \in \Gamma} $ be such that ${\alpha_1+\cdots+\alpha_s=1}$, and no proper sub-sum of the sum ${\alpha_1+\cdots+\alpha_s}$ vanishes.  Let~$\Lambda$ 
be the Chevalley-Bass number of~$\K$ (see Section~\ref{sspows}). Then there exist roots of unity ${\xi_1, \ldots, \xi_s\in \overline{\K}}$ such that 
\begin{equation}
\alpha_i^\Lambda \xi_i\in \K \qquad (i=1, \ldots, m),
\end{equation}
\end{proposition}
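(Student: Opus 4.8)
The plan is to first pass to a convenient finite extension where everything splits into roots, then apply the Dvornicich--Zannier theorem on vanishing sums of roots of unity, and finally descend back to~$\K$ using the Chevalley--Bass theorem. The starting point is that each ${\alpha_i\in\Gamma}$ is a root of some ${a_i\in\K^\times}$, so there is a common positive integer~$n$ with ${\alpha_i^n\in\K^\times}$ for all~$i$. Consider the field ${\L:=\K(\zeta_n,\alpha_1,\ldots,\alpha_s)}$. For each pair ${i\ne j}$, the quotient ${\alpha_i/\alpha_j}$ satisfies ${(\alpha_i/\alpha_j)^n\in\K^\times\subset\L^\times}$, and the key point will be to arrange (by choosing the determinations of the $n^{\tho}$ roots, or by enlarging~$n$ suitably) that ${\alpha_i/\alpha_j}$ is \emph{itself} an element of~$\K$ times a root of unity whenever it is algebraic of small degree; more precisely, I want to extract from the relation ${\alpha_1+\cdots+\alpha_s=1}$ a statement about roots of unity. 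Dividing by~$\alpha_1$ is not quite the move; instead I would work directly with the relation as a vanishing sum over~$\L$ and exploit its homogeneity under Galois.

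The main step is the following. Since ${\alpha_1+\cdots+\alpha_s=1}$ with no vanishing proper sub-sum, for every ${\sigma\in\Gal(\overline\K/\K)}$ we also have ${\sigma(\alpha_1)+\cdots+\sigma(\alpha_s)=1}$ with no vanishing proper sub-sum. Because ${\alpha_i^n\in\K^\times}$, we have ${\sigma(\alpha_i)=\varepsilon_i(\sigma)\alpha_i}$ for some $n^{\tho}$ root of unity ${\varepsilon_i(\sigma)}$. Subtracting the two relations, ${\sum_i(\varepsilon_i(\sigma)-1)\alpha_i=0}$; using that the only sub-sums of the $\alpha_i$ that vanish are the empty one and the full one, a short combinatorial argument forces all the $\varepsilon_i(\sigma)$ to be equal for a given~$\sigma$ — call the common value ${\varepsilon(\sigma)}$ — and then ${\sum_i(\varepsilon(\sigma)-1)\alpha_i=0}$ together with ${\sum_i\alpha_i=1\ne 0}$ gives ${\varepsilon(\sigma)=1}$. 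Hence every $\alpha_i$ is Galois-invariant over~$\K$, i.e.\ already ${\alpha_i\in\K}$, and one could even take all $\xi_i=1$. Wait — that would make the proposition trivial, which signals that I am missing a hypothesis: the $\alpha_i$ need not all lie in a common radical extension with the \emph{same} exponent, and more importantly the sum ${\alpha_1+\cdots+\alpha_s=1}$ is exactly a vanishing-sum-of-type situation, but the $\alpha_i$ are not assumed to have any fixed relation to $\K$ beyond lying in~$\Gamma$. So the correct approach must genuinely use Chevalley--Bass, as the statement advertises.

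Here is the corrected plan. Each ${\alpha_i\in\Gamma}$ has some minimal positive integer ${n_i}$ with ${\alpha_i^{n_i}\in\K^\times}$; set ${n=\mathrm{lcm}(n_i)}$ and work in ${\L=\K(\zeta_n)}$, over which ${X^{n}-\alpha_i^{n}}$ factors and we may write each ${\alpha_i}$ (after fixing a compatible system of roots) so that ${\alpha_i^{n}\in\K^\times}$. The Galois argument above shows ${\sigma(\alpha_i)/\alpha_i}$ is independent of~$i$ and equals ${\varepsilon(\sigma)\in\mu_n}$, so the map ${\sigma\mapsto\varepsilon(\sigma)}$ is a homomorphism ${\Gal(\overline\K/\K)\to\mu_n}$ that factors through ${\Gal(\K(\alpha_1)/\K)}$, which is therefore abelian; hence ${\K(\alpha_1,\ldots,\alpha_s)/\K}$ is abelian and in particular ${\alpha_i^{n}\in\K^\times}$ lies in $(\text{abelian over }\K)$. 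Now apply Chevalley--Bass: with ${\Lambda}$ the Chevalley--Bass number of~$\K$, and using that ${\alpha_i^{\Lambda n}\in\L^{\Lambda n}\cap\K^\times}$ for ${\L=\K(\zeta_{\Lambda n})}$ (this is where I must check the power is genuinely a $\Lambda n^{\tho}$ power in $\K(\zeta_{\Lambda n})$, namely ${\alpha_i^{\Lambda n}=(\alpha_i^{\Lambda})^{n}}$ and $\alpha_i^{\Lambda}$ being fixed by the abelian Galois action means it lands in the right cyclotomic layer), conclude ${\alpha_i^{\Lambda}}$ is an $n^{\tho}$ power times a root of unity, equivalently ${\alpha_i^{\Lambda}\xi_i^{-1}\in\K}$ for a suitable ${\xi_i\in\mu_{\overline\K}}$, which is the assertion (with the harmless typo ${m}$ for ${s}$ in the displayed line). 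The main obstacle I anticipate is the bookkeeping in the second paragraph's combinatorial step — verifying that "no vanishing proper sub-sum" really does force all the $\varepsilon_i(\sigma)$ equal — and, relatedly, making the passage "abelian Galois action $\Rightarrow$ hypothesis \eqref{echba} of Chevalley--Bass applies" fully rigorous, since \eqref{echba} is about membership in ${\K(\zeta_{\Lambda n})^{\Lambda n}}$ and I need to produce an explicit $\Lambda n^{\tho}$ root inside that cyclotomic extension.
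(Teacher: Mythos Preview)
Your overall shape --- Galois action, then descent via Chevalley--Bass --- matches the paper, but the key middle step is broken. The claim that ``no vanishing proper sub-sum'' forces all the $\varepsilon_i(\sigma)$ to be equal is false. Take ${\K=\Q}$, ${\alpha_1=(1+i)/2}$, ${\alpha_2=(1-i)/2}$: then ${\alpha_1+\alpha_2=1}$ with no vanishing proper sub-sum, ${\alpha_i^4=-1/4\in\Q}$ so ${\alpha_i\in\Gamma}$, and complex conjugation gives ${\varepsilon_1=-i\ne i=\varepsilon_2}$. The hypothesis only rules out $\{0,1\}$-coefficient sub-sums of the~$\alpha_i$ from vanishing, not arbitrary linear relations like ${\sum_i(\varepsilon_i-1)\alpha_i=0}$. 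Your ``corrected plan'' repeats the same claim verbatim, so the gap persists, and the abelianness conclusion you draw from it (via a single well-defined character~$\varepsilon$) is unsupported.

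The paper's repair is twofold. First, work over ${G=\Gal(\overline{\K}/\K(\zeta_n))}$ rather than over~$\K$: this makes the maps ${\chi_i:\sigma\mapsto\sigma(\alpha_i)/\alpha_i}$ genuine characters ${G\to\mu_n}$. Second, instead of fixing a single~$\sigma$ and subtracting, treat ${\sum_i\alpha_i\chi_i=1}$ as an identity of functions on~$G$, append ${\alpha_0=-1}$, ${\chi_0=1}$, group the indices by which~$\chi_i$ coincide \emph{as characters}, and invoke Artin's theorem on linear independence of characters to get ${\sum_{i\in I_k}\alpha_i=0}$ for each group~$I_k$. The no-vanishing-sub-sum hypothesis then forces a single group, i.e.\ all ${\chi_i\equiv 1}$, hence ${\alpha_i\in\K(\zeta_n)}$. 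This is exactly the explicit $\Lambda n^{\tho}$ root inside $\K(\zeta_{\Lambda n})$ you were missing: now ${\alpha_i^{\Lambda n}\in\K(\zeta_{\Lambda n})^{\Lambda n}\cap\K}$, Chevalley--Bass gives ${\alpha_i^{\Lambda n}\in\K^n}$, and ${\alpha_i^\Lambda\xi_i\in\K}$ follows. (Dvornicich--Zannier is not used in this proposition; it enters only in the next one.)
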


\begin{proof}
We follow  Laurent \cite[Section~2.2]{La84}, but we replace the cohomological argument by a reference to Theorem~\ref{thbass}. 

Let~$n$ be a positive integer such that ${\alpha_1^n, \ldots, \alpha_s^n\in \K}$. Denote by~$G$ the  Galois group ${\Gal(\overline{\K}/\K(\zeta_n))}$.  For ${i\in\{1, \ldots, s\}}$ let ${\chi_i:G\to  \K(\zeta_n)}$ be the character of~$G$ defined by ${\sigma\mapsto\sigma(\alpha_i)/\alpha_i}$. 
Since ${\sigma(\alpha_1)+\cdots +\sigma(\alpha_s)=1}$ for every ${\sigma\in G}$, we have 
${\alpha_1\chi_1+\cdots +\alpha_s\chi_s=1}$.

We claim that the characters ${\chi_1, \ldots,\chi_s}$ are all trivial: 
\begin{equation}
\label{ealltriv}
\chi_1=\cdots=\chi_s=1.
\end{equation}
Indeed, defining ${\alpha_0:=-1}$ and ${\chi_0:=1}$, we have ${\alpha_0\chi_0+\cdots +\alpha_s\chi_s=0}$. After renumbering the characters ${\chi_1, \ldots,\chi_s}$, we may assume that for some ${r\ge 0}$ the characters ${\chi_0, \ldots, \chi_r}$ are distinct, and each of the remaining ${\chi_{r+1}, \ldots, \chi_s}$ is equal to one of ${\chi_0, \ldots, \chi_r}$. 
For ${k=0, \ldots, r}$ define ${I_k:=\{i: \chi_i=\chi_k\}}$. Then 
$$
\sum_{k=0}^r\chi_k\sum_{i\in I_k}\alpha_i=0. 
$$
Artin's Theorem on Characters (see, for instance, Theorem~4.1 in \cite[Chapter~VI]{La02}) implies that ${\sum_{i\in I_k}\alpha_i=0}$ for every~$k$. Since no proper sub-sum  of ${\alpha_1+\cdots+\alpha_s}$ vanishes, this is possible only if ${r=0}$, which proves~\eqref{ealltriv}.

It follows from~\eqref{ealltriv} that ${\alpha_1, \ldots, \alpha_s\in \K(\zeta_n)}$. Hence each $\alpha_i^{\Lambda n}$ is an $\Lambda n^\tho$ power in $\K(\zeta_n)$. 
Theorem~\ref{thbass} implies that $\alpha_i^{\Lambda n}$ is an $n^\tho$ power in~$\K$. It follows that ${\alpha_i^\Lambda\xi_i\in \K}$ for some root of unity~$\xi_i$, as wanted. 
\end{proof}

\subsection{Proof of Theorem~\ref{thfin}}
\label{ssprfin}

Let~$U$ be a non-degenerate LRS with values in a field of characteristic~$0$ and with  Binet expansion 
\begin{equation}
\label{ebin}
U(n)=f_1(n)\lambda_1^n+\cdots+f_s(n) \lambda_s^n. 
\end{equation}
Let~$\K$ be a finitely generated field, containing ${\lambda_1,\ldots, \lambda_s}$ and the coefficient of the polynomials $f_1, \ldots, f_s$.  Recall that we denote by~$\K_\ab$ the maximal abelian subfield of~$\K$. Since~$\K$ is finitely generated,~$\K_\ab$ is a finite extension of~$\Q$, and we denote by~$d$ its degree over~$\Q$.  

Recall that we denote by $\varrho_\K(\alpha)$ the Kummer exponent of ${\alpha \in \K}$, see Subsection~\ref{sspows}. Since the~$U$ is non-degenerate and the field~$\K$ is finitely generated, we have ${\varrho_\K(\lambda_i/\lambda_j)<\infty}$ when ${i\ne j}$. 
We set
\begin{equation}
\label{edefrho}
\rho:=\gcd\{\varrho_\K(\lambda_i/\lambda_j): 1\le i<j\le s\}. 
\end{equation}

Let~$a$ be a TRZ of~$U$. Recall that this means the following: there exist roots of unity ${\xi_1, \ldots, \xi_s\in \overline{\K}}$ such that for some determinations of ${\lambda_1^a, \ldots, \lambda_s^a\in \overline{\K}}$ we have 
\begin{equation}
\label{etrz}
\xi_1f_1(a)\lambda_1^a+\cdots + \xi_sf_s(a)\lambda_s^a=0. 
\end{equation}
We call the TRZ~$a$ \textit{primitive} if no proper sub-sum of the sum in~\eqref{etrz} vanishes; that is, if ${\varnothing \subsetneq I\subsetneq \{1,\ldots,s\}}$ then ${\sum_{i\in I}\xi_if_i(a)\lambda_i^a\ne 0}$.

\begin{proposition}
\label{pmar}
Let~$\K$ be as above (that is, a finitely generated field, containing ${\lambda_1,\ldots, \lambda_s}$ and the coefficient of the polynomials $f_1, \ldots, f_s$), and~$\Lambda$ the Chevalley-Bass number of~$\K$. 
Let~$a$ be a primitive TRZ of~$U$, and ${\xi_1, \ldots, \xi_s}$ roots of unity satisfying~\eqref{etrz}. Assume that ${s\ge 2}$.  Then the denominator of the rational number~$a$ divides~$\Lambda\rho$, where~$\rho$ is  defined in~\eqref{edefrho}; in particular, the denominator is bounded effectively in terms of ${d:=[\K_\ab:\Q]}$ and~$\rho$.  Moreover,  the orders of the roots of unity ${\xi_i/\xi_j}$ are effectively bounded in terms of~$d$,~$\rho$ and~$s$. 
\end{proposition}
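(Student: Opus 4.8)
The plan is to apply the Kummer-type result of Proposition~\ref{prkumlaur} to the vanishing sum~\eqref{etrz}, and then extract information about the denominator of~$a$ from the algebraic nature of the powers $\lambda_i^a$. First I would normalize: since $a$ is a primitive TRZ and $s\ge 2$, divide~\eqref{etrz} by one of its terms, say by $-\xi_sf_s(a)\lambda_s^a$, to rewrite it as $\beta_1+\cdots+\beta_{s-1}=1$ where $\beta_i=-\xi_if_i(a)\lambda_i^a/(\xi_sf_s(a)\lambda_s^a)$. Each $\beta_i$ lies in the division group $\Gamma$ of $\K^\times$: indeed, writing $A$ for the denominator of $a$, the quantity $\beta_i^A$ is $(\text{root of unity})\cdot(\text{element of }\K^\times)\cdot(\lambda_i/\lambda_s)^{Aa}$, and $Aa\in\Z$, so $\beta_i^{A\cdot(\text{something})}\in\K^\times$; more directly, $\beta_i$ is algebraic over $\K$ and a suitable power of it lies in $\K^\times$ (here I use that the $\lambda_i$ and the coefficients of the $f_j$ lie in $\K$). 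Primitivity of the TRZ guarantees that no proper sub-sum of $\beta_1+\cdots+\beta_{s-1}$ vanishes, so the hypotheses of Proposition~\ref{prkumlaur} are met.

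Proposition~\ref{prkumlaur} then yields roots of unity $\eta_1,\dots,\eta_{s-1}\in\overline{\K}$ with $\beta_i^\Lambda\eta_i\in\K$ for each~$i$, where $\Lambda$ is the Chevalley--Bass number of~$\K$. Unwinding the definition of $\beta_i$, and using that $f_i(a),f_s(a),\xi_i,\xi_s$ contribute only factors in $\K$ times roots of unity, this says $(\lambda_i/\lambda_s)^{\Lambda a}$ equals an element of $\K^\times$ times a root of unity, i.e.\ $(\lambda_i/\lambda_s)^{\Lambda a}\in\mu_{\overline\K}\K^\times$. Now I invoke Proposition~\ref{proot}: writing $A$ for the denominator of $a$ in lowest terms, the element $(\lambda_i/\lambda_s)^{\Lambda a}$ is an honest $A$-th root (up to sign/root-of-unity ambiguity) of $(\lambda_i/\lambda_s)^{\Lambda\cdot(\text{numerator})}$, which lies in $\K$; since it lies in $\mu_{\overline\K}\K^\times\subset\K(\zeta)$ but we actually want it \emph{in}~$\K$ up to a root of unity, Proposition~\ref{proot}(\ref{iharder}) forces $A/\gcd(\varrho_\K(\lambda_i/\lambda_s),\Lambda A/\!\gcd(\dots))$-type divisibility; the clean consequence is that $A$ divides $\Lambda\varrho_\K(\lambda_i/\lambda_s)$ for every pair, and hence $A\mid \Lambda\rho$ with $\rho$ as in~\eqref{edefrho}. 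The bound on $A$ is then effective in terms of $d$ because the Chevalley--Bass number~$\Lambda$ is (by the estimate recalled after Theorem~\ref{thbass}), and the dependence on $\rho$ is explicit.

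For the final assertion on the orders of the $\xi_i/\xi_j$: once the denominator of $a$ is known to divide $\Lambda\rho=:D$, fix compatible determinations of the $\lambda_i^a=\lambda_i^{a}$ as $D$-th roots, so that $\lambda_i^{Da}\in\K^\times$ is fixed and the ambiguity in $\lambda_i^a$ is a $D$-th root of unity. Dividing~\eqref{etrz} through by $\xi_sf_s(a)\lambda_s^a$ again and clearing the fixed $\K$-parts, one is left with an equation $\sum_{i=1}^{s}(\xi_i/\xi_s)\cdot\omega_i\cdot c_i=0$ (no term of which, nor any sub-sum, vanishes, by primitivity), where the $\omega_i$ are $D$-th roots of unity coming from the choice of powers and the $c_i\in\K^\times$ — equivalently an equation of the shape $\sum_j \alpha_j\theta_j=1$ with $\alpha_j\in\K^\times$ and $\theta_j$ roots of unity, with no vanishing sub-sum. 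Theorem~\ref{thdz} then bounds the order of the group generated by the $\theta_j$ effectively in terms of $d$ and $s$; since the $\xi_i/\xi_j$ differ from ratios of the $\theta_j$ only by $D$-th roots of unity and $D=\Lambda\rho$ is already bounded in terms of $d$ and $\rho$, the orders of the $\xi_i/\xi_j$ are bounded effectively in terms of $d$, $\rho$ and $s$, as claimed.

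The main obstacle I expect is the bookkeeping in the second paragraph: disentangling the root-of-unity ambiguities (from the choice of rational powers $\lambda_i^a$, from the $\eta_i$ produced by Proposition~\ref{prkumlaur}, and from the $\xi_i$ themselves) carefully enough to conclude a clean divisibility $A\mid\Lambda\varrho_\K(\lambda_i/\lambda_s)$ rather than something weaker, and then to pass from the pairwise statements to the gcd $\rho$. Applying Proposition~\ref{proot}(\ref{iharder}) correctly — with the right roles for $n$, the root, and the accompanying root of unity — is the delicate point; everything else is either a direct citation or routine manipulation.
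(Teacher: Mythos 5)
Your first paragraph and most of the second follow the paper's argument quite closely: normalize so the sum equals~$1$, check membership in~$\Gamma$, apply Proposition~\ref{prkumlaur} to get $(\lambda_i/\lambda_s)^{\Lambda a}\eta_i\in\K$, then feed this into Proposition~\ref{proot}(2) (with $\alpha=(\lambda_i/\lambda_s)^{\Lambda k}$ and $n=\ell$, where $a=k/\ell$) to force $\ell\mid\Lambda\varrho_\K(\lambda_i/\lambda_s)$ for each~$i$, and conclude $\ell\mid\Lambda\rho$ after noting that $\rho=\gcd_i\varrho_\K(\lambda_i/\lambda_s)$. That part is sound.

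The third paragraph has a genuine gap. You claim that after dividing by $\xi_sf_s(a)\lambda_s^a$ and ``clearing the fixed $\K$-parts'' one obtains $\sum_j\alpha_j\theta_j=1$ with $\alpha_j\in\K^\times$ and $\theta_j$ roots of unity. But $(\lambda_i/\lambda_s)^a$ is \emph{not}, in general, a root of unity times an element of~$\K$: what you have established is only that its $\Lambda$-th power lies in $\mu_{\overline\K}\K^\times$, and being a $D$-th root of an element of~$\K^\times$ does not put an element in $\mu_{\overline\K}\K^\times$ (think of $\sqrt2$ over~$\Q$). So the coefficients~$\alpha_j$ actually live in the larger field $\L:=\K\bigl((\lambda_1/\lambda_s)^a,\ldots,(\lambda_{s-1}/\lambda_s)^a\bigr)$, not in~$\K$, and the ``bound in terms of $d$ and $s$ alone'' you invoke for the~$\theta_j$ is not what Theorem~\ref{thdz} gives. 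The correct fix — and the one the paper uses — is to apply Theorem~\ref{thdz} directly to relation~\eqref{erelat} over~$\L$, with the $\xi_i$ themselves as the roots of unity, and to bound $[\L_\ab:\Q]$: since the denominator of~$a$ divides $\Lambda\rho$, one has $[\L:\K]\le(\Lambda\rho)^{s-1}$, hence $[\L_\ab:\Q]\le d(\Lambda\rho)^{s-1}$, and Theorem~\ref{thdz} then bounds the orders of the~$\xi_i$ in terms of $[\L_\ab:\Q]$ and~$s$, i.e.\ in terms of $d$, $\rho$, and~$s$. Your identification of ``the delicate point'' (the application of Proposition~\ref{proot}) is misplaced; that step is fine, and the real obstacle is the one above.
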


\begin{proof}
Since ${s\ge 2}$ and no proper sub-sum of the sum in~\eqref{etrz} vanishes, we have ${f_i(a)\ne 0}$ for ${i=1, \ldots, s}$. There will be no loss of generality to assume that ${\xi_s=1}$; so, instead of~\eqref{etrz} we have
\begin{equation}
\label{etrzone}
\xi_1f_1(a)\lambda_1^a+\cdots + \xi_{s-1}f_{s-1}(a)\lambda_{s-1}^a+ f_s(a)\lambda_s^a=0.  
\end{equation} 
Applying Proposition~\ref{prkumlaur} to the relation 
\begin{equation}
\label{erelat}
\sum_{i=1}^{s-1}-\xi_i\frac{f_i(a)}{f_s(a)}\left(\frac{\lambda_i}{\lambda_s}\right)^a=1, 
\end{equation}
we obtain the following: there exist  roots of unity ${\eta_1, \ldots, \eta_{s-1}}$ such that 
\begin{equation*}
\left({\lambda_i}/{\lambda_s}\right)^{\Lambda a}\eta_i\in \K \qquad (i=1, \ldots, s-1). 
\end{equation*}
Write  ${a=k/\ell}$, where~$k$ and~$\ell$ are co-prime integers. We want to show that ${\ell\mid \Lambda\rho}$. We have 
${\varrho_\K\bigl((\lambda_i/\lambda_s)^{\Lambda k}\bigr) = \Lambda k\varrho_\K(\lambda_i/\lambda_s)}$.
Proposition~\ref{proot} implies that the quotient 
${\ell/\gcd\bigl(\ell, \Lambda k\varrho_\K(\lambda_i/\lambda_s)\bigr)}$ divides the degree ${[\K((\lambda_i/\lambda_s)^{\Lambda a}\eta_i):\K]}$. But this degree is~$1$, which implies that ${\ell\mid \Lambda k\varrho_\K(\lambda_i/\lambda_s)}$. Since~$\ell$ and~$k$ are coprime, this implies that 
${\ell\mid \Lambda \varrho_\K(\lambda_i/\lambda_s)}$
for ${i=1, \ldots, s-1}$.

We have clearly
${\rho=\gcd\{\varrho_\K(\lambda_i/\lambda_s): i=1, \ldots, s-1\}}$. 
It follows that ${\ell\mid \Lambda \rho}$, which proves  the first statement of the proposition.

Now let us bound the orders of the roots of unity~$\xi_i$. 
Let~$\L$ be the field, generated over~$\K$ by ${(\lambda_1/\lambda_s)^a, \ldots,  (\lambda_{s-1}/\lambda_s)^a}$. Since the denominator of~$a$ divides $\Lambda \rho$, we have 
$$
[\L_\ab:\K_\ab]\le [\L:\K]\le (\Lambda \rho)^{s-1}.
$$
Hence ${[\L_\ab:\Q]\le d(\Lambda \rho)^{s-1}}$; in particular,  ${[\L_\ab:\Q]}$ is effectively bounded in terms of $d$,~$s$ and~$\rho$.

Applying Theorem~\ref{thdz} to relation~\eqref{erelat}, we bound the orders of the roots of unity $\xi_i$ in terms of ${[\L_\ab:\Q]}$ and~$s$. Hence it is bounded in terms of $d$,~$s$ and~$\rho$, as wanted. The proposition is proved.  
\end{proof}

Combining this proposition with the Skolem-Mahler-Lech Theorem, we obtain the following consequence.

\begin{corollary}
\label{cotrz}
Let~$U$ be a non-degenerate LRS over a field of characteristic~$0$. Then~$U$ admits at most finitely many primitive TRZs. More precisely, if~\eqref{ebin} is the Binet expansion of~$U$, then there exist at most finitely many $s$-tuples  $(a,\xi_1, \ldots, \xi_{s-1})$ such that~$a$ is a rational number, ${\xi_1, \ldots, \xi_{s-1}}$ are roots of unity, and~\eqref{etrzone} holds. 
\end{corollary}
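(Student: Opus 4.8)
The plan is to deduce the corollary from Proposition~\ref{pmar} together with the Skolem-Mahler-Lech theorem, by reducing everything to finitely many ``bookkeeping'' situations. First I fix a finitely generated field~$\K$ containing ${\lambda_1, \ldots, \lambda_s}$ and the coefficients of ${f_1, \ldots, f_s}$, let~$\Lambda$ be its Chevalley-Bass number, and let~$\rho$ be the quantity defined in~\eqref{edefrho}. I treat the trivial case ${s=1}$ separately: then the Binet expansion is ${U(n)=f_1(n)\lambda_1^n}$ with ${f_1\ne 0}$ and ${\lambda_1\ne 0}$, so a TRZ~$a$ would force ${f_1(a)=0}$, and as~$f_1$ is a non-zero polynomial this happens for at most finitely many rational~$a$ (and~$\xi_1$ is irrelevant, or one notes there is no relation to satisfy at all once ${f_1(a)\ne 0}$); in particular there are only finitely many such tuples. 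So from now on I assume ${s\ge 2}$.

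Next, for a primitive TRZ~$a$ with witnessing roots of unity ${\xi_1, \ldots, \xi_s}$, I normalise ${\xi_s=1}$ as in the proof of Proposition~\ref{pmar}, so the relevant data is an $s$-tuple $(a,\xi_1,\ldots,\xi_{s-1})$ satisfying~\eqref{etrzone}. Proposition~\ref{pmar} tells me two things at once: the denominator~$\ell$ of~$a$ divides $\Lambda\rho$, hence takes only finitely many values; and the orders of the roots of unity ${\xi_i/\xi_j}$ (in particular, since ${\xi_s=1}$, the orders of the~$\xi_i$ themselves) are bounded effectively in terms of~$d$, ${s}$ and~$\rho$. Consequently the tuples $(\xi_1,\ldots,\xi_{s-1})$ of roots of unity that can occur lie in a single \emph{finite} set~$\Xi$ (there are only finitely many roots of unity of bounded order in~$\overline{\K}$, and only finitely many choices of determinations ${\lambda_i^a}$ differing by such roots of unity once~$\ell$ is fixed). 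It therefore suffices to show: for each fixed denominator ${\ell\mid \Lambda\rho}$, each fixed tuple $(\xi_1,\ldots,\xi_{s-1})\in\Xi$, and each fixed choice of the $\ell$-th roots ${\lambda_i^{1/\ell}}$, there are only finitely many numerators~$k$ (equivalently, finitely many ${a=k/\ell}$) for which~\eqref{etrzone} holds.

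For this last step I rescale. Write ${a=k/\ell}$ and put ${n:=k}$, ${\mu_i:=\lambda_i^{1/\ell}}$ for the fixed determinations; then ${\lambda_i^a=\mu_i^{n}}$ and ${f_i(a)=f_i(n/\ell)}$, so~\eqref{etrzone} becomes
\begin{equation}
\label{ecorfin}
\sum_{i=1}^{s}\xi_i\, \tilf_i(n)\,\mu_i^{n}=0,
\end{equation}
where ${\tilf_i(X):=f_i(X/\ell)}$ is again a polynomial with coefficients in a finitely generated field, ${\xi_s=1}$, and~$n$ ranges over~$\Z$. The left-hand side of~\eqref{ecorfin} is the general term of an LRS~$V$ with values in the finitely generated field $\K(\mu_1,\ldots,\mu_s,\xi_1,\ldots,\xi_{s-1})$, whose characteristic roots are among the ${\mu_i}$. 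Crucially,~$V$ is \emph{non-degenerate}: if ${\mu_i/\mu_j}$ were a root of unity for some ${i\ne j}$ then ${(\lambda_i/\lambda_j)^{1/\ell}}$ would be a root of unity, hence so would ${\lambda_i/\lambda_j}$, contradicting non-degeneracy of~$U$. (One must also check that the terms $\xi_i\tilf_i(n)\mu_i^n$ with distinct~$i$ do not collapse onto a common root; if two of the~$\mu_i$ coincide one simply merges the corresponding polynomial terms, which does not affect non-degeneracy, and if after merging some~$\tilf_i$ vanished identically the sum~\eqref{etrz} would have had a vanishing proper sub-sum after all, contrary to primitivity — this is where primitivity is used a second time.) By the Skolem-Mahler-Lech theorem (Theorem~\ref{thsml}) the non-degenerate LRS~$V$ has only finitely many zeros, so~\eqref{ecorfin} holds for only finitely many ${n\in\Z}$, i.e.\ for only finitely many numerators~$k$. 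Taking the union over the finitely many choices of ${\ell\mid\Lambda\rho}$, of tuples in~$\Xi$, and of determinations of the roots, we conclude that only finitely many tuples $(a,\xi_1,\ldots,\xi_{s-1})$ with~$a$ rational and~\eqref{etrzone} holding exist, which is the assertion of the corollary.

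The main obstacle is the non-degeneracy verification for~$V$ in the last paragraph, and more precisely the bookkeeping needed to make~\eqref{ecorfin} genuinely satisfy the hypotheses of Skolem-Mahler-Lech: one has to handle possible coincidences among the~$\mu_i$ by merging terms, argue that the merged polynomial coefficients are still non-zero (invoking primitivity), and confirm that the rescaling ${X\mapsto X/\ell}$ does not alter the minimal order. Everything else — fixing~$\K$, extracting finiteness of denominators and of the orders of the~$\xi_i$ from Proposition~\ref{pmar}, and reducing to finitely many ``shapes'' of equation — is routine.
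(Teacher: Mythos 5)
Your proof is correct and follows essentially the same route as the paper: use Proposition~\ref{pmar} to bound the denominator of a primitive TRZ and the orders of the $\xi_i$, then rescale to an integer variable so that equation~\eqref{etrzone} becomes the vanishing of a non-degenerate LRS, and invoke Skolem-Mahler-Lech for each of the finitely many combinations of denominator, $\xi$-tuple and determination of the roots. One cosmetic remark: the ``merging'' worry in your last paragraph never materialises, since $\mu_i^\ell=\lambda_i$ and the $\lambda_i$ are distinct automatically forces the $\mu_i$ to be distinct, and the $\tilf_i$ are non-zero simply because the $f_i$ are; the paper also sidesteps iterating over divisors $\ell\mid\Lambda\rho$ by writing $a=n/\Lambda\rho$ with $n\in\Z$ (not assuming coprimality) and absorbing the choice of determination into explicit $\Lambda\rho$-th root of unity factors $\eta_i$, i.e.\ $\lambda_i^a=\theta_i^n\eta_i$, before applying Skolem-Mahler-Lech once for each of the finitely many tuples $(\xi_1,\ldots,\xi_{s-1},\eta_1,\ldots,\eta_s)$.
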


\begin{proof}
If ${s=1}$ then ${f_1(a)=0}$, which is possible only for finitely many~$a$.

From now on we assume that ${s\ge 2}$. Proposition~\ref{pmar} implies that ${a=n/\Lambda \rho}$, where ${n\in \Z}$, and that there are at most finitely many choices for $(\xi_1, \ldots, \xi_{s-1})$ in~\eqref{etrzone}. Pick some determinations ${\theta_i:=\lambda_i^{1/\Lambda \rho}}$, so that ${\lambda_i^a=\theta_i^n\eta_i}$, where $\eta_i$ are $\Lambda \rho^\tho$ roots of unity. Then
\begin{equation}
\label{eetag}
\xi_1\eta_1g_1(n)\theta_1^n+\cdots + \xi_{s-1}\eta_{s-1}g_{s-1}(n)\theta_{s-1}^n +\eta_sg_s(n)\theta_s^n=0,  
\end{equation}
where ${g_i(t):=f_i(t/\Lambda \rho)}$. 

The left-hand side of~\eqref{eetag} is a non-degenerate LRS, and the Skolem-Mahler-Lech Theorem implies that there can be at most finitely many~$n$ for every fixed choice of the roots of unity~$\xi_i$ and~$\eta_i$. Since there are at most finitely many choices for $(\xi_1, \ldots, \xi_{s-1})$ and for $(\eta_1, \ldots, \eta_{s})$, the result follows. 
\end{proof}

Now we are ready to complete the proof of Theorem~\ref{thfin}. Let~$a$ be a TRZ of~$U$, so that~\eqref{etrz} holds for some choice of roots of unity~$\xi_i$. 
Let~$I$ be a minimal non-empty subset of ${\{1, \ldots, s\}}$ such that ${\sum_{i\in I}\xi_if_i(a)\lambda_i^a= 0}$. Then~$a$ is a primitive TRZ of the LRS~$U_I$, defined by  
\begin{equation}
\label{eui}
U_I(n):=\sum_{i\in I}f_i(n)\lambda_i^n. 
\end{equation}
Corollary~\ref{cotrz} tells us that~$U_I$ may have at most finitely many primitive TRZs. Since there are finitely many possible~$I$, Theorem~\ref{thfin} is proved.

\subsection{An explicit result for $\Q$-valued linear recurrence sequences}
\label{ssq}

Let~$U$ be an LRS over a field of characteristic~$0$ with Binet expansion~\eqref{ebin}, and let~$a$ be a rational number. If~$a$ is a common root of the polynomials ${f_1, \ldots, f_s}$, then it is, clearly, a rational zero of~$U$.  Such rational zeros will be called \textit{trivial}; there are only finitely many of them, and in many interesting cases (for instance, if at least one of ${f_1, \ldots, f_s}$ is constant) there are none.

Arguing as at the end of Subsection~\ref{ssprfin}, we obtain the following: the denominator of a non-trivial TRZ~$a$ of~$U$ is bounded in terms of~$d$,~$\rho$ and~$s$. Indeed, if ${f_i(a)\ne 0}$ for some~$i$, then there exists a set ${I\subset \{1,\ldots, s\}}$ having at least~$2$ elements such that the LRS~$U_I$, defined in~\eqref{eui}, has~$a$ as a primitive TRZ.  Now Proposition~\ref{pmar} implies that the denominator of~$a$ is bounded in terms of~$d$,~$\rho$ and~$s$.

Note that ${s\le m}$, where~$m$ denoted the order of the LRS~$U$. Hence the denominator of~$a$ is bounded in terms of~$d$,~$\rho$ and~$m$.

In the most interesting special case when~$U$ is a $\Q$-valued LRS of order~$m$, we can take~$\K$ in Proposition~\ref{pmar} as the splitting field of the characteristic polynomial of~$U$. With this choice of~$\K$, the degree~$d$ is bounded in terms of~$m$. Hence the denominator of~$a$ is bounded in terms of~$m$ and~$\rho$. We are going to make it totally explicit.

\begin{proposition}
Let~$U$ be a $\Q$-valued LRS of order~$m$ with Binet expansion~\eqref{ebin},~$\K$  the splitting field of the characteristic polynomial of~$U$, and~$\rho$ as in~\eqref{edefrho}. Let~$a$ be a non-trivial TRZ of~$U$. Then the denominator of~$a$ does not exceed ${\rho\exp\exp(m/\log m)}$. 
\end{proposition}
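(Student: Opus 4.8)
We already know from Proposition~\ref{pmar} (applied, as explained in Subsection~\ref{ssq}, to an appropriate sub-sum~$U_I$ with ${|I|\ge 2}$) that for a non-trivial TRZ~$a$ the denominator divides ${\Lambda\rho}$, where~$\Lambda$ is the Chevalley-Bass number of~$\K$. So the whole task reduces to bounding~$\Lambda$ explicitly in terms of~$m$; everything else is already in place. The route is: bound ${d:=[\K_\ab:\Q]}$ in terms of~$m$, then feed this into the explicit estimate ${\Lambda\le\exp(d^{2/\log\log d})}$ quoted (for ${d\ge 3}$) after Theorem~\ref{thbass}, and finally simplify the resulting expression to the clean form ${\rho\exp\exp(m/\log m)}$.

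\textbf{Step 1: bounding the degree of~$\K$.} Since~$U$ is a $\Q$-valued LRS of order~$m$, its characteristic polynomial~\eqref{echarpol} has degree~$m$ and lies in~$\Q[X]$, so its splitting field~$\K$ satisfies ${[\K:\Q]\mid m!}$, hence ${[\K:\Q]\le m!}$. A fortiori ${d=[\K_\ab:\Q]\le m!}$. (If one wanted a sharper bound one could note that~$\Gal(\K/\Q)$ is a transitive subgroup of~$S_m$ and its abelianization has order at most, say, the largest order of an abelian quotient of a transitive subgroup of~$S_m$; but $d\le m!$ is amply sufficient for the stated conclusion.)

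\textbf{Step 2: bounding~$\Lambda$ and assembling.} For ${d\ge 3}$ we have ${\Lambda\le\exp(d^{2/\log\log d})}$ by the estimate recalled after Theorem~\ref{thbass}; for ${d\le 2}$ one checks directly that~$\Lambda$ is a small absolute constant, so the inequality to be proved holds trivially in that range. For ${d\ge 3}$, substituting ${d\le m!}$ and using ${\log\log(m!)\asymp\log(m\log m)\asymp\log m}$ together with ${\log(m!)\le m\log m}$, one gets
$$
d^{2/\log\log d}=\exp\!\left(\frac{2\log d}{\log\log d}\right)\le\exp\!\left(\frac{2\log(m!)}{\log\log(m!)}\right)\le\exp\!\left(\frac{m}{\log m}\right)
$$
for all sufficiently large~$m$, after absorbing the constant~$2$ into the slack between ${2\log(m!)/\log\log(m!)\sim 2m}$ and ${m\log m/\log m=m}$ — more precisely, ${2\log(m!)\le 2m\log m}$ and ${\log\log(m!)\ge\log\log m}$, so the exponent is at most ${2m\log m/\log\log m\le m\log m=\exp(\log m)\cdot m/\log m\cdot\log m}$; one then checks the finitely many small~$m$ by hand. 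Hence ${\Lambda\le\exp\exp(m/\log m)}$, and combining with the divisibility ${\ell\mid\Lambda\rho}$ from Proposition~\ref{pmar} gives ${\ell\le\Lambda\rho\le\rho\exp\exp(m/\log m)}$, as claimed.

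\textbf{Main obstacle.} The only genuine content is the bookkeeping in Step~2: verifying that the crude bound ${d\le m!}$, once pushed through ${\Lambda\le\exp(d^{2/\log\log d})}$, really does collapse to ${\exp\exp(m/\log m)}$ rather than something larger. The point is that the double exponential in ${\Lambda\le\exp\exp\!\big(\tfrac{2\log d}{\log\log d}\big)}$ has inner argument only ${O(\log d/\log\log d)}$, which with ${d\le m!}$ is ${O(m\log m/\log m)=O(m)}$ — comfortably below ${m/\log m}$ once we are willing to lose the constant, but one must be careful that "${m/\log m}$" in the statement is the inner argument of the \emph{outer} exponential, i.e.\ we need ${\log\Lambda\le\exp(m/\log m)}$, and ${\log\Lambda\le\exp\!\big(\tfrac{2\log(m!)}{\log\log(m!)}\big)}$ with ${\tfrac{2\log(m!)}{\log\log(m!)}\le m/\log m}$ for large~$m$ (since the left side is ${\sim 2m/\log\log m}$, which is indeed ${o(m/\log m)}$). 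The remaining small values of~$m$ are finite in number and checked directly. Everything else is immediate from results already proved in the excerpt.
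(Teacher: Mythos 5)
Your Step 1 reduction and Step 2 plan are sound in outline, but the numerical bookkeeping in Step 2 fails, and the failure is not merely a matter of ``checking small $m$.'' With the crude bound ${d\le m!}$ one has ${\log d\le\log(m!)\sim m\log m}$ and ${\log\log d\sim\log(m\log m)\sim\log m}$, so
$$
\frac{2\log d}{\log\log d}\sim\frac{2m\log m}{\log m}=2m ,
$$
not ${\sim 2m/\log\log m}$ as you wrote; and in any case ${2m}$ (or even ${2m/\log\log m}$) is \emph{much larger} than ${m/\log m}$, so the chain ${\log\log\Lambda\le 2\log d/\log\log d\le m/\log m}$ simply does not hold for large~$m$. (Your intermediate line ``${\le m\log m=\exp(\log m)\cdot m/\log m\cdot\log m}$'' is also wrong: the right-hand side equals ${m^2}$, not ${m\log m}$.) The upshot is that the elementary estimate ${d\le m!}$ is far too lossy; it would only yield something like ${\Lambda\le\exp\exp(Cm)}$, a qualitatively weaker bound than the ${\exp\exp(m/\log m)}$ claimed.

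The paper avoids this entirely: after reducing to a primitive TRZ of a sub-sum $U_I$ and invoking Proposition~\ref{pmar}, it does not push ${d\le m!}$ through ${\Lambda\le\exp(d^{2/\log\log d})}$, but instead cites \cite[Proposition~6.5]{Bi23}, which gives the bound ${\Lambda\le\exp\exp(m/\log m)}$ directly for the splitting field of a degree-$m$ polynomial over~$\Q$. That proposition necessarily rests on a substantially better bound on ${d=[\K_\ab:\Q]}$ than $m!$ (exploiting that $\Gal(\K/\Q)$ is a transitive subgroup of $S_m$ and controlling its abelianization), and this sharper input is precisely what your argument is missing. To repair your proof you would need such a bound — roughly ${\log d=O(m^{1-\varepsilon})}$ or better — not ${\log d\le m\log m}$.
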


\begin{proof}
As we have just seen, if~$a$ is a non-trivial TRZ of~$U$, then there exists ${I\subset \{1, \ldots, s\}}$ with ${\#I\ge 2}$ such that~$a$ is a primitive TRZ of~$U_I$. Proposition~\ref{pmar} implies that the denominator of~$a$ is bounded by $\rho\Lambda$. And we have ${\Lambda \le \exp\exp(m/\log m)}$, see \cite[Proposition~6.5]{Bi23}. This completes the proof. 
\end{proof}

\section{Twisted rational zeros of the Tribonacci sequence}
\label{stribo}
In this section we prove Theorem~\ref{thtribo}. As in the proof of Theorem~\ref{thfin}, the principal part is bounding the denominators of the TRZ, see Proposition~\ref{prdenom}. Instead of adapting the general argument of Section~\ref{sfin}, using the Theorems of Chevalley-Bass and of Dvornicich-Zannier, we  use an elementary ad hoc argument.

In this section we  denote by ${\overline{\Q}\subset\C}$  the field of all complex algebraic numbers, and by ${x\mapsto\overline{x}}$ the complex conjugation. 

\subsection{The roots}

Let ${\lambda_1,\lambda_2, \lambda_3\in \overline{\Q}}$ be the complex roots of the characteristic polynomial 
$$
{P(X):=X^3-X^2-X-1}.
$$
Then
$$
T(n) =\alpha_1\lambda_1^n+\alpha_2\lambda_2^n+\alpha_3\lambda_3^n, \qquad \alpha_i:= P'(\lambda_i)^{-1}\lambda_i. 
$$
One of the roots ${\lambda_1, \lambda_2, \lambda_3}$  is real and the other two are complex conjugate. We will assume that
${\lambda_1\in \R}$ and ${\lambda_3=\overline{\lambda_2}}$.

We denote ${\K_i:=\Q(\lambda_i)}$ and  ${\L:=\Q(\lambda_1,\lambda_2)}$, the splitting field of~$P$. We have 
${[\K_i:\Q]=3}$ and ${[\L:\Q]=6}$. The 
maximal abelian subfield $\L_\ab$ is $\Q(\sqrt{-11})$, because the discriminant of~$P$ is $-44$. The $6$~numbers 
\begin{equation}
\label{equotients}
\lambda_i/\lambda_j \qquad (1\le i\ne j\le 3)
\end{equation}
form a full Galois orbit over~$\Q$; in particular, for ${i\ne j}$ we have ${\L=\Q(\lambda_i/\lambda_j)}$.

In the following proposition we collect some  less obvious properties of the roots $\lambda_i$, to be used later. 
\begin{proposition}
\label{prlami}
\begin{enumerate}

\item
\label{inotunits}
For ${i\ne j}$, the quotient ${P'(\lambda_i)/P'(\lambda_j)}$ is not a Dirichlet unit. 

\item
\label{ikumexp}
Let~$\lambda$ be one of ${\lambda_1,\lambda_2,\lambda_3}$ and ${\K:=\Q(\lambda)}$. 
Then~$\lambda$ is not an $m^\tho$ power in~$\K$ for any integer ${m>1}$, and neither is ${-\lambda}$. In terms of the Kummer exponent, defined in Subsection~\ref{sspows}, this can be stated as  ${\varrho_{\K}(\lambda)=1}$. 

\item
\label{icubes}
The quotients $\lambda_i/\lambda_j$ are cubes in~$\L$. More precisely, for ${1\le i,j\le 3}$ there exists a unique ${\theta_{ij}\in \L}$ such that ${\lambda_i/\lambda_j=\theta_{ij}^3}$.

\end{enumerate}
\end{proposition}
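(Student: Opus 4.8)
The three items are statements about the roots $\lambda_1,\lambda_2,\lambda_3$ of $P(X)=X^3-X^2-X-1$, and all three are essentially number-theoretic facts about the sextic field $\L=\Q(\lambda_1,\lambda_2)$ (with cubic subfields $\K_i=\Q(\lambda_i)$) that can be checked by an explicit computation in $\L$. The strategy for each item is to reduce the claim to a finite verification about ideals or units, using only the facts already established in the paper: $[\K_i:\Q]=3$, $[\L:\Q]=6$, $\L_\ab=\Q(\sqrt{-11})$, the discriminant of $P$ is $-44$, the six quotients $\lambda_i/\lambda_j$ form a single Galois orbit, and $\lambda_i$ is a unit (since $P(0)=-1$).

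\textbf{Item~\ref{inotunits}.} I would argue by the theory of $S$-units / prime factorization. The quantity $P'(\lambda_i)$ generates, up to units, the different of $\K_i/\Q$; concretely $\prod_i P'(\lambda_i) = \pm\mathrm{disc}(P) = \mp 44 = \mp 2^2\cdot 11$, and the discriminant $-44$ tells us the rational prime $11$ ramifies in $\K_i$. So at least one of the primes above $11$ in $\L$ divides some $P'(\lambda_i)$ to a nonzero order; tracking the factorization of $(2)$ and $(11)$ in $\L$ and applying a Galois automorphism that moves $\lambda_i\mapsto\lambda_j$, one sees $P'(\lambda_i)/P'(\lambda_j)$ has a nonzero valuation at some prime of $\L$ above $2$ or $11$, hence is not a unit. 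The cleanest route is: factor the principal ideals $(P'(\lambda_i))$ in $\L$ (they are supported only at primes over $2$ and $11$), observe the factorizations are genuinely different for different $i$ because the six quotients $\lambda_i/\lambda_j$ form one Galois orbit of size $6>1$, and conclude the quotient ideal is nontrivial.

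\textbf{Item~\ref{ikumexp}.} Since $[\K:\Q]=3$ is not divisible by $2$ or $3$, if $\lambda=\beta^m$ with $m>1$ and $\beta\in\K$ then $\K=\Q(\beta)$ forces $m\mid 3$... wait, more carefully: $\lambda$ is a unit, $\K$ is a non-Galois cubic, and $\mu_\K=\{\pm1\}$. If $\lambda\in\K^m$ with $m>1$, take a prime $m$; then $m=2$ or $m=3$. For $m=2$: $\Q(\sqrt\lambda)$ would be a degree-$2$ extension of $\K$ inside... but one checks $\K$ has no such sub-behaviour compatibly — actually the slick argument is via Lemma~\ref{llang}: $X^m-\lambda$ is irreducible over $\K$ unless $\lambda\in\K^p$ for some $p\mid m$, so it suffices to rule out $\lambda\in\K^2$ and $\lambda\in\K^3$. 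Rule out $m=3$ because $\K/\Q$ is cubic and primitive, so $\K=\Q(\lambda)=\Q(\lambda^{1/3})$ would make $\lambda^{1/3}$ a root of an irreducible cubic whose cube is the cubic for $\lambda$ — comparing these two cubics over $\Q$ gives a contradiction (e.g. via norms: $N_{\K/\Q}(\lambda)=-1$ is not a cube of a rational integer norm unless that norm is $-1$, which is fine, so I instead use: $\lambda^{1/3}$ would generate $\K$, and then $\lambda=(\lambda^{1/3})^3$ together with $\lambda^3-\lambda^2-\lambda-1=0$ gives a polynomial relation of degree $9$ that the minimal polynomial of $\lambda^{1/3}$ must divide, but a primitive cubic $\gamma$ with $\gamma^9$ satisfying that relation is ruled out by reducing mod a small prime). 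Rule out $m=2$ similarly, or simply: $\Q(\lambda^{1/2})$ has degree $6$ over $\Q$, it is Galois over $\Q$ only if... — actually the honest and safe thing is a direct numerical/modular check, e.g. reduce $P$ mod a prime $q$ that is inert in $\K$ (so $q$ splits as a single prime of residue degree $3$) and observe $\lambda\bmod q$ is a non-square (resp. non-cube) in $\F_{q^3}$. The same applies verbatim to $-\lambda$. Then $\varrho_\K(\lambda)=1$ follows since the only roots of unity in $\K$ are $\pm1$. I expect this item to be the \textbf{main obstacle}: there are a few cases and the cleanest writeup requires choosing the right auxiliary prime and being careful that the chosen argument covers both $\lambda$ and $-\lambda$ and both exponents $2$ and $3$ uniformly.

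\textbf{Item~\ref{icubes}.} From item~\ref{ikumexp} one knows $\lambda_i$ is not a cube in $\K_i$, but over the larger field $\L$ things change. Since $[\L:\K_i]=2$, the issue is whether $\lambda_i/\lambda_j$ acquires a cube root in $\L$. I would proceed as follows: $\Gal(\L/\Q)\cong S_3$ acts transitively on the six quotients $\lambda_i/\lambda_j$; pick one, say $\mu:=\lambda_1/\lambda_2$, and show $\mu\in\L^3$ by exhibiting an explicit cube root — concretely, using the real embedding one has $\lambda_1\lambda_2\lambda_3=1$ (constant term), so $\lambda_i/\lambda_j = \lambda_i^2\lambda_j^{-2}\cdot(\lambda_i\lambda_j)^{... }$; more usefully, $\lambda_1/\lambda_2 = \lambda_1^2\lambda_3$ since $\lambda_1\lambda_2\lambda_3=1$. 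That doesn't obviously display a cube; instead I would verify directly that the ideal $(\lambda_i/\lambda_j)$ is the cube of an ideal in $\L$ (it is trivially so, being the unit ideal) and that the obstruction lies in the unit group: the claim $\lambda_i/\lambda_j\in\L^3$ is equivalent to $\lambda_i/\lambda_j$ being a cube in the unit group $\OO_\L^\times$ modulo torsion, i.e. a statement about the rank-? unit lattice of $\L$. By Dirichlet, $\OO_\L^\times$ has rank $r_1+r_2-1 = 0+3-1 = 2$ (one real place from $\lambda_1$, ... wait $\L$ has $\lambda_1$ real but $\L$ is degree $6$ — $\L$ has $r_1$ real embeddings and $r_2$ complex pairs with $r_1+2r_2=6$; since $\L\supset\Q(\sqrt{-11})$ is not totally real, and contains the real cubic $\K_1$, one gets $r_1=0,r_2=3$, rank $2$). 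So I would exhibit a fundamental system of units of $\L$ and check $\lambda_i/\lambda_j$ is three times a lattice vector; a cleaner approach avoiding full unit computation: show $\mu:=\lambda_1/\lambda_2$ can be written as $\theta^3$ with $\theta = $ an explicit algebraic number, by noting that $\mu$ and $\sqrt{-11}$ generate $\L$ and simply computing $\theta = $ a root of $X^3-\mu$ and checking $[\L(\theta):\L]=1$ via resultants — i.e. verify that $X^3 - \lambda_1/\lambda_2$ has a root in $\L$ by an explicit resultant/factorization computation, which is a finite check. Uniqueness of $\theta_{ij}$ is immediate: two cube roots differ by a cube root of unity, and $\mu_\L=\{\pm1\}$ (since $\L_\ab=\Q(\sqrt{-11})$ contains no cube roots of unity other than $1$), so the cube root in $\L$ is unique when it exists. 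Finally, Galois-equivariance of the construction propagates the single verified case to all six quotients, and $\theta_{ii}=1$, $\theta_{ji}=\theta_{ij}^{-1}$ handle the rest.

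Throughout, every nontrivial assertion is a finite computation in $\L$ (ideal factorizations of $2$ and $11$, a unit-group or resultant computation, and reductions modulo one or two small primes), so the write-up amounts to organizing these checks and invoking Lemma~\ref{llang} and the transitivity of the Galois action on the $\lambda_i/\lambda_j$ to minimize the number of cases.
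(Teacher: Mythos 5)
Item~\ref{ikumexp} has a genuine gap. You assert ``take a prime $m$; then $m=2$ or $m=3$'' without justification, and it is not clear how you would exclude all larger primes; your fall-back of reducing modulo a prime inert in $\K$ can only test one exponent at a time and so needs an a priori bound on $m$ to terminate. The paper supplies exactly such a bound via heights: since $\lambda_1>1$ and $|\lambda_2|=|\lambda_3|<1$, one has $3\height(\lambda)=\log\lambda_1<0.61$, while Smyth's theorem gives $3\height(\gamma)\ge\log\vartheta>0.28$ for any degree-$3$ algebraic integer $\gamma$ that is not a root of unity (here $\vartheta$ is the real root of $X^3-X-1$; note that irreducible cubics are automatically non-reciprocal). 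If $\lambda=\gamma^m$ then $m=\height(\lambda)/\height(\gamma)<2.2$, so only $m=2$ remains, and that is excluded because $P(X^2)$ is irreducible over $\Q$; the same works for $-\lambda$. Some quantitative input of this kind is indispensable, and its absence is why your argument cannot close.

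Item~\ref{inotunits} also has a gap, though a smaller one. Your stated justification that the ideals $(P'(\lambda_i))$ differ ``because the six quotients $\lambda_i/\lambda_j$ form one Galois orbit of size $6>1$'' is a non sequitur: the size of that Galois orbit says nothing about whether two particular principal ideals coincide. Your fall-back of explicitly factoring $(2)$ and $(11)$ in $\OO_\L$ would in principle work, but the paper has a much cleaner route that needs no factorization at all: if the three $P'(\lambda_i)$ generated the same ideal $I$, then $I$ would divide $\sum_i P'(\lambda_i)=4$, forcing $\prod_i P'(\lambda_i)$ to be a power of~$2$; but that product is $44$.

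Item~\ref{icubes} in your second (resultant) incarnation is essentially the paper's method: verify that $X^3-\lambda_i/\lambda_j$ acquires a root over $\L$ by an explicit resultant/factorization computation, and obtain uniqueness from $\zeta_3\notin\L$. Your parallel suggestion of working in the rank-$2$ unit lattice of $\OO_\L^\times$ is correct in principle but would require computing a fundamental system of units, which the resultant route avoids.
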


\begin{proof}
If, say, ${P'(\lambda_1)/P'(\lambda_2)}$ is a unit, then so is every ${P'(\lambda_i)/P'(\lambda_j)}$. It follows that the $3$~algebraic integers $P'(\lambda_i)$ generate the same  principal ideal in~$\OO_\L$. This ideal must divide the sum
$
\sum_{i=1}^3 P'(\lambda_i)=4 
$,
which implies that the product 
$
\prod_{i=1}^3 P'(\lambda_i) 
$
must be a power of~$2$. But 
\begin{equation}
\label{eff}
\prod_{i=1}^3 P'(\lambda_i)=44, 
\end{equation}
a contradiction. This proves item~\ref{inotunits}.

In the proof of item~\ref{ikumexp} we will use the notion of the \textit{absolute logarithmic height} $\height(\cdot)$ of an algebraic number. The definition can be found in many sources, say, in \cite[Section~1.5.7]{BG06}. We will need only the following properties:  if~$\gamma$ is an algebraic integer of degree~$d$ with conjugates ${\gamma_1, \ldots, \gamma_d \in \C}$, and~$m$ is a positive integer, then 
$
d\height(\gamma) =\sum_{i=1}^d\max\{\log|\gamma_i|,0\} 
$,
and ${\height(\gamma^m)=m\height(\gamma)}$. In particular, 
$$
3\height(\lambda) =\log\lambda_1 <0.61, 
$$
because 
${\lambda_1>1}$ and ${|\lambda_2|=|\lambda_3|<1}$.

Now assume that ${\lambda=\gamma^m}$ for some ${\gamma \in \K}$ and ${m>1}$. Then ${\height(\lambda)=m\height(\gamma)}$. On the other hand, the famous result of Smyth~\cite{Sm71} implies that 
$$
3\height(\gamma)\ge \log \vartheta >0.28. 
$$ 
where~$\vartheta$ is the real root of the polynomial ${X^3-X-1}$. Hence 
$$
m\le \frac{\log\lambda}{\log\vartheta} < 2.2. 
$$
It follows that ${m=2}$. Hence~$\gamma$ is a root of the polynomial ${P(X^2)}$. However, this polynomial is irreducible over~$\Q$, which means that~$\gamma$ is of degree~$6$, a contradiction. 
In a similar fashion one shows that $-\lambda$ is not a proper power in~$\K$.  This proves item~\ref{ikumexp}.

In item~\ref{icubes} uniqueness is clear, because ${\zeta_3\notin \L}$, so we only have to prove existence. 
Using \textsf{PARI}~\cite{pari} (or another similar tool), we calculate the $X$-resultant of the polynomials $P(X)$ and $P(XY)$. It is a polynomial in~$Y$ of degree~$9$, whose roots are exactly the $9$~quotients $\lambda_i/\lambda_j$. It has a root~$1$ of multiplicity~$3$, which corresponds to the $3$~quotients $\lambda_i/\lambda_i$, and it factors as ${(Y-1)^3R(Y)}$, where 
$$
R(Y):=Y^6 + 4Y^5 + 11Y^4 + 12Y^3 + 11Y^2 + 4Y + 1
$$
is the irreducible polynomial whose roots are the quotients~\eqref{equotients}.

Polynomial $R(Y^3)$ is reducible over~$\Q$: it has an irreducible factor of degree~$6$
$$
Q(Y):= Y^6 + Y^5 + 2Y^4 + 3Y^3 + 2Y^2 + Y + 1
$$
and another irreducible factor of degree~$12$. Let~$\theta$ be a root of~$Q$.  Then the field $\Q(\theta)$ is of degree~$6$, and $\theta^3$ is one of the quotients~\eqref{equotients}; in particular, $\Q(\theta)$ contains~$\L$. By equality of degrees we obtain ${\Q(\theta)=\L}$, which completes the proof of item~\ref{icubes}. 
\end{proof}


\subsection{The denominator}
In this subsection we prove that the denominator of a TRZ divides~$3$.

\begin{proposition}
\label{prdenom}
Let~$a$ be a TRZ of the Tribonacci LRS\@. Then ${3a\in \Z}$. 
\end{proposition}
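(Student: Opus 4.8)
The plan is to specialize the general machinery of Section~\ref{sfin}, and in particular Proposition~\ref{pmar}, to the concrete arithmetic of the Tribonacci sequence, using the facts about the roots collected in Proposition~\ref{prlami}. Recall that the Binet expansion of~$T$ is ${T(n)=\alpha_1\lambda_1^n+\alpha_2\lambda_2^n+\alpha_3\lambda_3^n}$ with all ${\alpha_i\ne 0}$ and all three polynomials $f_i$ constant; hence there are no trivial rational zeros, and any TRZ~$a$ satisfies a relation ${\xi_1\alpha_1\lambda_1^a+\xi_2\alpha_2\lambda_2^a+\xi_3\alpha_3\lambda_3^a=0}$ for suitable roots of unity~$\xi_i$ and some determination of the rational powers. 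Passing to a minimal vanishing sub-sum, either the sub-sum has two terms or three. In the two-term case, say ${\xi_i\alpha_i\lambda_i^a+\xi_j\alpha_j\lambda_j^a=0}$, we get ${(\lambda_i/\lambda_j)^a=-\xi_j\xi_i^{-1}\alpha_j/\alpha_i}$, so $(\lambda_i/\lambda_j)^a$ is, up to a root of unity, a ratio of the $\alpha_i$'s; since ${\alpha_i=P'(\lambda_i)^{-1}\lambda_i}$, item~\ref{inotunits} of Proposition~\ref{prlami} says ${P'(\lambda_i)/P'(\lambda_j)}$ is not a unit, and item~\ref{ikumexp} controls the Kummer exponent of $\lambda_i/\lambda_j$. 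I would argue that taking absolute values (using that $|\lambda_1|>1>|\lambda_2|=|\lambda_3|$) forces $i,j$ to be the two complex conjugate indices, and then the modulus of $(\lambda_2/\lambda_3)^a$ is~$1$ automatically while its argument constrains~$a$; combined with the fact that $\alpha_2/\alpha_3$ is \emph{not} a root of unity (its modulus is $|P'(\lambda_3)/P'(\lambda_2)|=1$ but $\lambda_2/\lambda_3$ is a primitive element of~$\L$, not a root of unity, by the resultant computation), this case yields no TRZ at all, or is handled directly.

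The main case is the three-term primitive relation, which is exactly the hypothesis of Proposition~\ref{pmar} with ${s=3}$. The key input is that the relevant Kummer exponent is small: from item~\ref{icubes} of Proposition~\ref{prlami}, each $\lambda_i/\lambda_j$ is a cube in~$\L$ via a unique $\theta_{ij}$, and from item~\ref{ikumexp} together with the structure of the resultant factorization one checks that ${\varrho_\L(\lambda_i/\lambda_j)=3}$ — the quotient is a cube but not a higher power, and not a root of unity times a higher power, since e.g. adjoining a sixth root would force degree larger than~$6$. Hence the quantity~$\rho$ of~\eqref{edefrho} equals~$3$. It remains to pin down the Chevalley–Bass number~$\Lambda$ of~$\L$, or rather to show that in this specific situation the denominator of~$a$ divides~$3$ rather than merely ${3\Lambda}$. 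Here I would not invoke the general bound ${\Lambda\le\exp\exp(d/\log d)}$ — that gives something far too weak — but instead run the argument of Proposition~\ref{pmar} by hand: write ${a=k/\ell}$ in lowest terms, apply Proposition~\ref{prkumlaur} (or re-derive its conclusion directly for $\L$ using that $\L_\ab=\Q(\sqrt{-11})$ has small degree, so the relevant Chevalley–Bass number is~$2$, and then combine with the fact that $\mu_\L=\{\pm1\}$), to conclude that $(\lambda_i/\lambda_j)^{a}$ times a root of unity lies in~$\L$; then Proposition~\ref{proot}\ref{iharder} forces ${\ell/\gcd(\ell, k\varrho_\L(\lambda_i/\lambda_j))}$ to divide ${[\L((\lambda_i/\lambda_j)^a\eta):\L]=1}$, so ${\ell\mid 3k}$, hence ${\ell\mid 3}$ by coprimality.

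The step I expect to be the main obstacle is controlling the factor~$\Lambda$: the abstract statement of Proposition~\ref{pmar} only bounds the denominator by ${\Lambda\rho}$, and one needs to know that for the specific field ${\L=\Q(\lambda_1,\lambda_2)}$ the contribution of~$\Lambda$ is trivial (i.e. that one may take the effective constant to be~$1$ in the relevant range, or absorb a factor~$2$ and rule it out). Concretely, the Chevalley–Bass number of~$\L$ depends on ${[\L_\ab:\Q]=2}$; one must either cite the precise value for small degree from~\cite{Bi23} or give the short direct argument that any element of~$\L$ that becomes an $n^\tho$ power after adjoining $\zeta_n$ is already, up to sign, an $n^\tho$ power in~$\L$ when applied to our particular elements $\lambda_i/\lambda_j$ — using that these generate~$\L$ and that their only roots of unity in sight are $\pm1$. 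Once the $\Lambda$ contribution is neutralized, the rest is the bookkeeping above: minimal sub-sum, the two-term case dispatched by heights and conjugation, the three-term case by Laurent's Kummer property plus Proposition~\ref{proot}, all feeding on ${\varrho_\L(\lambda_i/\lambda_j)=3}$ to give ${3a\in\Z}$.
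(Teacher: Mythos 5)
Your route is genuinely different from the paper's: the paper explicitly sidesteps the Chevalley--Bass/Laurent machinery of Section~\ref{sfin} (it says so at the start of Section~\ref{stribo}) and instead runs an ad hoc Galois argument. Concretely, after normalizing so that $\lambda_1^{1/m}\lambda_2^{1/m}\lambda_3^{1/m}=1$ and (via Lemma~\ref{lstrange}) writing the TRZ relation in the form $\alpha_1\lambda_1^a + \alpha_2\lambda_2^a\eta + \alpha_3\lambda_3^a\overline{\eta}=0$, one applies an arbitrary $\sigma\in\Gal(\overline{\Q}/\K_1)$, writes $(\lambda_1^{1/m})^\sigma=\lambda_1^{1/m}\xi_1$ with $\xi_1\in\mu_m$, and compares the conjugated relation with the original using the \emph{uniqueness} clause of Lemma~\ref{lstrange}; combined with the product constraint $\xi_1\xi_2\xi_3=1$ this forces $\xi_1^3=1$ for every $\sigma$, hence $[\K_1(\lambda_1^{1/m}):\K_1]\in\{1,3\}$, and item~\ref{ikumexp} of Proposition~\ref{prlami} together with Lemma~\ref{llang} then forces $m\in\{1,3\}$.

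Your proposal, by contrast, specializes Proposition~\ref{pmar}, and the gap is exactly where you flag it. That proposition bounds the denominator by $\Lambda\rho$ with $\Lambda$ the Chevalley--Bass number of $\L$; your concluding step ``so $\ell\mid 3k$, hence $\ell\mid 3$'' silently sets $\Lambda=1$. You acknowledge this is the obstacle but do not close it: you neither compute $\Lambda$ for $\L$ nor supply the ``short direct argument'' you gesture at, and even granting $\Lambda=2$ you would only get $\ell\mid 6$, so $\ell\in\{2,6\}$ would have to be excluded by a further argument that is not present. A second, smaller gap is the assertion $\varrho_\L(\lambda_i/\lambda_j)=3$: item~\ref{icubes} gives a multiple of~$3$, but to prevent a parasitic factor of~$2$ you must rule out that $\lambda_i/\lambda_j$ is $\pm$ a sixth power in $\L$, and this is not verified. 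Finally, the two-term sub-sum case you set up is vacuous for the Tribonacci sequence: since all three $\alpha_i\lambda_i^a$ are nonzero, a vanishing two-term sub-sum would force the remaining single term to vanish, which is impossible; so every TRZ relation for $T$ is automatically primitive and that branch of the argument can be dropped.
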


We will use the following very simple lemma.

\begin{lemma}
\label{lstrange}
Let ${\gamma\in \C}$ be a complex  number such that ${\gamma/\overline{\gamma}}$ is not a root of unity, and ${\delta\in \R}$ a real number. Then the equation ${\gamma\eta+\overline{\gamma}\eta'=\delta}$ may have at most one solution in roots of unity $(\eta,\eta')$. This solution satisfies ${\eta\in \Q(\gamma,\overline{\gamma},\delta)}$ and ${\eta'=\overline{\eta}}$. 
\end{lemma}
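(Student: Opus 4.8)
The statement has three parts: uniqueness of the solution $(\eta,\eta')$ in roots of unity, the fact that $\eta$ lies in $\Q(\gamma,\overline\gamma,\delta)$, and the relation $\eta'=\overline\eta$. I would treat these in that order. First, for uniqueness: suppose $(\eta_1,\eta_1')$ and $(\eta_2,\eta_2')$ are two solutions of $\gamma\eta+\overline\gamma\eta'=\delta$. Subtracting, $\gamma(\eta_1-\eta_2)=-\overline\gamma(\eta_1'-\eta_2')$. If $\eta_1\ne\eta_2$, then also $\eta_1'\ne\eta_2'$ (since $\gamma\ne 0$), and dividing gives $\gamma/\overline\gamma = -(\eta_1'-\eta_2')/(\eta_1-\eta_2)$. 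Now I want to derive a contradiction with the hypothesis that $\gamma/\overline\gamma$ is not a root of unity. The quantity $-(\eta_1'-\eta_2')/(\eta_1-\eta_2)$ need not itself be a root of unity, so I need a second equation. The natural move is to conjugate: since $\overline\delta=\delta$ and the complex conjugate of $\gamma\eta+\overline\gamma\eta'=\delta$ is $\overline\gamma\,\overline\eta+\gamma\,\overline{\eta'}=\delta$, the pair $(\overline{\eta'},\overline\eta)$ is also a solution. So I should build uniqueness and the conjugacy relation together.

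Here is the cleaner route I would actually take. Given a solution $(\eta,\eta')$, apply complex conjugation to the defining equation to see that $(\overline{\eta'},\overline\eta)$ is also a solution; so it suffices to show the solution is unique, and the relation $\eta'=\overline\eta$ follows automatically. For uniqueness, take two solutions $(\eta_1,\eta_1')$, $(\eta_2,\eta_2')$; if $\eta_1=\eta_2$ then $\eta_1'=\eta_2'$ and we are done, so assume $\eta_1\ne\eta_2$. From $\gamma(\eta_1-\eta_2)+\overline\gamma(\eta_1'-\eta_2')=0$ we get, using also the conjugated solution $(\overline{\eta_1'},\overline{\eta_1})$ — which together with $(\overline{\eta_2'},\overline{\eta_2})$ gives $\overline\gamma(\overline{\eta_1'}-\overline{\eta_2'})+\gamma(\overline{\eta_1}-\overline{\eta_2})=0$ — two linear relations among $\gamma,\overline\gamma$. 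Eliminating, $\gamma^2(\eta_1-\eta_2)(\overline{\eta_1}-\overline{\eta_2}) = \overline\gamma^2(\eta_1'-\eta_2')(\overline{\eta_1'}-\overline{\eta_2'})$, i.e. $(\gamma/\overline\gamma)^2 = \frac{(\eta_1'-\eta_2')(\overline{\eta_1'}-\overline{\eta_2'})}{(\eta_1-\eta_2)(\overline{\eta_1}-\overline{\eta_2})} = \frac{|\eta_1'-\eta_2'|^2}{|\eta_1-\eta_2|^2}$, a positive real number. But $|\gamma/\overline\gamma|=1$, so $(\gamma/\overline\gamma)^2$ is a real number of absolute value $1$, hence $\pm 1$; either way $(\gamma/\overline\gamma)^4=1$, contradicting that $\gamma/\overline\gamma$ is not a root of unity. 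This forces $\eta_1=\eta_2$, giving uniqueness, and hence $\eta'=\overline\eta$.

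Finally, for the field statement: once $\eta'=\overline\eta$, the equation reads $\gamma\eta+\overline\gamma\,\overline\eta=\delta$, i.e. $2\,\mathrm{Re}(\gamma\eta)=\delta$. Together with the trivial relation $|\gamma\eta|=|\gamma|$, I can solve for $\gamma\eta$: it is the unique (by the uniqueness just proved, or directly) point on the circle of radius $|\gamma|$ with real part $\delta/2$ that actually occurs; in any case $\gamma\eta + \overline{\gamma\eta} = \delta$ and $(\gamma\eta)(\overline{\gamma\eta}) = \gamma\overline\gamma$, so $\gamma\eta$ is a root of the quadratic $t^2 - \delta t + \gamma\overline\gamma$, whose coefficients lie in $\Q(\gamma,\overline\gamma,\delta)$. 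Hence $\gamma\eta$ is algebraic of degree $\le 2$ over that field; but which of the two roots it is is pinned down by $\eta$ being a root of unity (equivalently, by the sign of $\mathrm{Im}(\gamma\eta) = \pm\sqrt{\gamma\overline\gamma - \delta^2/4}$ — and I should note $\mathrm{Im}(\gamma\eta)\ne 0$, else $\gamma\eta\in\R$ would force $\eta/\overline\eta = \overline\gamma/\gamma$, making $\gamma/\overline\gamma = \overline\eta/\eta$ a root of unity, contradiction, so the two roots are genuinely distinct and the ambiguity is resolved by requiring $\gamma\eta/\overline{\gamma\eta}=\gamma\eta/\overline\gamma\,\overline\eta$ to match the prescribed $\eta$). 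A slicker finish: since there is exactly one solution, and complex conjugation composed with swapping $(\eta,\eta')\mapsto(\overline{\eta'},\overline\eta)$ fixes it, any automorphism of $\C$ over $\Q(\gamma,\overline\gamma,\delta)$ either fixes $\gamma$ or swaps $\gamma\leftrightarrow\overline\gamma$ (depending on whether it fixes or moves $\sqrt{\mathrm{disc}}$), and in both cases it sends the unique solution to itself, hence fixes $\eta$; therefore $\eta\in\Q(\gamma,\overline\gamma,\delta)$. The main obstacle is the uniqueness step — extracting a contradiction from a single linear equation requires the conjugation trick to produce the second equation, and one must be careful that $\gamma/\overline\gamma$ being "not a root of unity" is used in the sharp form that even $(\gamma/\overline\gamma)^4\ne 1$; everything after that is routine.
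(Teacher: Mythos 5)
Your uniqueness step contains an algebraic slip that undermines the conclusion. After subtracting the two equations you correctly get $\gamma(\eta_1-\eta_2)+\overline\gamma(\eta_1'-\eta_2')=0$. But the ``second equation'' you derive, $\overline\gamma(\overline{\eta_1'}-\overline{\eta_2'})+\gamma(\overline{\eta_1}-\overline{\eta_2})=0$, has $\gamma$ and $\overline\gamma$ in the wrong places: plugging the conjugated solutions $(\overline{\eta_i'},\overline{\eta_i})$ into $\gamma\eta+\overline\gamma\eta'=\delta$ actually yields $\gamma(\overline{\eta_1'}-\overline{\eta_2'})+\overline\gamma(\overline{\eta_1}-\overline{\eta_2})=0$, which is nothing but the complex conjugate of your first difference relation, hence carries no independent information about $\gamma/\overline\gamma$. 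With the corrected relation, writing $u:=\eta_1-\eta_2$, $v:=\eta_1'-\eta_2'$, multiplying gives $\gamma^2 u\overline v=\overline\gamma^2 v\overline u$, i.e.\ $(\gamma/\overline\gamma)^2=(v/\overline v)(\overline u/u)$, which is a unit-modulus complex number but \emph{not} manifestly a positive real, so the ``positive real of modulus $1$, hence $\pm1$'' shortcut collapses. The argument can be repaired, but it needs an ingredient your write-up never invokes in the uniqueness step: that the $\eta_i$ are roots of unity, so $\overline\eta_i=\eta_i^{-1}$, whence $\overline u=-u/(\eta_1\eta_2)$ and $u/\overline u=-\eta_1\eta_2$, likewise $v/\overline v=-\eta_1'\eta_2'$, giving $(\gamma/\overline\gamma)^2=\eta_1'\eta_2'/(\eta_1\eta_2)$, a root of unity --- still a contradiction, but via a different final step.

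Your ``slicker finish'' for $\eta\in\Q(\gamma,\overline\gamma,\delta)$ has the right idea but is garbled: $\gamma$ is \emph{in} that base field, so every automorphism over it fixes $\gamma$, and the dichotomy ``fixes $\gamma$ or swaps $\gamma\leftrightarrow\overline\gamma$'' is vacuous. The clean version is: any $\sigma\in\Gal(\overline{\Q(\gamma,\overline\gamma,\delta)}/\Q(\gamma,\overline\gamma,\delta))$ fixes $\gamma,\overline\gamma,\delta$ and carries roots of unity to roots of unity, hence permutes root-of-unity solutions; by uniqueness it fixes $(\eta,\eta')$, so $\eta$ lies in the base field. For comparison, the paper's route avoids the two-solution subtraction entirely: it first shows $\eta'=\overline\eta$ for a single solution by comparing imaginary parts and using $\overline\eta=\eta^{-1}$ to identify $(\eta'-\overline\eta)/(\overline{\eta'}-\eta)=-\eta'/\eta$; then it notes that $\eta$ is a root of $F(X)=X^2-(\delta/\gamma)X+\overline\gamma/\gamma$ over $\Q(\gamma,\overline\gamma,\delta)$, whose roots multiply to $\overline\gamma/\gamma$ (not a root of unity), so at most one root is a root of unity (uniqueness), and $F$ cannot be irreducible (the conjugate of $\eta$ would be a root of unity), giving $\eta$ in the base field.
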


\begin{proof}
If ${\eta, \eta'}$ are roots of unity such that ${\gamma\eta+\overline{\gamma}\eta'\in \R}$, then the complex numbers $\overline{\gamma}\eta'$ and $\overline{\gamma}\overline{\eta}$ have the same imaginary part: ${\overline{\gamma}\eta'-\gamma\overline{\eta'}= \overline{\gamma}\overline{\eta}-\gamma\eta}$. 
If ${\eta\ne \overline{\xi}}$ then 
$$
\frac{\gamma}{\overline{\gamma}}= \frac{\eta'-\overline{\eta}}{\overline{\eta'}-\eta}= -\frac{\eta'}{\eta}, 
$$
contradicting the hypothesis that ${\gamma/\overline{\gamma}}$ is not a root of unity. Hence ${\eta'=\overline{\eta}}$. 

Thus,~$\eta$ is a root of the polynomial 
$$
F(X):= X^2 -(\delta/\gamma)X+\overline{\gamma}/\gamma\in \Q(\gamma,\overline{\gamma},\delta)[X]. 
$$
Since the free term $\overline{\gamma}/\gamma$ is not a root of unity, the other root of~$F$ cannot be a root of unity. This proves that there may exist only one possible~$\eta$ for given~$\gamma$ and~$\delta$.

Furthermore, if~$F$ is irreducible over $\Q(\gamma,\overline{\gamma},\delta)$ then its other root is a root of unity as well, which is impossible, as we just saw. Hence~$F$ is reducible, which implies that ${\eta \in \Q(\gamma,\overline{\gamma},\delta)}$. The lemma is proved. 
\end{proof}

\begin{proof}[Proof of Proposition~\ref{prdenom}]
Let~$m$ be the denominator of~$a$; that is, ${a=n/m}$, where ${m,n\in \Z}$ are coprime and ${m>0}$. Let $\lambda_1^{1/m}$ be the positive real $m^\tho$ root,  $\lambda_2^{1/m}$ some complex $m^\tho$ root,  and we define $\lambda_3^{1/m}$ as the complex conjugate of $\lambda_2^{1/m}$. With this choice of $m^\tho$ roots we have 
\begin{equation}
\label{eprodone}
\lambda_1^{1/m}\lambda_2^{1/m}\lambda_3^{1/m}=1. 
\end{equation}
Once the $m^\tho$ roots are defined, the rational powers $\lambda_i^a$ are well-defined as $(\lambda_i^{1/m})^n$. Note that ${\lambda_3^a=\overline{\lambda_2^a}}$. 

Since~$a$ is a TRZ, we have ${\alpha_1\lambda_1^a\eta_1+\alpha_2\lambda_2^a\eta_2+\alpha_2\lambda_3^a\eta_3=0}$ for some roots of unity ${\eta_1,\eta_2,\eta_3}$. We may clearly assume that ${\eta_1=1}$, in which case Lemma~\ref{lstrange} implies that ${\eta_3=\overline{\eta_2}}$. In the sequel we write~$\eta_2$ as~$\eta$ and~$\eta_3$ as~$\overline{\eta}$; that is, we have 
\begin{equation}
\label{etrztrib}
\alpha_1\lambda_1^a+\alpha_2\lambda_2^a\eta+\alpha_3\lambda_3^a\overline{\eta}=0. 
\end{equation}

Recall that we denote ${\K_1=\Q(\lambda_1)}$. Fix an element~$\sigma$ in the absolute Galois group 
${G_1:=\Gal(\overline{\Q}/\K_1)}$. Then ${(\lambda_1^{1/m})^\sigma=\lambda_1^{1/m}\xi_1}$ for some ${\xi_1\in \mu_m}$. The group ${H:=\Gal(\overline{\Q}/\L)}$ is an index~$2$ subgroup of~$G_1$. If ${\sigma\in H}$ then ${\lambda_2^\sigma=\lambda_2}$ and ${\lambda_3^\sigma=\lambda_3}$, in which case there exist  ${\xi_2,\xi_3\in \mu_m}$ such that 
\begin{equation}
\label{enoswitch}
(\lambda_2^{1/m})^\sigma=\lambda_2^{1/m}\xi_2, \qquad (\lambda_3^{1/m})^\sigma=\lambda_3^{1/m}\xi_3. 
\end{equation}
If ${\sigma\notin H}$ then ${\lambda_2^\sigma=\lambda_3}$ and ${\lambda_3^\sigma=\lambda_2}$; in this case there exist  ${\xi_2,\xi_3\in \mu_m}$ such that 
\begin{equation}
\label{eswitch}
(\lambda_2^{1/m})^\sigma=\lambda_3^{1/m}\xi_3, \qquad (\lambda_3^{1/m})^\sigma=\lambda_2^{1/m}\xi_2. 
\end{equation}
Let us apply~$\sigma$ to equalities~\eqref{eprodone} and~\eqref{etrztrib}.  We obtain 
\begin{align}
\label{eprodroots}
&1=1^\sigma= (\lambda_1^{1/m}\lambda_2^{1/m}\lambda_3^{1/m})^\sigma= \lambda_1^{1/m}\lambda_2^{1/m}\lambda_3^{1/m}\xi_1\xi_2\xi_3= \xi_1\xi_2\xi_3,  \\
\label{etrztribsign}
&\alpha_1^\sigma\lambda_1^a\xi_1^n+\alpha_2\lambda_2^a\xi_2^n\eta'+\alpha_3\lambda_3^a\xi_3^n\eta''=0, \qquad
(\eta',\eta'')=
\begin{cases}
(\eta^\sigma,\overline{\eta}^\sigma),& \sigma \in H, \\
(\overline{\eta}^\sigma,\eta^\sigma),& \sigma \notin H. 
\end{cases} 
\end{align}
Note that ${\alpha_1^\sigma=\alpha_1}$ because ${\alpha_1\in \K_1}$,  and that ${\eta'\eta''=(\eta\overline{\eta})^\sigma=1}$.

Rewrite~\eqref{etrztribsign} as 
$$
\alpha_1\lambda_1^a+\alpha_2\lambda_2^a\left(\frac{\xi_2}{\xi_1}\right)^n\eta'+\alpha_3\lambda_3^a\left(\frac{\xi_3}{\xi_1}\right)^n\eta''=0.  
$$
Comparing this to~\eqref{etrztrib}, the uniqueness statement in Lemma~\ref{lstrange} implies that  
$$
\left(\frac{\xi_2}{\xi_1}\right)^n\eta'= \eta, \qquad \left(\frac{\xi_3}{\xi_1}\right)^n\eta''= \overline{\eta}. 
$$
Multiplying these equalities, we obtain ${(\xi_2\xi_3/\xi_1^2)^n=1}$. Since ${\xi_1,\xi_2,\xi_3 \in \mu_m}$ and $m,n$ are coprime, this implies that ${\xi_2\xi_3/\xi_1^2=1}$, which, together with~\eqref{eprodroots}, implies that ${\xi_1^3=1}$. 

We have proved the following: for any ${\sigma \in G_1}$ there exist ${\xi_1\in \mu_3}$ such that ${(\lambda_1^{1/m})^\sigma= \lambda_1^{1/m}\xi_1}$. If ${\xi_1=1}$ for every ${\sigma\in G_1}$ then ${\lambda_1^{1/m}\in \K_1}$. Now assume that ${\xi_1=\zeta_3}$ for some~$\sigma$. Since~$\K_1$ is a real field, any Galois orbit over~$\K_1$ must be stable under the complex conjugation. Hence the Galois orbit of $\lambda_1^{1/m}$ over~$\K_1$ is ${\lambda_1^{1/m},\lambda_1^{1/m}\zeta_3, \lambda_1^{1/m}\overline{\zeta_3}}$.

Thus,  ${[\K_1(\lambda_1^{1/m}):\K_1]\in \{1,3\}}$. On the other hand, item~\ref{ikumexp} of Proposition~\ref{prlami} implies that~$\lambda_1$ is not a $p^\tho$ power in~$\K_1$ for any $p$, and ${-\lambda_1}$ is not a square in~$\K_1$. Hence ${[\K_1(\lambda_1^{1/m}):\K_1]=m}$ by Lemma~\ref{llang}. It follows that ${m\in \{1,3\}}$, as wanted. 
\end{proof}

We also need to take care of the roots of unity occurring in the definition of TRZ. 

\begin{proposition}
\label{prnoroots}
Let~$a$ be a TRZ of the Tribonacci LRS\@. Then, 
with suitable definitions of the rational powers $\lambda_i^a$ we have 
\begin{align}
\label{esumm}
\alpha_1\lambda_1^a+ \alpha_2\lambda_2^a+\alpha_3\lambda_3^a&=0, \\
\label{eprodd}
\lambda_1^a \lambda_2^a\lambda_3^a&=1
\end{align} 
\end{proposition}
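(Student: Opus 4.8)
The plan is to build on Proposition~\ref{prdenom}, which gives that the denominator $m$ of $a$ lies in $\{1,3\}$; write $a=n/m$ with $\gcd(m,n)=1$. I would first fix the determinations of the rational powers exactly as in the proof of Proposition~\ref{prdenom}: take $\lambda_1^{1/m}$ to be the positive real $m^\tho$ root, $\lambda_2^{1/m}$ an arbitrary complex $m^\tho$ root, $\lambda_3^{1/m}:=\overline{\lambda_2^{1/m}}$, and set $\lambda_i^a:=(\lambda_i^{1/m})^n$. Then $\lambda_1^{1/m}\lambda_2^{1/m}\lambda_3^{1/m}$ is a positive real $m^\tho$ root of $\lambda_1\lambda_2\lambda_3=1$, hence equals $1$; raising to the power $n$ gives~\eqref{eprodd} for free. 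So the entire task is to secure~\eqref{esumm}, where I remain free to replace $\lambda_2^{1/m},\lambda_3^{1/m}$ by $\zeta\lambda_2^{1/m},\overline{\zeta}\lambda_3^{1/m}$ for any $\zeta\in\mu_m$, an operation that preserves both $\lambda_3^a=\overline{\lambda_2^a}$ and the product relation.

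Next I would cut the twist down to a sign. Since two definitions of an $m^\tho$ power differ by an element of $\mu_m$, the fact that $a$ is a TRZ gives --- after expressing everything in the powers fixed above and normalising the first coefficient to $1$ --- a relation $\alpha_1\lambda_1^a+\xi_2\alpha_2\lambda_2^a+\xi_3\alpha_3\lambda_3^a=0$ with $\xi_2,\xi_3$ roots of unity. I apply Lemma~\ref{lstrange} with $\gamma=\alpha_2\lambda_2^a$, $\overline\gamma=\alpha_3\lambda_3^a$ and $\delta=-\alpha_1\lambda_1^a\in\R$; its hypothesis holds because $\gamma/\overline\gamma$ has the same fractional ideal as $P'(\lambda_3)/P'(\lambda_2)$ --- using that the $\lambda_i$, hence also the $\lambda_i^{1/m}$, are units --- and this ideal is nontrivial by item~\ref{inotunits} of Proposition~\ref{prlami}, whereas a root of unity has trivial ideal. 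Thus $\xi_3=\overline{\xi_2}$, and with $\eta:=\xi_2$ we reach~\eqref{etrztrib}. Running the Galois computation from the proof of Proposition~\ref{prdenom}, together with the uniqueness clause of Lemma~\ref{lstrange}, shows that $\sigma(\eta)/\eta\in\mu_m$ for every $\sigma\in\Gal(\overline{\Q}/\L)$; hence $\eta^m$ is $\Gal(\overline{\Q}/\L)$-invariant, so $\eta^m\in\mu_\L$. Since $\L_\ab=\Q(\sqrt{-11})$ contains no root of unity besides $\pm1$, we get $\eta^m=\pm1$, and as $m$ is odd I may write $\eta=\epsilon\zeta$ with $\epsilon\in\{1,-1\}$ and $\zeta\in\mu_m$. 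Choosing $\zeta'\in\mu_m$ with $(\zeta')^n=\zeta$, the permitted modification $\lambda_2^{1/m}\mapsto\zeta'\lambda_2^{1/m}$, $\lambda_3^{1/m}\mapsto\overline{\zeta'}\lambda_3^{1/m}$ absorbs $\zeta$ and turns~\eqref{etrztrib} into $\alpha_1\lambda_1^a+\epsilon\bigl(\alpha_2\lambda_2^a+\alpha_3\lambda_3^a\bigr)=0$.

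If $\epsilon=1$ this is precisely~\eqref{esumm}, so the crux is to exclude $\epsilon=-1$, i.e.\ the possibility $\alpha_1\lambda_1^a=\alpha_2\lambda_2^a+\alpha_3\lambda_3^a$. Here I would pick $\rho\in\Gal(\overline{\Q}/\Q)$ inducing on $\{\lambda_1,\lambda_2,\lambda_3\}$ the transposition $\lambda_1\leftrightarrow\lambda_2$; then $\rho(\alpha_1)=\alpha_2$, $\rho(\alpha_2)=\alpha_1$, $\rho(\alpha_3)=\alpha_3$, while $\rho(\lambda_1^a)=c_1\lambda_2^a$, $\rho(\lambda_2^a)=c_2\lambda_1^a$, $\rho(\lambda_3^a)=c_3\lambda_3^a$ for suitable $c_1,c_2,c_3\in\mu_m\subseteq\mu_3$. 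Applying $\rho$ to the relation yields a second one, $-c_2\alpha_1\lambda_1^a+c_1\alpha_2\lambda_2^a-c_3\alpha_3\lambda_3^a=0$. Setting $A=\alpha_1\lambda_1^a\in\R$, $B=\alpha_2\lambda_2^a$, $C=\alpha_3\lambda_3^a=\overline B$, the first relation reads $A=B+C$, and substituting into the second gives $(c_1-c_2)B=(c_2+c_3)C$. But $|B/C|=1$ since $C=\overline B\neq0$, whereas for $c_1,c_2,c_3\in\mu_3$ a direct check shows $|c_2+c_3|\in\{1,2\}$ and $|c_1-c_2|\in\{0,\sqrt3\}$, so $|(c_2+c_3)/(c_1-c_2)|$ can never be $1$; and the degenerate case $c_1=c_2$ forces $(c_2+c_3)C=0$, i.e.\ $c_2+c_3=0$, impossible for cube roots of unity. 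This contradiction kills $\epsilon=-1$ and finishes the proof.

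The step I expect to be the main obstacle is the exclusion of $\eta^m=-1$ in the third paragraph: the Galois descent only pins $\eta$ down modulo $\pm1$, and the residual sign has to be removed by the separate conjugation argument above, which is what forces one to bring in the full $S_3$-action on the splitting field alongside the constraint $\lambda_3^a=\overline{\lambda_2^a}$.
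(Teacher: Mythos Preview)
Your proof is correct, and follows the same three-step skeleton as the paper (reduce the twist to a single $\eta$, pin $\eta$ down to $\pm1$ up to an absorbable factor, kill the $-1$ case via the transposition $\lambda_1\leftrightarrow\lambda_2$), but the execution of the last two steps differs in interesting ways.

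For the reduction to $\eta\in\{\pm1\}$, the paper exploits item~\ref{icubes} of Proposition~\ref{prlami}: it \emph{chooses} $\lambda_i^{1/3}:=\lambda_1^{1/3}\theta_{i1}$ with $\theta_{i1}\in\L$, so that the relation reads $\alpha_1+\alpha_2\theta_{21}^n\eta+\alpha_3\theta_{31}^n\overline\eta=0$ with all coefficients already in~$\L$; then the field-containment clause of Lemma~\ref{lstrange} gives $\eta\in\L$ directly, hence $\eta\in\{\pm1\}$. You instead take an arbitrary $m^\tho$ root of $\lambda_2$ and use only the \emph{uniqueness} clause of Lemma~\ref{lstrange} together with the Galois computation to show $\eta^\sigma/\eta\in\mu_m$ for $\sigma\in\Gal(\overline\Q/\L)$, whence $\eta^m\in\mu_\L=\{\pm1\}$; you then absorb the $\mu_m$-part by redefining $\lambda_2^{1/m}$. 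Your route is a bit longer but has the virtue of not needing the special fact that $\lambda_i/\lambda_j$ are cubes in~$\L$.

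For the exclusion of $\epsilon=-1$, the paper stays purely algebraic: applying $\sigma$ to~\eqref{etamone} and using $\theta_{ij}\in\L$ yields a second relation which, subtracted from the first, gives $\alpha_3\theta_{31}^n=0$. Your version is archimedean: you compare $|c_1-c_2|\in\{0,\sqrt3\}$ with $|c_2+c_3|\in\{1,2\}$ for $c_i\in\mu_3$ and observe the moduli never match (the degenerate case $c_1=c_2$ forcing the impossible $c_2+c_3=0$). Both arguments are short; the paper's works entirely inside~$\L$, while yours trades that for a quick absolute-value check.
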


\begin{proof}
We have ${a=n/3}$, with ${n\in \Z}$. We define $\lambda_1^{1/3}$ as the real cubic root of~$\lambda_1$, and for ${i=2,3}$ we define  
${\lambda_i^{1/3}:=\lambda_1^{1/3}\theta_{i1}}$, where ${\theta_{ij}\in \L}$ are defined in item~\ref{icubes} of Proposition~\ref{prlami}. 

Note that ${\theta_{31}=\overline{\theta_{21}}}$. Indeed, ${\overline{\theta_{21}}^3= \overline{\lambda_2}/\overline{\lambda_1}=\lambda_3/\lambda_1}$. Since $\lambda_3/\lambda_1$ has only one cubic root in~$\L$, we must have ${\theta_{31}=\overline{\theta_{21}}}$.

From our definitions it follows that ${\lambda_1^a \lambda_2^a\lambda_3^a}$ is a positive real number. Since ${(\lambda_1^a \lambda_2^a\lambda_3^a)^3= (\lambda_1 \lambda_2\lambda_3)^n=1}$, this proves~\eqref{eprodd}, so we are only left with~\eqref{esumm}.

As we have seen in the proof of Proposition~\ref{prdenom}, there exists a root of unity~$\eta$ such that~\eqref{etrztrib} holds. This can be rewritten as 
${\alpha_1+\alpha_2\theta_{21}^n\eta+\alpha_3\theta_{31}^n\overline{\eta}=0}$. Lemma~\ref{lstrange} implies that 
$
{\eta\in \Q(\alpha_1,\alpha_2,\alpha_3,\theta_{21},\theta_{31})=\L} 
$. 
Since~$\L$ contains no roots of unity other than ${\pm1}$, we must have ${\eta=1}$ or ${\eta=-1}$. In the former case we are done. Now let us assume that ${\eta=-1}$. In this case 
\begin{equation}
\label{etamone}
\alpha_1-\alpha_2\theta_{21}^n-\alpha_3\theta_{31}^n=0.
\end{equation}
Let ${\sigma \in \Gal(\L/\Q)}$ be such that  
$$
\lambda_1^\sigma=\lambda_2, \qquad \lambda_2^\sigma=\lambda_1, \qquad  \lambda_3^\sigma=\lambda_3. 
$$
Then 
$$
\alpha_1^\sigma=\alpha_2,\quad  \alpha_2^\sigma= \alpha_1, \quad \alpha_3^\sigma=\alpha_3, \quad \theta_{21}^\sigma=\theta_{12}=\theta_{21}^{-1}, \quad \theta_{31}^\sigma= \theta_{32}=\theta_{31}\theta_{21}^{-1}. 
$$
Applying~$\sigma$ to~\eqref{etamone}, we obtain 
${\alpha_2-\alpha_1\theta_{21}^{-n}-\alpha_3\theta_{31}^n\theta_{21}^{-n}=0}$, which can be rewritten as 
${\alpha_1-\alpha_2\theta_{21}^n+\alpha_3\theta_{31}^n=0}$. 
Comparing this with~\eqref{etamone}, we obtain ${\alpha_3\theta_{31}^n=0}$, a contradiction. The proposition is proved. 
\end{proof}

\subsection{Proof of Theorem~\ref{thtribo}}
Let~$W$ be the $\Q$-valued LRS with the general term given by 
$$
W(n)= 44\alpha_1^3\lambda_1^n+44\alpha_2^3\lambda_2^n+44\alpha_3^3\lambda_3^n-3.  
$$
It is an LRS of order~$4$, defined by 
$$
W(0)=W(1)=0, \quad W(2)=2, \quad W(3)=8, \qquad W(n+4)= 2W(n+3)- W(n). 
$$

\begin{proposition}
\label{prw}
Let ${a\in \Q}$ be a TRZ of the Tribonacci LRS. Then~$3a$ is a zero of~$W$. 
\end{proposition}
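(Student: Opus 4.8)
The plan is to reduce the statement entirely to the two relations already established in Propositions~\ref{prdenom} and~\ref{prnoroots}, and then to invoke a single elementary algebraic identity. First I would use Proposition~\ref{prdenom} to write $a=n/3$ with $n\in\Z$, and then apply Proposition~\ref{prnoroots} to fix determinations of the rational powers $\lambda_1^a,\lambda_2^a,\lambda_3^a$ for which both $\alpha_1\lambda_1^a+\alpha_2\lambda_2^a+\alpha_3\lambda_3^a=0$ and $\lambda_1^a\lambda_2^a\lambda_3^a=1$ hold simultaneously. Since $\lambda_i^a=(\lambda_i^{1/3})^n$, we have $(\lambda_i^a)^3=\lambda_i^n$, so putting $x_i:=\alpha_i\lambda_i^a$ gives $x_i^3=\alpha_i^3\lambda_i^n$ and $x_1x_2x_3=\alpha_1\alpha_2\alpha_3\cdot\lambda_1^a\lambda_2^a\lambda_3^a=\alpha_1\alpha_2\alpha_3$.

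Next I would use the identity $x_1^3+x_2^3+x_3^3=3x_1x_2x_3$, which holds whenever $x_1+x_2+x_3=0$, i.e. precisely under the first relation above. This yields $\sum_{i=1}^3\alpha_i^3\lambda_i^n=3\alpha_1\alpha_2\alpha_3$, and hence $W(n)=44\sum_{i=1}^3\alpha_i^3\lambda_i^n-3=132\,\alpha_1\alpha_2\alpha_3-3$.

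Finally I would evaluate $\alpha_1\alpha_2\alpha_3$ explicitly. By definition $\alpha_i=P'(\lambda_i)^{-1}\lambda_i$, so $\alpha_1\alpha_2\alpha_3=(\lambda_1\lambda_2\lambda_3)/\prod_{i=1}^3P'(\lambda_i)$; since $\lambda_1\lambda_2\lambda_3=1$ (the constant term of $P$) and $\prod_{i=1}^3P'(\lambda_i)=44$ by~\eqref{eff}, we obtain $\alpha_1\alpha_2\alpha_3=1/44$. Therefore $W(n)=132/44-3=0$, i.e. $3a=n$ is a zero of~$W$, as claimed.

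There is no real obstacle here: the substantial work has already been done in Propositions~\ref{prdenom} and~\ref{prnoroots}. The only points that require attention are (i) that a single choice of cube roots validates both relations of Proposition~\ref{prnoroots} — which is exactly what that proposition asserts, so the argument must quote it in that combined form — and (ii) that cubing $\lambda_i^a$ returns the genuine integer power $\lambda_i^n$, so that the cubes $x_i^3$ are the coefficients appearing in $W$. The cubic identity and the evaluation $\prod P'(\lambda_i)=44$ are the only computational inputs, and both are immediate.
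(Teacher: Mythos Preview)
Your proposal is correct and follows essentially the same route as the paper: both arguments invoke Propositions~\ref{prdenom} and~\ref{prnoroots}, set $x_i=\alpha_i\lambda_i^a$, compute $\alpha_1\alpha_2\alpha_3=1/44$ via~\eqref{eff}, and then use the identity $x_1^3+x_2^3+x_3^3-3x_1x_2x_3=0$ when $x_1+x_2+x_3=0$. The only cosmetic difference is that the paper packages this identity as the factorization of $F(X_1,X_2,X_3)=X_1^3+X_2^3+X_3^3-3X_1X_2X_3$ with a linear factor $X_1+X_2+X_3$, whereas you quote the identity directly.
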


\begin{proof}
As we have seen in Proposition~\ref{prdenom}, if~$a$ is a TRZ of the Tribonacci LRS, then ${n:=3a\in \Z}$, so we only need to prove that ${W(n)=0}$.

Define the rational powers $\lambda_i^a$   as in Proposition~\ref{prnoroots}, so that both~\eqref{esumm} and~\eqref{eprodd} hold. Using~\eqref{eff}, we also find 
\begin{equation}
\label{eprodal}
\alpha_1\alpha_2\alpha_3=\frac{\lambda_1\lambda_2\lambda_3}{P'(\lambda_1)P'(\lambda_2)P'(\lambda_3)}=\frac{1}{44}. 
\end{equation}
Consider the polynomial 
$$
F(X_1,X_2,X_3)=X_1^3+X_2^3+X_3^3-3X_1X_2X_3 \in \Z[X_1,X_2,X_3]. 
$$
Then 
$
{44F(\alpha_1\lambda_1^a, \alpha_2\lambda_2^a,\alpha_3\lambda_3^a)= W(n)} 
$, 
because 
${\alpha_1\lambda_1^a \alpha_2\lambda_2^a\alpha_3\lambda_3^a=  1/44}$, as follows from~\eqref{eprodd} and~\eqref{eprodal}.
The polynomial~$F$  
factors as 
$$
F(X_1,X_2,X_3)=(X_1+X_2+X_3)(X_1+\zeta_3 X_2+\overline{\zeta_3} X_3)(X_1+\overline{\zeta_3} X_2+\zeta_3 X_3),  
$$
which implies that ${F(\alpha_1\lambda_1^a, \alpha_2\lambda_2^a,\alpha_3\lambda_3^a)=0}$ by~\eqref{esumm}. This completes the proof.
\end{proof}

\begin{proposition}
\label{prserw}
The only zeros of~$W$ are ${-51,-12,-5,-3,0,1}$. 
\end{proposition}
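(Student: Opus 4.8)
The plan is to analyze the LRS $W$ of order~$4$ directly, using its explicit recurrence $W(n+4)=2W(n+3)-W(n)$ together with its characteristic polynomial. The characteristic polynomial factors as $X^4-2X^3+1=(X-1)P(X)=(X-1)(X^3-X^2-X-1)$, so the roots are $1,\lambda_1,\lambda_2,\lambda_3$, all of multiplicity one. Since the Tribonacci LRS is non-degenerate and $1/\lambda_i$ is not a root of unity, $W$ is a non-degenerate LRS, so by Skolem--Mahler--Lech it has finitely many zeros. The candidates $-51,-12,-5,-3,0,1$ are the claimed complete list; note they are exactly $-1$ times the integers $51,12,5,3,0,-1$, which are (essentially) $3$ times the known TRZs $-17,-4,-5/3,-1,0,1/3$ of the Tribonacci sequence (with $n=3a$), so the statement is consistent with Theorem~\ref{thtribo} and in fact Proposition~\ref{prw} already shows every TRZ-tripled value is among the zeros of $W$; here we must prove there are no others.

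First I would verify by direct computation (iterating the recurrence both forwards and backwards from the initial data $W(0)=W(1)=0$, $W(2)=2$, $W(3)=8$) that $W$ vanishes exactly at $-51,-12,-5,-3,0,1$ within some explicit window, and that outside that window $|W(n)|$ is forced to be large. The key estimate: writing $W(n)=44\alpha_1^3\lambda_1^n+44\alpha_2^3\lambda_2^n+44\alpha_3^3\lambda_3^n-3$ with $\lambda_1\approx 1.839$ real dominant and $|\lambda_2|=|\lambda_3|=\lambda_1^{-1/2}\approx 0.737$, for $n\to+\infty$ the term $44\alpha_1^3\lambda_1^n$ dominates and $W(n)\to+\infty$, while the other two terms plus the constant $-3$ are bounded; an explicit lower bound on $|44\alpha_1^3\lambda_1^n|-3-|44\alpha_2^3\lambda_2^n|-|44\alpha_3^3\lambda_3^n|$ shows $W(n)\ne 0$ for $n$ larger than some small explicit bound (say $n\ge 2$). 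For $n\to-\infty$ one instead has $\lambda_2^n,\lambda_3^n$ growing like $\lambda_1^{|n|/2}$ and $\lambda_1^n\to 0$; here I would use the complex-conjugate structure, writing $44\alpha_2^3\lambda_2^n+44\alpha_3^3\lambda_3^n = 2\mathrm{Re}(44\alpha_2^3\lambda_2^n)$, and bound $W(n)$ from a lower bound on $|44\alpha_2^3\lambda_2^n|$ minus $|44\alpha_1^3\lambda_1^n|+3$, valid for $n$ below some explicit negative bound (say $n\le -52$). This confines all zeros to a finite interval, e.g. $-51\le n\le 1$, where one checks the $53$ values by the recurrence.

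The main obstacle is making the archimedean estimate for $n\to-\infty$ genuinely rigorous and effective: the term $2\mathrm{Re}(44\alpha_2^3\lambda_2^n)$ oscillates and could in principle come close to cancelling, so one cannot simply use $|2\mathrm{Re}(z)|$ is bounded below by a positive constant times $|z|$. The standard resolution is that this oscillating quantity is itself (a rescaling of) a non-degenerate binary recurrence, hence an algebraic integer in $\L$ of bounded absolute value at the archimedean places below a certain threshold forces it to be a unit divided by a fixed denominator, a contradiction for large $|n|$ by a height/Liouville-type argument; concretely, $44\alpha_2^3\lambda_2^n+44\alpha_3^3\lambda_3^n$ differs from $W(n)+3$ by $44\alpha_1^3\lambda_1^n\to 0$, so if $W(n)=0$ then this algebraic integer is within $O(\lambda_1^{-|n|})$ of $3$, while its conjugate $44\alpha_1^3\lambda_1^n+44\alpha_2^3\lambda_1^n\cdot(\text{other embedding})$ — more carefully, tracking all six embeddings of $\L$ — has a product (the norm) that is a fixed nonzero rational, yielding a lower bound on $\max$ of conjugates that is eventually violated. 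Alternatively, and more in the spirit of the rest of the paper, one can sidestep the oscillation by running the argument $\gerp$-adically: pick a prime $\gerp$ regular for $W$, apply Skolem's $p$-adic interpolation (Proposition~\ref{prsko}) and the structure theorem (Theorem~\ref{thfunml}) to pin down, on each residue class mod $p^k$, either that $\nu_\gerp(W(n))$ is constant or that it equals $\kappa\nu_p(n-a_i)+\tau$ for one of the finitely many zeros $a_i$ already located; combined with a small explicit search and a Strassmann-bound count of zeros on each disk $\Z_p$, this caps the total number of zeros at $6$ and matches them to the listed values. I would present the $p$-adic route (choosing a convenient small prime, e.g. checking $p=3$ is regular since the recurrence coefficients are $2,0,0,-1$ and $a_0=-1$ is a $3$-adic unit, or another prime if $3$ is inconvenient), reducing the proposition to a finite verification plus the general machinery of Section~\ref{sorder}.
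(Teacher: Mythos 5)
The paper's own proof is a one-liner: it invokes the certified Skolem Tool of~\cite{BLNOPW22,SkolemTool}, which implements a $p$-adic decision procedure for the zero set of a simple non-degenerate rational LRS and, upon termination, returns the complete zero set together with a rigorous certificate. Your second suggested route ($p$-adic interpolation via Proposition~\ref{prsko}, the structure theorem Theorem~\ref{thfunml}, and Strassmann bounds on each residue disk) is, in spirit, exactly what that tool does; so if you carried out the finite $p$-adic computation you would be reproducing the paper's argument by hand. The only real divergence is that the paper delegates the computation to certified software while you propose to do it explicitly.

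Your first route, however, has a genuine gap at the $n\to-\infty$ end, and the patch you sketch does not close it. You write $W(n)+3-44\alpha_1^3\lambda_1^n = 44\alpha_2^3\lambda_2^n+44\alpha_3^3\lambda_3^n =: B(n)\in\Q(\lambda_1)$, and note that if $W(n)=0$ then $B(n)$ is within $O(\lambda_1^{n})$ of $3$. But the conjugates of $B(n)$ over $\Q$ are $W(n)+3-44\alpha_i^3\lambda_i^n$ for $i=1,2,3$, and for $n\to-\infty$ two of these blow up like $\lambda_1^{|n|/2}$, so the norm of (a denominator-cleared) $B(n)$ grows like $\lambda_1^{|n|}$ rather than being a fixed rational. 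A naive norm or Liouville-type lower bound therefore gives no contradiction: it is perfectly consistent with the conjugate data for $B(n)$ to sit arbitrarily close to $3$. Ruling this out is precisely the hard direction of the Skolem problem for a dominant complex-conjugate pair, and it requires either Baker-type linear forms in logarithms (as in the order-$4$ decidability of Vereshchagin, used internally by the Skolem Tool) or the $p$-adic method. So the archimedean sketch does not yield a complete proof; you should commit to the $p$-adic route (or cite the tool, as the paper does) rather than present the norm argument as a viable alternative.
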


\begin{proof}
  In~\cite{BLNOPW22} an algorithm is suggested which, when terminates,
  produces the full set of zeros of a given non-degenerate LRS,
  together with a mathematically rigorous proof that no other zeros
  exist. This algorithm is implemented, for simple\footnote{An LRS is
    called \textit{simple} if its characteristic polynomial has only
    simple roots.} non-degenerate $\Q$-valued LRS, in the
  \textit{Skolem Tool}~\cite{SkolemTool}. Running the Skolem Tool for
  the LRS~$W$, we obtain the result.
\end{proof}

We know (see \cite[Section~2]{BLNOW23}) that $-17,-4,-5/3,-1,0,1/3$ are indeed TRZs of the Tribonacci LRS\@. Hence Theorem~\ref{thtribo} is an immediate consequence of Propositions~\ref{prw} and~\ref{prserw}.

\section{On Question~\ref{quhard}}
\label{squest}
In this section we discuss Question~\ref{quhard}. We will see that the answer is positive for  twisted (integral) zeros of LRS of order~$2$, but (in general) not  for higher order LRS\@. Unless the contrary is stated explicitly, the letter~$p$ in this section denotes a prime number. 

For  twisted (integral) zeros of LRS of order~$2$ we not only answer Question~\ref{quhard}, but obtain a partial analog of Theorem~\ref{theasyk}. 

\begin{theorem}
\label{thquest}
Let~$U$ be a non-degenerate LRS of order~$2$ with values in a number field~$\K$ and ${a\in \Z}$  a twisted zero of~$U$. Then for infinitely many primes~$\gerp$ of~$\K$ the following holds: there exist 
\begin{equation}
\label{eqapkt}
Q\in \Z_{>0}, \qquad a'\in \{0,1, \ldots, Q-1\}, 
\qquad \tau\in \Z
\end{equation}
such that for every ${n\in \Z}$ satisfying ${n\equiv a' \pmod Q}$ we have 
$$
\nu_\gerp(U(n))=\nu_p(n-a)+\tau, 
$$ 
where~$p$ is the rational prime below~$\gerp$. Moreover, ${p\nmid Q}$; in particular, ${\nu_p(n-a)}$ is unbounded on the set of~$n$ satisfying ${n\equiv a'\pmod Q}$. 
\end{theorem}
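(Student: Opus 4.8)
The plan is to write down the Binet expansion of a non-degenerate order-$2$ LRS, say $U(n) = c_1\lambda_1^n + c_2\lambda_2^n$ with $\lambda_1,\lambda_2 \in \overline{\K}$ distinct and $\lambda_1/\lambda_2$ not a root of unity, and to analyze the twisted-zero condition at $a$: there is a root of unity $\xi$ such that $c_1\lambda_1^a + \xi c_2\lambda_2^a = 0$, equivalently $(\lambda_1/\lambda_2)^a = -\xi c_2/c_1$. First I would pass to a suitable finite extension $\L$ of $\K$ (and later choose $\gerp$ above a prime of $\K$) containing all relevant quantities, and set $\mu := \lambda_1/\lambda_2$, $\gamma := -c_2/c_1$, so $\mu^a = \xi\gamma$. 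The key observation is that $\nu_\gerp(U(n)) = \nu_\gerp(c_1\lambda_1^n) + \nu_\gerp(1 - \mu^{n-a}\xi^{-1} \cdot(\text{correction}))$ once we factor out $c_1\lambda_1^a$; more precisely, writing $n = a + k$, we get $U(n) = c_1\lambda_1^a\lambda_1^k(1 - \xi^{-1}\mu^k)\cdot u$ for a $\gerp$-unit $u$ (on a residue class where $\lambda_i$ are $\gerp$-units), so the whole problem reduces to understanding $\nu_\gerp(1 - \xi^{-1}\mu^k)$ as a function of $k$.

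The next step is the choice of $\gerp$. I would invoke a Chebotarev/primitive-divisor-type argument to select infinitely many primes $p$ (and primes $\gerp$ of $\K$ above them) that are regular for $U$, unramified in $\L$, such that the residue field is large enough that $\mu$ and $\xi$ reduce to elements of the residue field, and — crucially — such that $\mu \pmod{\gerp}$ has multiplicative order equal to some controlled value, and $\xi$ is a power of that reduction. Concretely, I want a prime $\gerp$ such that $\mu^{Q_0} \equiv 1 \pmod{\gerp}$ for a suitable $Q_0$ coprime to $p$, and such that on the residue class $k \equiv k_0 \pmod{Q_0}$ (where $k_0$ is chosen so that $\xi^{-1}\mu^{k_0} \equiv 1$) one has $\xi^{-1}\mu^k \in \DD(1,\rho)$ in $\L_\gerp$. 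Then the $p$-adic logarithm is available: $\nu_\gerp(1 - \xi^{-1}\mu^k) = \nu_\gerp(\log(\xi^{-1}\mu^k)) = \nu_\gerp(k\log\mu + \log(\xi^{-1}\mu^{k_0}))$ once things lie in the right disk, and because $\mu$ is not a root of unity, $\log\mu \neq 0$, so this is $\nu_p(k - k_1) + \const$ for the unique zero $k_1$ of the linear function $k \mapsto k\log\mu + c$. One then checks $k_1 = a - a$ shifted appropriately, i.e. that the zero of the interpolating function sits exactly at $n = a$, which holds because $\xi\gamma = \mu^a$ forces the constant term to vanish precisely at $k = 0$ modulo the chosen modulus; this yields $\nu_\gerp(U(n)) = \nu_p(n - a) + \tau$ on the residue class $n \equiv a' \pmod{Q}$ with $Q = Q_0$ coprime to $p$, and $\tau$ absorbing $\nu_\gerp(c_1\lambda_1^a)$ and the height of the logarithm.

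The main obstacle I anticipate is the \emph{simultaneous} control needed in the choice of $\gerp$: one must ensure at once that (i) $\gerp$ is regular for $U$, (ii) $\mu$ and $\xi$ have reductions satisfying the order/power compatibility so that some residue class $k \equiv k_0$ makes $\xi^{-1}\mu^k$ congruent to $1$, (iii) the relevant element actually lands in the disk $\DD(1,\rho)$ of the completion rather than merely in $\DD(1,1)$ — this forces a further congruence condition carving out the modulus $Q$, and one has to verify $p \nmid Q$, and (iv) there are infinitely many such $\gerp$. Step (iv) and the coprimality $p \nmid Q$ are where the order-$2$ hypothesis is really used: with only one ratio $\mu$ to control, a Frobenius condition cutting out a positive-density set of primes suffices, whereas for higher order one would need several ratios to behave compatibly, which (as the paper notes) fails in general. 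I would handle (iv) by a direct Chebotarev argument on $\L(\zeta_{M})/\Q$ for appropriate $M$, or alternatively by an elementary argument producing infinitely many primes $\ell$ dividing values $\mu^t - \xi^{s}$ for suitable exponents, together with a primitive-divisor input to guarantee the order of $\mu \bmod \gerp$ is not too small; the delicate point is keeping the disk condition (iii) compatible with infinitude, which I expect requires splitting into the cases $p$ odd and $p = 2$ and a short separate ramification discussion, exactly as in the statement's aside about $4 \mid m$ phenomena.
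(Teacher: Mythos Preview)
Your plan is essentially the paper's proof: factor $U(n)$ so that everything reduces to $\nu_\gerp(\xi^{-1}\mu^{k}-1)$ with $\mu=\lambda_1/\lambda_2$ and $k=n-a$, then choose infinitely many primes $\gerp$ for which the order of $\overline\mu$ in the residue field is divisible by the order~$r$ of~$\xi$, so that $\xi$ lies in $\langle\overline\mu\rangle$ and some residue class makes $\xi^{-1}\mu^k\equiv 1$. The paper isolates this density statement as Proposition~\ref{prcheb} and, after first normalizing (shift to $a=0$, multiplicative twist to $U(n)=\eta\lambda^n-1$ with~$\eta$ a root of unity of order~$r$), finishes with the elementary Lucas-type Lemma~\ref{lold} in place of your $p$-adic logarithm; the two devices are interchangeable here.

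Two small corrections. Your displayed identity $\log(\xi^{-1}\mu^k)=k\log\mu+\log(\xi^{-1}\mu^{k_0})$ is not right as written; what makes the zero land at $k=0$ is that, with the Iwasawa extension of $\log$, one has $\log(\xi^{-1}\mu^k)=k\log\mu$ because $\log$ annihilates roots of unity (equivalently, write $k=k_0+Q_0m$, use $\log(\xi^{-1}\mu^{k_0})+m\log(\mu^{Q_0})$, and then identify both summands as multiples of $\log\mu$). And your anticipated obstacle~(iii) about landing in $\DD(1,\rho)$, together with the $p=2$ worry, disappears once you restrict to $\gerp$ unramified of residue degree~$1$ with $\NN\gerp>2$: then $1+\gerp\OO_\gerp\subset\DD(1,\rho)$ automatically, and discarding these finitely many conditions costs nothing in density.
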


In fact, we show that this holds true for primes~$\gerp$ from a set of positive lower density. Let us recall the definition of density. Denote by $\pi_\K(x)$ the counting function for primes of~$\K$; that is, the number of primes~$\gerp$ of~$\K$ such that ${\NN\gerp\le x}$; here $\NN\gerp$ denotes the absolute norm. Let~$\PP$ be a set of primes of~$\K$. We denote its lower density as 
$
\liminf_{x\to+\infty}{\#\{\gerp\in \PP: \NN\gerp\le x\}}/{\pi_\K(x)}. 
$

\subsection{A density result}

The proof of Theorem~\ref{thquest} relies on a certain Chebotaryov\footnote{We prefer the spelling \textit{Chebotaryov}, as in~\cite{wikicheb}, because it is more consistent with the original Russian and Ukrainian pronunciation.}-style density result. To state it, let us introduce some more notation. In this subsection~$\K$ is a number field, unless stated otherwise. 

Let  ${\alpha \in \K^\times}$ and a prime~$\gerp$ of~$\K$ be such that ${\nu_\gerp(\alpha)=0}$. We denote by  $\ord_\gerp(\alpha)$ the multiplicative order of~$\alpha$ modulo~$\gerp$. That is, let 
$
{\OO_\gerp:=\{x\in \K: \nu_p(x)\ge 0\}}
$
be the local ring of~$\gerp$, and ${\OO_\gerp \to \OO_\gerp/\gerp: x\mapsto\overline{x}}$ the reduction map\footnote{This is in conflict with the convention of Section~\ref{stribo}, where ${x\mapsto\overline{x}}$ denotes the complex conjugation. However, in this section we never use complex conjugation, so there is no risk of confusion.} modulo~$\gerp$. Then ${\overline{\alpha} \in (\OO_\gerp/\gerp)^\times}$, and $\ord_\gerp(\alpha)$ is the order of~$\overline{\alpha}$ in the multiplicative group $(\OO\gerp/\gerp)^\times$. 

For a positive integer~$r$ we denote by $\PP_\K(\alpha, r)$   the set of $\K$-primes~$\gerp$ such that the multiplicative order $\ord_\gerp(\alpha)$ is divisible by~$r$. In symbols: 
$$
\PP_\K(\alpha, r):=\{\gerp: r\mid \ord_\gerp(\alpha)\}. 
$$

\begin{proposition}
\label{prcheb}
Let~$r$ be a positive integer with the property
\begin{equation}
\label{ezep}
\text{$\zeta_p\in \K$ for every ${p\mid r}$}.
\end{equation}
Let ${\alpha \in \K^\times}$ be not a root of unity. Then the set $\PP_\K(\alpha, r)$ is infinite, and, moreover, it is of positive lower density. 
\end{proposition}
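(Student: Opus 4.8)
The plan is to prove Proposition~\ref{prcheb} by a Chebotaryov-style argument applied to a suitable Kummer-type extension. First I would observe that, by multiplicativity of orders, it suffices to treat the case where $r$ is a prime power; indeed, if $r=\prod_i p_i^{e_i}$, then $r\mid\ord_\gerp(\alpha)$ if and only if $p_i^{e_i}\mid\ord_\gerp(\alpha)$ for each $i$, so $\PP_\K(\alpha,r)=\bigcap_i\PP_\K(\alpha,p_i^{e_i})$; a short extra argument (or a direct treatment of the prime-power case together with a product-of-densities estimate via an appropriate compositum) reduces the general case to $r=p^e$. So assume $r=p^e$ and, by hypothesis~\eqref{ezep}, $\zeta_p\in\K$.

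Next I would reformulate the condition $r\nmid\ord_\gerp(\alpha)$ in field-theoretic terms. For a prime $\gerp$ of good reduction, $p^e\nmid\ord_\gerp(\alpha)$ holds precisely when $\overline\alpha$ is a $p^{e'}$-th power in the residue field for $e'$ large enough relative to the $p$-part of $\NN\gerp-1$; more usefully, I would pin down the complementary event. Consider the field $\K_j:=\K(\zeta_{p^j},\alpha^{1/p^j})$ for $j\ge 1$. A prime $\gerp$ splitting completely in $\K_1=\K(\alpha^{1/p})$ (recall $\zeta_p\in\K$ already) has $\overline\alpha$ a $p$-th power mod~$\gerp$, which together with $p\mid\NN\gerp-1$ can be arranged to force $p\nmid\ord_\gerp(\alpha)$ — and conversely, up to finitely many exceptions, $p\mid\ord_\gerp(\alpha)$ iff $\gerp$ does not split completely in $\K(\alpha^{1/p})$. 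More precisely, the standard dichotomy is: $p\nmid\ord_\gerp(\alpha)$ iff $\alpha$ is a $p$-th power modulo $\gerp$; since $\zeta_p\in\K$, the Frobenius at $\gerp$ acts trivially on $\alpha^{1/p}$ exactly when $\gerp$ splits completely in $\K(\alpha^{1/p})/\K$. Thus $\PP_\K(\alpha,p)$ is, up to finitely many primes, the complement of the split-completely set of $\K(\alpha^{1/p})/\K$, and by Chebotaryov this complement has density $1-[\K(\alpha^{1/p}):\K]^{-1}$. Since $\alpha$ is not a root of unity and $\zeta_p\in\K$, Lemma~\ref{llang} (the irreducibility criterion for $X^p-\alpha$, using Remark~\ref{rlang} when $p=2$, where $\sqrt{-1}=\zeta_4$; one must separately check $\alpha\notin\K^p$, which holds since otherwise $\alpha^{1/p}\in\K$ is again not a root of unity and one recurses, ultimately using that the Kummer exponent is finite) gives $[\K(\alpha^{1/p}):\K]=p>1$, so this density is positive.

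For general $r=p^e$ I would iterate this: $p^e\nmid\ord_\gerp(\alpha)$ forces $\gerp$ to split in $\K(\alpha^{1/p})$, and more: one needs $\alpha$ to be a $p^e$-th power modulo $\gerp$ whenever $p^e\mid\NN\gerp-1$. Here I would invoke Chebotaryov for the extension $\K(\zeta_{p^e},\alpha^{1/p^e})/\K$: choosing a Frobenius conjugacy class that fixes $\alpha^{1/p}$ but moves $\zeta_{p^e}$ appropriately (so that $p^e\nmid\NN\gerp-1$, killing the obstruction) produces a positive-density set of primes with $p^e\nmid\ord_\gerp(\alpha)$, hence — passing to the complement within a fixed Chebotaryov frame, or just intersecting with the density-$1$ set where $p\mid\NN\gerp-1$ — a positive-density subset of $\PP_\K(\alpha,p^e)$. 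In fact it is cleanest to directly exhibit a positive-density set \emph{inside} $\PP_\K(\alpha,r)$: take primes $\gerp$ with $p\mid\NN\gerp-1$ (density $1$ minus a Chebotaryov-computable amount, still positive) and whose Frobenius in $\K(\alpha^{1/p})/\K$ is nontrivial; for such $\gerp$, $\alpha$ is not a $p$-th power mod $\gerp$, so $p\mid\ord_\gerp(\alpha)$, hence $r=p^e$ divides $\ord_\gerp(\alpha)$ once we further localize to $p^e\mid\NN\gerp-1$, which is again a positive-density Chebotaryov condition in $\K(\zeta_{p^e})/\K$ compatible with the previous one since these extensions are linearly disjoint enough (the joint Galois group surjects onto the product of the relevant quotients). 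Finally, reassembling the prime-power pieces for composite $r$: the fields $\K(\zeta_{p_i^{e_i}},\alpha^{1/p_i^{e_i}})$ for distinct $p_i$ are linearly disjoint over $\K$, so a single application of Chebotaryov to their compositum, selecting on each factor a class guaranteeing $p_i^{e_i}\mid\ord_\gerp(\alpha)$, yields the positive lower density of $\PP_\K(\alpha,r)$.

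The main obstacle is the bookkeeping around the $p$-part of $\NN\gerp-1$: the naive statement "$\gerp$ does not split in $\K(\alpha^{1/p})$ implies $p\mid\ord_\gerp(\alpha)$" is true, but pushing from $p$ to $p^e$ requires simultaneously controlling how much $p$ divides $\NN\gerp-1$ and how $\alpha$ sits as a power modulo $\gerp$, and these are governed by two different (though compatible) conditions on the Frobenius in $\K(\zeta_{p^e},\alpha^{1/p^e})$. Making the linear-disjointness/degree computations precise — which is exactly where Lemma~\ref{llang} and the finiteness of the Kummer exponent $\varrho_\K(\alpha)$ enter, to guarantee the Kummer extensions are as large as expected and the Frobenius has room to take the required values — is the technical heart of the argument; everything else is a routine Chebotaryov density computation.
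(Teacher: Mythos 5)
Your overall framework is the same as the paper's: apply Chebotaryov to the Kummer extension $\K(\zeta_r,\alpha^{1/r})/\K$, choosing Frobenius classes that simultaneously force $r\mid \NN\gerp-1$ and prevent $\overline\alpha$ from being a $p$-th power mod~$\gerp$ for each $p\mid r$. Your prime-by-prime reduction ($r=\prod p_i^{e_i}$) and subsequent reassembly via a compositum is a legitimate stylistic variant of the paper's direct treatment of composite $r$; both boil down to the same degree and disjointness computations.

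However, there is a genuine gap in the step where you dispose of the degeneracy. You propose to verify $\alpha\notin\K^p$ and, failing that, to recurse on a $p$-th root, invoking finiteness of the Kummer exponent. But $\alpha\notin\K^p$ is \emph{not} the right condition: what your argument actually needs is that $\K(\alpha^{1/p})$ be linearly disjoint from $\K(\zeta_{p^e})$ over $\K$, and this fails precisely when $\alpha\in\mu_\K\K^p$, not merely when $\alpha\in\K^p$. Concretely, if $\alpha=\xi\beta^p$ with $\xi\in\mu_\K$ a primitive $p^\ell$-th root of unity (where $p^\ell$ is the full $p$-part of $\#\mu_\K$) and $\beta\in\K$, then $\alpha\notin\K^p$ so your check passes, yet $\K(\alpha^{1/p})=\K(\xi^{1/p})=\K(\zeta_{p^{\ell+1}})\subseteq\K(\zeta_{p^e})$ as soon as $e\ge\ell+1$. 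In that situation the two Chebotaryov conditions you want to impose simultaneously — Frobenius nontrivial in $\K(\alpha^{1/p})/\K$, Frobenius trivial in $\K(\zeta_{p^e})/\K$ — are mutually exclusive, and the claim that ``the joint Galois group surjects onto the product of the relevant quotients'' is simply false. Your proposed recursion never sees this case, because $\alpha$ is not a $p$-th power in $\K$, so nothing triggers the descent.

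The paper resolves exactly this difficulty before running Chebotaryov. It first enlarges $r$ so that $\nu_p(r)>\nu_p(\#\mu_\K)$ for $p\mid r$, which makes the set $\PP_\K(\alpha,r)$ invariant under replacing $\alpha$ by $\alpha\xi$ for any $\xi\in\mu_\K$; it then uses this freedom to write $\alpha\xi=\beta^N$ with $\beta$ $r$-reduced, i.e.\ $\beta\notin\mu_\K\K^p$ for $p\mid r$, and passes to $\beta$ with a correspondingly larger modulus. The $r$-reduced condition is precisely what allows the paper to invoke Lemma~\ref{lkum} (not just Lemma~\ref{llang}) to conclude $\alpha^{1/p}\notin\K(\zeta_r)$ for each $p\mid r$, which in turn gives the full degree $[\K(\zeta_r,\alpha^{1/r}):\K(\zeta_r)]=r$ and the linear disjointness your sketch takes for granted. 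So the gap in your argument is real, the missing tool is Lemma~\ref{lkum}, and the missing idea is the normalization of $\alpha$ modulo $\mu_\K$ (not merely modulo $\K^p$).

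Two smaller remarks. First, the sentence ``$\alpha$ is not a $p$-th power mod $\gerp$, so $p\mid\ord_\gerp(\alpha)$, hence $r=p^e$ divides $\ord_\gerp(\alpha)$ once we further localize to $p^e\mid\NN\gerp-1$'' is misleading as written: the implication from ``$p\mid\ord_\gerp(\alpha)$ and $p^e\mid\NN\gerp-1$'' to ``$p^e\mid\ord_\gerp(\alpha)$'' is false in general; you must carry the stronger hypothesis ``$\overline\alpha$ is not a $p$-th power'' (equivalently, the $p$-part of $\ord_\gerp(\alpha)$ equals the $p$-part of $\NN\gerp-1$) through to the conclusion, as the paper does. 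Second, since $\zeta_p\in\K$ is part of the hypothesis, the condition $p\mid\NN\gerp-1$ holds for every $\gerp\nmid p$; it has density~$1$, not ``$1$ minus a Chebotaryov-computable amount''.
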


The proof of Proposition~\ref{prcheb} depends on the following lemma. 

\begin{lemma}
\label{lkum}
Let~$p$ be a prime number and~$\K$ a field of characteristic distinct from~$p$.  Define~$\ell$ as the biggest integer such that ${\zeta_{p^\ell}\in \K}$. Assume that ${\ell\ge 1}$. 
Let ${\alpha\in \K}$ be such that ${\alpha^{1/p}\in \K(\zeta_{p^{\ell+1}})}$. Then ${\alpha \in \mu_{p^\ell}\K^p}$. 
\end{lemma}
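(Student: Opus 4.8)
The plan is to realize $L:=\K(\zeta_{p^{\ell+1}})$ as a cyclic extension of~$\K$ of degree exactly~$p$, and then perform a Galois descent on a $p$-th root of~$\alpha$ lying in~$L$.

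First I would dispose of the trivial case ${\alpha=0}$ and otherwise work in~$\K^\times$. Since ${\ell\ge 1}$ we have ${\mu_p\subseteq\K}$, and ${\eta:=\zeta_{p^{\ell+1}}^p}$ is a primitive $p^\ell$-th root of unity, hence lies in~$\K$. I claim ${\eta\notin\K^p}$: if ${\eta=c^p}$ with ${c\in\K}$, then ${(\zeta_{p^{\ell+1}}/c)^p=1}$, so ${c=\zeta_{p^{\ell+1}}\omega^{-1}}$ for some ${\omega\in\mu_p\subseteq\K}$, forcing ${\zeta_{p^{\ell+1}}=c\omega\in\K}$, contrary to the choice of~$\ell$. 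By Lemma~\ref{llang} applied with ${m=p}$ (note that when ${p=2}$ the condition ${4\mid m}$ is vacuous), the polynomial ${X^p-\eta}$ is irreducible over~$\K$, so ${[L:\K]=p}$. Because ${\mu_p\subseteq\K}$, the roots of ${X^p-\eta}$ are ${\zeta_{p^{\ell+1}}\omega}$ with ${\omega\in\mu_p}$, all of which lie in~$L$; hence~$L$ is the splitting field of ${X^p-\eta}$ over~$\K$, so ${L/\K}$ is Galois, and being of prime degree it is cyclic. Fix a generator~$\sigma$ of ${\Gal(L/\K)}$.

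Next, let ${\beta\in L}$ satisfy ${\beta^p=\alpha}$. Since ${\sigma(\beta)^p=\sigma(\alpha)=\alpha=\beta^p}$ and ${\beta\ne 0}$, we get ${\sigma(\beta)=\omega_1\beta}$ for some ${\omega_1\in\mu_p}$; likewise ${\sigma(\zeta_{p^{\ell+1}})=\omega_2\zeta_{p^{\ell+1}}}$ for some ${\omega_2\in\mu_p}$, and ${\omega_2\ne 1}$ because ${\sigma\ne\id}$ while ${L=\K(\zeta_{p^{\ell+1}})}$. Thus~$\omega_2$ generates~$\mu_p$, so ${\omega_1=\omega_2^k}$ for some ${k\in\Z}$. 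Setting ${\gamma:=\beta\zeta_{p^{\ell+1}}^{-k}\in L}$, a direct computation gives ${\sigma(\gamma)=\omega_1\omega_2^{-k}\gamma=\gamma}$, so~$\gamma$ is fixed by~$\sigma$, hence ${\gamma\in L^{\langle\sigma\rangle}=\K}$. Therefore ${\alpha=\beta^p=\gamma^p\zeta_{p^{\ell+1}}^{pk}=\gamma^p\eta^k}$, and since ${\gamma^p\in\K^p}$ and ${\eta^k\in\mu_{p^\ell}}$, we conclude ${\alpha\in\mu_{p^\ell}\K^p}$.

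I do not expect a serious obstacle: the argument is a standard Kummer/descent computation once the cyclic degree-$p$ tower is in place. The only points demanding care are verifying the hypotheses of Lemma~\ref{llang} (in particular isolating the ${p=2}$ case, where the ${-4\K^4}$ condition does not arise), and remembering that the fixed primitive roots of unity ${\zeta_{p^j}}$ need not be chosen compatibly — which is precisely why I route everything through ${\eta=\zeta_{p^{\ell+1}}^p}$ rather than assuming ${\zeta_{p^{\ell+1}}^p=\zeta_{p^\ell}}$.
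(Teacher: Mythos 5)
Your proof is correct. The key verifications hold: $\mu_p\subseteq\K$ since $\ell\ge 1$; $\eta=\zeta_{p^{\ell+1}}^p\in\K$ is not a $p$-th power in $\K$ (else $\zeta_{p^{\ell+1}}=c\omega\in\K$); Lemma~\ref{llang} applies with $m=p$ (the $-4\K^4$ clause being vacuous since $4\nmid p$), giving $[L:\K]=p$; $L/\K$ is cyclic since $\mu_p\subseteq\K$ makes $L$ a splitting field of $X^p-\eta$; and the twist $\gamma=\beta\zeta_{p^{\ell+1}}^{-k}$ is visibly $\sigma$-fixed, whence $\alpha=\gamma^p\eta^k\in\K^p\mu_{p^\ell}$. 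This is a genuinely different route from the paper's. The paper's proof invokes the Kummer correspondence (Theorem~8.1 of \cite[Chapter~VI]{La02}) directly: it forms the subgroup $B$ of $\K^\times$ generated by $\alpha$, $\mu_{p^\ell}$ and $(\K^\times)^p$, observes $\K(B^{1/p})=\K(\zeta_{p^{\ell+1}})$, and reads off $[B:(\K^\times)^p]=p$, forcing $B=\mu_{p^\ell}(\K^\times)^p$. That argument is shorter but leans on the full bijection (and implicitly on $[\K(\zeta_{p^{\ell+1}}):\K]=p$, which it does not spell out). Yours trades the black-box Kummer theorem for an explicit Hilbert-90-style descent, and along the way makes the degree computation explicit; the two approaches prove essentially the same field-theoretic fact from opposite ends. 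One small caveat: Lemma~\ref{llang} is stated in the paper only for characteristic $0$, whereas Lemma~\ref{lkum} permits any characteristic prime to $p$; the underlying irreducibility criterion of Lang does hold in that generality, so this is only a matter of citation, but worth flagging if you want your write-up to cover the stated hypotheses literally.
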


\begin{proof}
We may assume that ${\alpha\ne 0}$, since there is nothing to prove otherwise.
We use Kummer's Theory, as in Theorem~8.1 from \cite[Chapter~VI]{La02}. Let~$B$ be the subgroup of the multiplicative group~$\K^\times$ generated by~$\alpha$,~$\mu_{p^\ell}$ and $(\K^\times)^p$. By the hypothesis,  ${\K(B^{1/p})=\K(\zeta_{p^{\ell+1}})}$. 
The above-mentioned theorem implies that 
$$
[B:(\K^\times)^p]=[\K(B^{1/p}):\K]=[\K(\zeta_{p^{\ell+1}}):\K]=p. 
$$
Since ${[\mu_{p^\ell}(\K^\times)^p:(\K^\times)^p]=p}$, this proves that ${B=\mu_{p^\ell}(\K^\times)^p}$, which exactly means that ${\alpha \in \mu_{p^\ell}(\K^\times)^p}$. 
\end{proof}

\begin{proof}[Proof of Proposition~\ref{prcheb}]
If the statement holds true with~$\K$ replaced by a bigger field, then it is true for~$\K$. Indeed, let~$\K'$ be a finite extension of~$\K$. If~$\gerp$ is a $\K$-prime, and~$\gerp'$ is a $\K'$-prime above~$\gerp$, then for any ${\alpha\in \K^\times}$ we have ${\ord_{\gerp'}(\alpha)=\ord_\gerp(\alpha)}$; so, it suffices to to show that the set $\PP_{\K'}(\alpha,r)$ is of positive lower density. Thus, we may assume that ${\sqrt{-1}\in \K}$.

Let~$m$ be the order of the group of roots of unity~$\mu_\K$. Condition~\eqref{ezep} may be re-stated as ${p\mid r\Rightarrow p\mid m}$. 

If the conclusion of the proposition holds true with~$r$ replaced a multiple of~$r$, then it holds for~$r$. Hence we may replace~$r$ by   ${\prod_{p\mid r}p^{\max\{\nu_p(r), \nu_p(m)+1\}}}$  and assume in the sequel that 
\begin{equation}
\label{emsqmidr}
\nu_p(r)>\nu_p(m) \qquad (p\mid m); 
\end{equation}
in particular, 
${p\mid r\Leftrightarrow p\mid m}$.

Next, we may replace~$\alpha$ by $\alpha\xi$, where ${\xi\in \K}$ is a root of unity. Indeed, if ${r\mid \ord_\gerp(\alpha)}$, then  ${\nu_p\bigl(\ord_\gerp(\xi)\bigr)< \nu_p\bigl(\ord_\gerp(\alpha)\bigr)}$ for every ${p\mid \ord_\gerp(\xi)}$, by~\eqref{emsqmidr}. Hence ${\ord_\gerp(\alpha\xi)=\ord_\gerp(\alpha)}$.

Finally, we may assume that  
\begin{equation}
\label{eanmukp}
\alpha \notin \mu_\K\K^p  \qquad (p\mid r). 
\end{equation} 
Indeed, call~$\alpha$ with property~\eqref{eanmukp} \textit{$r$-reduced}. Since~$\alpha$ is not a root of unity, we have  ${\alpha\xi=\beta^N}$, where~$\beta$ is $r$-reduced,~$N$ is composed of primes dividing~$r$ and~$\xi$ is a root of unity.  Now, if ${Nr\mid \ord_\gerp(\beta)}$ then ${r\mid \ord_\gerp(\alpha\xi)=\ord_\gerp(\alpha)}$. Hence we may assume~\eqref{eanmukp}, replacing~$\alpha$ by~$\beta$ and~$r$ by $Nr$. 

Denote ${\L:=\K(\zeta_r)}$. Properties~\eqref{ezep} and~\eqref{emsqmidr} imply that 
$$
\Gal(\L/\K) \cong \prod_{p\mid m}\Z/p^{\nu_p(r/m)}\Z . 
$$ 
In particular, for every ${p\mid r}$, the extension~$\L$ has exactly one subfield of degree~$p$ over~$\K$; precisely, it is ${\K(\zeta_{p^{\ell_p+1}})}$, where ${\ell_p:=\nu_p(m)}$. 

Pick some value of the $r^\tho$ root $\alpha^{1/r}$. This would define the roots $\alpha^{1/p}$ for every ${p\mid r}$. 
We claim that ${\alpha^{1/p}\notin \L}$ for any ${p\mid r}$. Indeed, in the opposite case, $\K(\alpha^{1/p})$ would be a subfield of~$\L$ of degree~$p$ over~$\K$. This would imply ${\alpha^{1/p}\in \K(\zeta_{p^{\ell_p+1}})}$, which, by Lemma~\ref{lkum}, implies that ${\alpha \in \mu_{p^{\ell_p}}\K^p}$, contradicting~\eqref{eanmukp}.

Lemma~\ref{llang}, together with Remark~\ref{rlang},  implies now  that ${\M:=\L(\alpha^{1/r})}$ is an extension of~$\L$ of degree~$r$. Moreover, since ${\zeta_r\in \L}$, extension $\M/\L$ is cyclic, and the map 
\begin{equation}
\label{eisom}
\sigma\mapsto\xi_\sigma :\frac{(\alpha^{1/r})^\sigma}{\alpha^{1/r}}
\end{equation}
defines an isomorphism of the groups ${H:=\Gal(\M/\L)}$ and~$\mu_r$. 

Since ${\L=\K(\zeta_r)}$, extension $\M/\K$ is Galois, and we denote  ${G:=\Gal(\M/\K)}$.  Then~$H$ is the subgroup of~$G$ fixing~$\zeta_r$.

Let~$\gerp$ be a $\K$-prime  not dividing~$r$ and satisfying ${\nu_\gerp(\alpha)=0}$. In particular,~$\gerp$ does not ramify in~$\M$. For an $\M$-prime~$\Gerp$  above~$\gerp$, we  denote by ${\phi_\Gerp\in G}$ the Frobenius of~$\Gerp$ above~$\K$. Recall that~$\phi_\Gerp$ is the element of~$G$ with the following property: let 
${\OO_\Gerp:=\{x\in \M:\nu_\Gerp(x)\ge 0\}}$
be the local ring of~$\Gerp$; then for every ${x\in \OO_\Gerp}$ we have ${x^{\NN_\gerp}\equiv x^{\phi_\Gerp}\pmod\Gerp}$ (as before, $\NN(\cdot)$ denotes the absolute norm). 

Next, let ${\bigl((\M/\K)\big/\gerp\bigr):=\{\phi_\Gerp: \Gerp\mid \gerp\}}$ be the Artin symbol of~$\gerp$. 
Note that it is a full conjugacy class in~$G$. 
Denote by~$\Sigma$ the subset of~$H$ consisting of the elements  of exact order~$r$. In symbols:
${\Sigma:=\{\sigma \in H: H=\langle\sigma\rangle \} 
}$.
Let~$\gerp$ be such that ${\bigl((\M/\K)\big/\gerp\bigr)\subset \Sigma}$. By the Theorem of Chebotaryov, the set of such~$\gerp$ is of positive density\footnote{Actually, it is of density $\#\Sigma/\#G$, but the exact value of the density is not relevant to us.}, so we only have to prove that ${r\mid \ord_\gerp(\alpha)}$.

We are going to prove that ${r\mid \NN\gerp-1}$, but~$\overline{\alpha}$ is not a $p^\tho$ power in $(\OO_\gerp/\gerp)^\times$ for any ${p\mid r}$. Since ${\NN\gerp-1}$ is the order of the cyclic group $(\OO_\gerp/\gerp)^\times$, the order of~$\overline{\alpha}$ in this group must be divisible by~$r$, as wanted.

As before, let~$\Gerp$  be an $\M$-prime above~$\gerp$.  Since ${\phi_\Gerp\in \Sigma\subset   H}$, we have ${\phi_\Gerp(\zeta_r)=\zeta_r}$, which means that ${\zeta_r^{\NN\gerp}\equiv \zeta_r\pmod\Gerp}$. Since ${\Gerp\nmid r}$, this implies that ${\zeta_r^{\NN\gerp}= \zeta_r}$, that is, ${r\mid \NN\gerp-1}$. 

We are left with proving that~$\overline{\alpha}$ is not a $p^\tho$ power in $(\OO_\gerp/\gerp)^\times$ for any prime~$p$ dividing~$r$. Fix such~$p$. Since ${\phi_\Gerp\in \Sigma}$, it is of exact order~$r$ in~$H$. Hence ${\xi:=\xi_{\phi_\Gerp}}$ (as defined in~\eqref{eisom}) is a primitive $r^\tho$ root of unity, and ${\eta:=\xi^{r/p}}$ is a primitive $p^\tho$ root of unity.

Applying~\eqref{eisom} with ${\sigma=\phi_\Gerp}$, we obtain ${\alpha^{(\NN\gerp-1)/r}\equiv \xi\pmod\Gerp}$. Raising this congruence to the power $r/p$, we obtain ${\alpha^{(\NN\gerp-1)/p}\equiv \eta\pmod\Gerp}$. Since both sides of this congruence belong to~$\K$, it is actually a congruence modulo~$\gerp$, and it implies the identity ${\overline{\alpha}^{(\NN\gerp-1)/p}= \overline{\eta}}$ in the group $(\OO_\gerp/\gerp)^\times$. If~$\overline{\alpha}$ is a $p^\tho$ power in $(\OO_\gerp/\gerp)^\times$ then ${\overline{\alpha}^{(\NN\gerp-1)/p}=1}$, which is impossible because~$\overline{\eta}$ is a primitive $p^\tho$ root of unity. This completes the proof of the proposition. 
\end{proof}

\subsection{Proof of Theorem~\ref{thquest}}

Besides Proposition~\ref{prcheb}, the proof of Theorem~\ref{thquest} relies on the following lemma. It  is very classical and goes back to the work of Lucas~\cite{lu78} or even earlier. Still, we give a proof for convenience.

\begin{lemma}
\label{lold}
Let~$\K$ be a number field,~$\gerp$ a prime of~$\K$ with underlying rational prime~$p$ and ${e:=\nu_\gerp(p)}$ the ramification index. Let ${\theta\in \K}$ be such that ${\nu_\gerp(\theta-1)>0}$ and ${n\in \Z}$. Then we have the following.

\begin{enumerate}
\item
\label{incoprime}
If ${p\nmid n}$ then ${\nu_\gerp(\theta^n-1)=\nu_\gerp(\theta-1)}$. 

\item
\label{igen}
In general, ${\nu_\gerp(\theta^n-1) \ge \nu_\gerp(\theta-1)+\nu_p(n)\min \{e, (p-1)\nu_\gerp(\theta-1)\}}$. 

\item
\label{ieq}
If ${\nu_\gerp(\theta-1) >e/(p-1)}$ then ${\nu_\gerp(\theta^n-1) = \nu_\gerp(\theta-1)+e\nu_p(n)}$. 

\end{enumerate}
\end{lemma}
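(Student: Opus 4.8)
The plan is to reduce everything to the single elementary step $\theta\mapsto\theta^p$, carried out via the binomial expansion of $(1+\varpi)^p$, and then to iterate. Throughout I write $\theta=1+\varpi$ and set $v:=\nu_\gerp(\varpi)=\nu_\gerp(\theta-1)>0$; since $\theta$ is a $\gerp$-adic unit we have $\nu_\gerp(\theta^{-n}-1)=\nu_\gerp(\theta^n-1)$ and $\nu_p(-n)=\nu_p(n)$, so in all three items I may assume $n\ge1$ (the case $n=0$ being immediate). For item~\ref{incoprime} I expand
$$
\theta^n-1=\sum_{k=1}^{n}\binom nk\varpi^k=n\varpi+\sum_{k=2}^{n}\binom nk\varpi^k .
$$
Every $\binom nk$ is a rational integer, hence has non-negative $\gerp$-valuation, so the $k$-th term with $k\ge2$ has valuation at least $kv>v$, whereas the $k=1$ term has valuation $\nu_\gerp(n)+v=v$ because $p\nmid n$. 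The smallest valuation is attained uniquely, giving $\nu_\gerp(\theta^n-1)=v$.

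Next I isolate the key one-step estimate. Let $\phi=1+\varpi\in\K$ with $w:=\nu_\gerp(\varpi)>0$. For $1\le k\le p-1$ the prime $p$ divides $\binom pk$, so $\nu_\gerp\bigl(\binom pk\bigr)\ge e$; hence in
$$
\phi^p-1=p\varpi+\sum_{k=2}^{p-1}\binom pk\varpi^k+\varpi^p
$$
the term $p\varpi$ has valuation $e+w$, each middle term has valuation $\ge e+kw\ge e+2w$, and $\varpi^p$ has valuation $pw=w+(p-1)w$. Therefore
$$
\nu_\gerp(\phi^p-1)\ \ge\ w+\min\{e,\,(p-1)w\},
$$
and this holds with \emph{equality} whenever $(p-1)w>e$: in that case the middle terms beat $p\varpi$ by $(k-1)w>0$ and $\varpi^p$ beats it by $(p-1)w-e>0$, so $p\varpi$ is the unique term of smallest valuation.

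To deduce items~\ref{igen} and~\ref{ieq} I write $n=p^a m$ with $a=\nu_p(n)$ and $p\nmid m$. By item~\ref{incoprime} applied to $\theta^m$ we have $\nu_\gerp(\theta^m-1)=v$, and setting $v_j:=\nu_\gerp(\theta^{p^jm}-1)$ we have $v_0=v$ and $\theta^{p^jm}=(\theta^{p^{j-1}m})^p$. The one-step estimate gives $v_j\ge v_{j-1}+\min\{e,(p-1)v_{j-1}\}$; since $(v_j)$ is non-decreasing with $v_0=v$, the increment is always at least $\min\{e,(p-1)v\}$, and induction on $j$ yields $v_a\ge v+a\min\{e,(p-1)v\}$, which is item~\ref{igen}. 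If moreover $v>e/(p-1)$, i.e.\ $(p-1)v>e$, then $(p-1)v_{j-1}>e$ for every $j$, so the one-step estimate holds with equality at each stage, $v_j=v_{j-1}+e$, whence $v_a=v+ae=\nu_\gerp(\theta-1)+e\,\nu_p(n)$; this is item~\ref{ieq}.

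The only point requiring genuine care is the equality clause of the one-step estimate used for item~\ref{ieq}: one must check both that the hypothesis $(p-1)w>e$ forces $p\varpi$ to be the strictly dominant (smallest-valuation) term in the expansion of $\phi^p-1$, and that this strict inequality is preserved under $w\mapsto w+e$ (indeed $(p-1)(w+e)>(p-1)w>e$), so that the induction does not stall. Everything else is routine bookkeeping with $\gerp$-adic valuations of rational integers.
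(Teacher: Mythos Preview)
Your proof is correct and follows essentially the same strategy as the paper: both reduce items~\ref{igen} and~\ref{ieq} to iterating the single step $\theta\mapsto\theta^p$, analyzed via the binomial expansion of $(1+\varpi)^p$, and both note that the strict inequality $(p-1)w>e$ is preserved along the iteration. The only cosmetic difference is in item~\ref{incoprime}, where the paper factors $\theta^n-1=(\theta-1)(\theta^{n-1}+\cdots+1)$ and observes that the second factor is congruent to~$n$ modulo~$\gerp$, while you expand $(1+\varpi)^n$ directly; both arguments are equally elementary.
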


\begin{proof}
Item~\ref{incoprime} must be proved only for  ${n>0}$ and ${n=-1}$. If ${n>0}$ then 
$$
\theta^n-1=(\theta-1)(\theta^{n-1}+\cdots+1).
$$
Since ${\theta\equiv 1 \pmod\gerp}$ in the local ring $\OO_\gerp$,
we have 
$$
\theta^{n-1}+\cdots+1\equiv n \pmod\gerp. 
$$ 
In particular, ${\nu_\gerp(\theta^{n-1}+\cdots+1)=0}$ if ${p\nmid n}$. This proves item~\ref{incoprime} for ${n>0}$. As for ${n=-1}$, it is obvious that ${\nu_\gerp(\theta^{-1}-1)=\nu_\gerp(\theta-1)}$. 

Due to item~\ref{incoprime}, in items~\ref{igen} and~\ref{ieq} we may assume that ${n=p^k}$. Moreover, using induction in~$k$, we may assume that ${n=p}$. Write ${\theta=1+\gamma}$. Then
$$
\theta^p-1=p\gamma+\sum_{\ell=2}^{p-1}\binom p\ell \gamma^\ell + \gamma^p. 
$$  
Each of the terms inside the sum has $\gerp$-adic valuation strictly bigger than ${\nu_\gerp(p\gamma)=\nu_\gerp(\gamma)+e}$. Hence 
\begin{equation}
\label{einequality}
\nu_\gerp(\theta^p-1) \ge \nu_\gerp(\gamma) + \min\{e, (p-1)\nu_\gerp(\gamma)\} 
\end{equation}
which proves item~\ref{igen}. Finally, when ${\nu_\gerp(\gamma) >e/(p-1)}$, inequality~\eqref{einequality} becomes equality, and the minimum on the right is~$e$, which proves item~\ref{ieq}. 
\end{proof}

Now we are ready to prove Theorem~\ref{thquest}. 
\begin{proof}[Proof of Theorem~\ref{thquest}]
By shifting, we may  assume that ${a=0}$. Thus, we have to prove that, for infinitely many $\K$-primes~$\gerp$ the following holds: there exist~$Q$,~$a'$ and~$\tau$ as in the statement of the theorem such that 
\begin{equation}
\label{enugerp}
\nu_\gerp(U(n))=\nu_\gerp(n)+\tau \qquad \text{when} \quad n\equiv a' \pmod Q. 
\end{equation}

If the statement holds true with~$\K$ replaced by a bigger number field, then it holds for~$\K$. Hence we may assume that the roots of the characteristic polynomial of~$U$ belong to~$\K$. 
Multiplying $U(n)$ by $\beta\theta^n$ with suitable ${\beta, \theta \in \K^\times}$, we may assume that either ${U(n)=n-\beta}$, where ${\beta\in \K}$, or ${U(n)=\eta\lambda^n -1}$, where ${\eta, \lambda \in K^\times}$. If ${U(n)=n-\beta}$ and~$0$ is a twisted zero of~$U$, then ${\beta=0}$ and ${U(n)=n}$, so there is nothing to prove. 

Now let us assume that ${U(n)=\eta\lambda^n -1}$. Note that~$\lambda$ is not a root of unity, because~$U$ is non-degenerate. Since~$0$ is a twisted zero of~$U$, we have ${\eta\lambda^0-\xi=0}$ for some root of unity~$\xi$. Hence~$\eta$ is a root of unity. Denote by~$r$ its order. 


Let $\PP'_\K(\lambda, r)$ be the subset of $\PP_\K(\lambda, r)$, consisting of ${\gerp\in \PP_\K(\lambda, r)}$, unramified and of degree~$1$ over~$\Q$, and with underlying prime ${\NN\gerp>2}$.
Proposition~\ref{prcheb} implies that the set $\PP_\K(\lambda, r)$ is of positive lower density. Hence so is $\PP'_\K(\lambda, r)$:  this is because the set of unramified primes of degree~$1$ is of density~$1$. We claim that for every ${\gerp\in \PP'_\K(\lambda, r)}$, there exist~$Q$,~$a'$  and~$\tau$ as in~\eqref{eqapkt} 
such that~\eqref{enugerp} holds, and ${p\nmid Q}$. 

Thus, fix ${\gerp\in \PP'_\K(\lambda, r)}$. Since~$\gerp$ is of degree~$1$, the cyclic group ${(\OO_\gerp/\gerp)^\times}$ is  of order ${p-1}$, where ${p:=\NN\gerp}$ is an odd prime number.  Since ${r\mid \ord_\gerp(\lambda)}$, the subgroup $\langle\overline{\lambda}\rangle$ of ${(\OO_\gerp/\gerp)^\times}$ contains~$\overline{\eta}$. Hence there exists ${s\in \{1, \ldots, p-1\}}$ such that ${\eta\lambda^s\equiv 1 \pmod\gerp}$.
Lemma~\ref{lold} implies that 
$$
\nu_\gerp\bigl((\eta\lambda^s)^m-1\bigr) =\nu_p(m) +\nu_\gerp(\xi\lambda^s-1) \qquad (m\in \Z). 
$$
Now note that, when ${m\equiv 1 \pmod r}$, we have ${(\xi\lambda^s)^m-1=U(sm)}$. Hence~\eqref{enugerp} holds with 
$$
Q:=rs, \qquad a':=s, \qquad \tau:=\nu_\gerp(\xi\lambda^s-1). 
$$
Note also that ${p\nmid Q}$ because ${r\mid p-1}$ and ${1\le s\le p-1}$. 
The theorem is proved. 
\end{proof}

\begin{remark}
\begin{enumerate}
\item
Our argument can be illustrated with the LRS from Example~\ref{extwonplusone}. In that case
$$
\K=\Q, \qquad \lambda=2, \qquad \eta =-1, \qquad r=2, \qquad s=\frac{p-1}{2},  
$$
and the set $\PP'_\Q(2, 2)$ contains all the primes satisfying ${p\equiv \pm3\pmod8}$ (and some other primes as well).

\item
This argument does not extend to rational~$a$, because we can no longer do the shifting and assume that ${a=0}$. We do not know whether Theorem~\ref{thquest} can be extended to twisted rational zeros.  

\item
The non-degeneracy hypothesis cannot be dropped. Indeed, consider
$$
U(n):= \zeta_3^n+\overline{\zeta_3}^n =
\begin{cases}
2, & \text{if $3\mid n$}, \\
-1, & \text{if $3\nmid n$}. 
\end{cases}
$$
Then~$0$ is a twisted zero of~$U$, but ${\nu_p(U(n))=0}$ for all~$n$ and all ${p\ne 2}$. 
\end{enumerate}
\end{remark}

\subsection{Concluding remarks}

As we already indicated in the introduction, the answer to Question~\ref{quhard} is negative for LRS of order~$3$ or higher. Here is an example of order~$3$, but one can easily construct similar examples of any order.

\begin{example}
Let~$p$ be a prime number. 
The LRS ${U(n):= 8^n+2^n+1}$  has a twisted zero at~$0$, because ${1+\zeta_3+\overline{\zeta_3}=0}$. However, there does not exist a sequence $(n_k)$ such that 
$$
\nu_p(U(n_k))\to +\infty, \qquad \nu_p(n_k) \to +\infty. 
$$
Indeed, let $(n_k)$ be such a sequence. Let~$\K$ be the splitting field of the polynomial ${X^3+X+1}$ and~$\gerp$ a prime of~$\K$ above~$p$. Then, replacing $(n_k)$ by a subsequence, we have 
$$
\nu_\gerp(2^{n_k}-\alpha)\to +\infty, \qquad \nu_\gerp(n_k) \to +\infty 
$$
for some root~$\alpha$ of this polynomial. Theorem~\ref{thtwratk} implies that~$0$ is a twisted zero of the LRS ${2^n-\alpha}$, a contradiction, because~$\alpha$ is not a root of unity. 
\end{example}

To conclude, let us ask one more question. Theorem~\ref{thtwrat} deals with just one individual prime number. What happens if a sequence $(n_k)$ satisfying~\eqref{enk} can be found for all but finitely many primes? One may expect that in this case~$a$ is a genuine zero of~$U$, not merely a TRZ. 

\begin{question}
Let~$U$ be an LRS with values in a number field~$\K$, and ${a\in \Q}$. 
Assume that for every $\K$-prime~$\gerp$, with finitely many exceptions, there exist a sequence of integers $(n_k)$ (depending on~$\gerp$)
such that
$$
\nu_\gerp(U(n_k))\to +\infty, \qquad \nu_\gerp(n_k-a) \to +\infty. 
$$
Does it imply that either~$a$ a trivial TRZ (as defined in Subsection~\ref{ssq}) or ${a\in \Z}$  and ${U(a)=0}$? 
\end{question}

Probably, ``all but finitely many primes'' can be replaced by ``primes from a set of density~$1$''. But, as Example~\ref{extwonplusone} shows, it is not enough to assume that this holds for infinitely many primes, or even for primes from a set of positive lower density. 

\subsection*{Acknowledgments} 
Yuri Bilu and Florian Luca were supported by the ANR project JINVARIANT. James Worrell was supported by UKRI Fellowship EP/X033813/1. 

While working on this project, Yuri Bilu enjoyed hospitality and support of MPIM Bonn, where he was visiting in June 2023.

Florian Luca worked on this paper in 2023 during research visits at the MPI-SWS, Saarbrücken, Germany and the Stellenbosch Institute for Advanced Studies, Stellenbosch, South Africa. He thanks these institutions for their hospitality and support.

Jo\"el Ouaknine is also affiliated with Keble College,
Oxford as \url{emmy.network} Fellow, and supported by DFG grant 389792660 as
part of TRR 248 (see \url{https://perspicuous-computing.science}).

{\footnotesize

\bibliographystyle{amsplain}
\bibliography{twisted}

}
\end{document}